\DeclareMathOperator{\dom}{dom}
\DeclareMathOperator{\cof}{cof}
\DeclareMathOperator{\len}{len}
\DeclareMathOperator{\crit}{crit}
\DeclareMathOperator{\rank}{rank}
\DeclareMathOperator{\cf}{cf}
\DeclareMathOperator{\hull}{Hull}
\DeclareMathOperator{\Ult}{Ult}
\newcommand{\ZF}{{\rm ZF}\xspace}
\newcommand{\ZFC}{{\rm ZFC}\xspace}
\newtheorem{theorem}{Theorem}
\newaliascnt{example}{theorem}
\newaliascnt{fact}{theorem}
\newtheorem{fact}[fact]{Fact}
\newaliascnt{corollary}{theorem}
\newtheorem{corollary}[corollary]{Corollary}
\newaliascnt{lemma}{theorem}
\newtheorem{lemma}[lemma]{Lemma}
\newaliascnt{claim}{theorem}
\newtheorem{claim}[lemma]{Claim}
\newtheorem{prop}[theorem]{Proposition}
\theoremstyle{definition}
\newaliascnt{definition}{theorem}
\newtheorem{question}{Question}
\newtheorem*{theorem*}{Theorem}
\newtheorem*{remark}{Remark}
\newtheorem*{example*}{Example}
\newtheorem*{definition*}{Definition}
\DeclareMathOperator{\ran}{ran}
\DeclareMathOperator{\ot}{ot}
\DeclareMathOperator{\ord}{Ord}
\DeclareMathOperator{\ns}{NS}
\newcommand{\p}{\mathcal{P}}
\newcommand{\la}{\langle}
\newcommand{\ra}{\rangle}
\title{Embeddings into outer models}
\author{Monroe Eskew}
\address{Universität Wien \\
Institut für Mathematik \\
Kurt Gödel Research Center \\
Kolingasse 14-16 \\
1090 Wien \\
AUSTRIA}
\email[M. Eskew]{monroe.eskew@univie.ac.at}
\email[S.-D. Friedman]{sdf@logic.univie.ac.at}
\author{Sy-David Friedman}
\thanks{The authors wish to acknowledge the support of the Austrian Science Fund (FWF) through Research Project P28420.}
\begin{document}
\maketitle

\begin{abstract}
We explore the possibilities for elementary embeddings $j : M \to N$, where $M$ and $N$ are models of \ZFC with the same ordinals, $M \subseteq N$, and $N$ has access to large pieces of $j$.  We construct commuting systems of such maps between countable transitive models that are isomorphic to various canonical linear and partial orders, including the real line $\mathbb R$.
\end{abstract}

\section{Introduction}

The notion of an elementary embedding between transitive models of set theory is central to the investigation of principles of high consistency strength.  A typical situation involves an elementary $j : M \to N$, where $M$ and $N$ are models of \ZFC sharing the same ordinals, and $j$ is not the identity map.  Such a map cannot be the identity on ordinals, 
and the least ordinal moved is called the \emph{critical point}.  Postulating more agreement between $M$ and $N$, and with the ``real'' universe $V$, usually results in a stronger hypothesis.  Kunen established an upper bound to this collection of ideas, showing that there is no nontrivial elementary $j : V \to V$.

The question is how exactly to formalize Kunen's result, which is not on its face equivalent to a first-order statement.  Indeed, it cannot be about an \emph{arbitrary} elementary embedding which may exist in some outer universe, as the relatively low-strength assumption of $0^\sharp$ gives a nontrivial elementary $j : L \to L$.  Furthermore, requiring $j$ to be definable from parameters in $V$ yields an impossibility using more elementary considerations and not requiring the Axiom of Choice \cite{MR1780073}.  One way to express the content of Kunen's Theorem is to require that $V$ satisfies the expansion of \ZFC to include Replacement and Comprehension for formulas that use $j$ as a predicate.  Another way is to localize its combinatorial content, which shows that there cannot be a cardinal $\lambda$ and a nontrivial elementary $j : V_{\lambda+2} \to V_{\lambda+2}$.

We begin with an elaboration on Kunen's theorem in a slightly different direction.  Using the main idea of Woodin's proof of Kunen's theorem (see \cite{MR1994835}, Theorem 23.12), we show that whenever one model of \ZFC is embedded into another with the same ordinals, there are some general constraints on what the models can know about the embedding and about each other.

\begin{theorem}
\label{kunengen}
Suppose $j : M \to N$ is a nontrivial elementary embedding between transitive models of $\ZFC$ with the same class of ordinals $\Omega$.  Then at least one of the following holds:
\begin{enumerate}
\item\label{repl} The \emph{critical sequence} $\la j^n(\crit(j)) : n \in \omega \ra$ is cofinal in $\Omega$.
\item\label{comp} For some $\alpha \in \Omega$, $j[\alpha] \notin N$.
\item\label{cof} For some $\alpha \in \Omega$, $\alpha$ is regular in $M$ and singular in $N$.
\end{enumerate}
\end{theorem}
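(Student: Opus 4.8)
The plan is to adapt Woodin's proof of Kunen's theorem, reading the three alternatives as the three ways in which that adaptation can break down. Write $\kappa=\crit(j)$, $\kappa_n=j^n(\kappa)$, and $\lambda=\sup_n\kappa_n$, and assume \eqref{repl} fails, so that $\lambda$ is a genuine ordinal in $\Omega$. First I would record the ingredients of the Kunen--Woodin argument in this setting: that $\kappa$ is regular in $M$ (if $c:\mu\to\kappa$ were cofinal in $M$ with $\mu<\kappa$, then $j(c)$ would be a cofinal map $\mu\to j(\kappa)$ agreeing with $j\circ c$, whose range is bounded by $\kappa$, absurd); that for $\beta<\lambda$ one has $j(\beta)<\lambda$, so $j[\lambda]\subseteq\lambda$ is cofinal of order type $\lambda$; and that $\kappa\notin j[\lambda]$, since $j(\beta)=\kappa$ is impossible.

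Next I would bring in the failure of \eqref{comp}. Assuming $j[\alpha]\in N$ for every ordinal $\alpha$, the set $j[\lambda]$ lies in $N$, and since $j\restriction\lambda$ is just the order isomorphism of $\lambda$ onto $j[\lambda]$, the model $N$ can reconstruct $j\restriction\lambda$; iterating this function from $\kappa$ inside $N$ recovers the whole critical sequence, so $\cf^N(\lambda)=\omega$ and $\lambda$ is singular in $N$. At this point there is an immediate dividend: if $\lambda$ is regular in $M$, then $\lambda$ itself witnesses \eqref{cof} and we are done. So the real work is confined to the case where $\lambda$ is singular in $M$; set $\mu=\cf^M(\lambda)<\lambda$ and fix a cofinal $c:\mu\to\lambda$ in $M$.

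The heart of the matter is a cofinality dichotomy obtained by pushing $c$ forward. If $\mu<\kappa$, then $j$ is continuous at $\lambda$, so $j(\lambda)=\sup j[\lambda]=\lambda$, and comparing cofinalities gives $\omega=\cf^N(\lambda)=\cf^N(j(\lambda))=j(\cf^M(\lambda))=\mu$; if instead $\mu\ge\kappa$, then $j[\mu]$ is bounded below the $N$-regular cardinal $j(\mu)$ and the standard discontinuity computation forces $j(\lambda)>\lambda$. Having arranged $j(\lambda)=\lambda$ with $\lambda$ a cardinal of $N$, I would then run the Erd\H{o}s--Hajnal part of Woodin's argument: fix in $M$ a function $f:[\lambda]^\omega\to\lambda$ with $f''[X]^\omega=\lambda$ for every $X$ of size $\lambda$, so that by elementarity $j(f)$ is such a function in $N$. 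Using $j\restriction\lambda\in N$ and the fact that any $\omega$-set $t$ has $|t|<\kappa$, hence $j(t)=j[t]$, the set $j[\lambda]$ should be closed under $j(f)$ via $j(f)(j[t])=j(f(t))\in j[\lambda]$; as $|j[\lambda]|=\lambda$, the defining property of $j(f)$ then forces $j[\lambda]=\lambda$, contradicting $\kappa\notin j[\lambda]$. This contradiction is what rules out the remaining configuration and delivers \eqref{cof}.

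I expect the main obstacle to be exactly the cofinality--closure bookkeeping in the last paragraph: verifying that $\lambda$ is a cardinal of $N$ (so that the defining property of $f$ transfers), that the discontinuous case $\mu\ge\kappa$ is genuinely incompatible with the failure of \eqref{cof}, and---most delicately---that $j[\lambda]$ really is closed under $j(f)$, i.e. that every $\omega$-subset of $j[\lambda]$ computed in $N$ is of the form $j[t]$ for some $t\in M$. This last point is where $N$ having extra countable subsets would sabotage the closure, and it is precisely here that the interplay of cofinalities must be used to force a regular cardinal of $M$ to become singular in $N$; pinning down that implication is the crux on which the whole theorem turns.
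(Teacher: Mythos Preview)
Your plan follows Kunen's original $\omega$-J\'onsson argument, whereas the paper uses Woodin's stationary-set proof, and the gap you flag at the end is exactly why the latter is needed here. You correctly isolate the crux: to run the J\'onsson argument you must show that every $t\in([j[\lambda]]^\omega)^N$ is of the form $j[s]$ for some $s\in([\lambda]^\omega)^M$. But the failure of \eqref{cof} only buys you agreement of \emph{cofinalities}: if $\cf^N(\alpha)=\omega$ then $\mu=\cf^M(\alpha)$ is $M$-regular, hence $N$-regular, and the $M$-cofinal map $\mu\to\alpha$ lies in $N$, so $\mu=\cf^N(\mu)=\cf^N(\alpha)=\omega$. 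It does \emph{not} give you that $M$ and $N$ have the same countable subsets of $\lambda$. In Kunen's original setting the domain of $j$ is $V$ itself, so pulling back $t$ along $j$ automatically lands in the domain; here the domain $M$ is the smaller model, and $N$ may well have countable subsets of $\lambda$ (even new reals) that are not in $M$, so the closure of $j[\lambda]$ under $j(f)$ simply does not follow.

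The paper's argument sidesteps this by working at $(\lambda^+)^M$, which the failure of \eqref{cof} forces to equal $(\lambda^+)^N$ and to be fixed by $j$. One splits $\lambda^+\cap\cof(\omega)$ into $\kappa$ many disjoint stationary sets $\langle S_\alpha:\alpha<\kappa\rangle$ in $M$, pushes forward to $\langle S'_\alpha:\alpha<j(\kappa)\rangle$ in $N$, and picks $\alpha\in S'_\kappa$ in the club $C=\{\beta<\lambda^+:j[\beta]\subseteq\beta\}\in N$. Now $\cf^N(\alpha)=\omega$, so by the cofinality transfer above $\cf^M(\alpha)=\omega$; a \emph{single} cofinal $\omega$-sequence from $M$ then witnesses $j(\alpha)=\alpha$, and elementarity places $\alpha$ in some $S_\beta$ with $\beta<\kappa$, hence in $S'_\beta\cap S'_\kappa=\emptyset$. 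The point is that this needs only one $\omega$-sequence lying in $M$, which cofinality agreement provides, rather than all of $N$'s countable subsets lying in $M$, which it does not.
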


\begin{proof}
We will suppose that all of the alternatives fail and derive a contradiction.  Let $\lambda \in \Omega$ be the supremum of the critical sequence.  Since $j[\lambda] \in N$, the critical sequence is a member of $N$, and thus $N \models \cf(\lambda) = \omega$.  Since $\cf(\lambda)^M$ is regular in $M$, it is also regular in $N$, and so $M \models \cf(\lambda) = \omega$ as well.  Thus $j(\lambda) = \lambda$.  Since $(\lambda^+)^M$ is regular in $N$, we must have that $(\lambda^+)^M$ is also a fixed point.

Let $\kappa = \crit(j)$, and choose in $M$ a pairwise-disjoint sequence of stationary subsets of $\lambda^+ \cap \cof(\omega)$, $\la S_\alpha : \alpha < \kappa \ra$.  Let $\la S'_\alpha : \alpha < j(\kappa) \ra = j(\la S_\alpha : \alpha < \kappa \ra)$.  Let $C = \{ \alpha < \lambda^+ : j[\alpha] \subseteq \alpha \}$.   $C$ is a member of $N$ and a club in $\lambda^+$, so let $\alpha \in C \cap S'_\kappa$.  Since $N \models \cf(\alpha) = \omega$, there is a sequence $\la \alpha_n : n < \omega \ra \in M$ cofinal in $\alpha$.  Since $\alpha$ is closed under $j$, $j(\alpha) = \sup_{n<\omega} j(\alpha_n) = \alpha$.  By elementarity, there is some $\beta < \kappa$ such that $\alpha \in S_\beta$.  But then $\alpha \in S'_\beta \cap S'_\kappa = \emptyset$, a contradiction.
\end{proof}

Let us list some key examples of embeddings between transitive models of \ZFC with the same ordinals, in which \emph{exactly one} of the above alternatives holds:
\begin{enumerate}
\item Axiom I3 asserts the existence of a nontrivial elementary embedding $j : V_\lambda \to V_\lambda$, where $\lambda$ is a limit ordinal.  (\ref{comp}) and (\ref{cof}) must both fail for such $j$, so (\ref{repl}) must hold.
\item Suppose $\mathcal U$ is a countably complete ultrafilter over some set.  If $j : V \to M \cong \Ult(V,\mathcal U)$ is the ultrapower embedding, then (\ref{repl}) fails by the Replacement axiom, and (\ref{cof}) fails since $M \subseteq V$.  Thus (\ref{comp}) holds.
\item Situations in which (\ref{repl}) and (\ref{comp}) both fail will be considered in Sections \ref{fixedpts} and \ref{amcat} of the present paper.
\end{enumerate}

We will say that an elementary embedding $j : M \to N$ is \emph{(target-)amenable} if $j[x] \in N$ for all $x \in M$, or in other words that alternative (\ref{comp}) fails.  For \ZFC models, this is equivalent to saying that $j \cap \alpha^2 \in N$ for all ordinals $\alpha \in N$.  It is easy to see that $M \subseteq N$ in such a situation.  It is an immediate consequence of Theorem \ref{kunengen} that if $M$ and $N$ are transitive models of \ZFC with the same ordinals $\Omega$, and $j : M \to N$ is an amenable elementary embedding such that its critical sequence is \emph{not} cofinal in $\Omega$, then $M$ and $N$ do not agree on cofinalities.  The proof actually shows that $M$ and $N$ cannot agree on both the class of cardinals and the class $\{ \alpha : \cf(\alpha) = \omega \}$.

When the domain and target of an elementary embedding are the same, alternative (\ref{cof}) cannot hold.  If $j : M \to M$ is definable from parameters in some larger universe $V$, then alternatives (\ref{repl}) and (\ref{comp}) show that the closure of $M$, as measured by $V$, must run out at some point.  In contrast to the I3 examples that achieve amenability at the cost of countable closure, we show it is possible for such $M$ to be as near to $V$ as desired, and find in this motif a characterization of supercompactness:

\begin{theorem}
\label{agreement}
Suppose $\kappa \leq \lambda$ are regular.  $\kappa$ is $\lambda$-supercompact if and only if there is a $\lambda$-closed transitive class $M$ and a nontrivial elementary $j : M \to M$ with critical point $\kappa$. 
\end{theorem}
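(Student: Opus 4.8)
The plan is to prove each implication in turn; the forward direction—building the self-embedding from a supercompactness measure—is where the real work lies. For the implication from a self-embedding to supercompactness, suppose $M$ is $\lambda$-closed and $j : M \to M$ is elementary with $\crit(j) = \kappa$. First I would arrange $j(\kappa) > \lambda$. Because ${}^\lambda M \subseteq M$, the critical sequence $\la \kappa_n : n < \omega \ra$ (with $\kappa_n = j^n(\kappa)$) lies in $M$; if its supremum $\nu$ were at most $\lambda$, then $\nu$ would be a $j$-fixed point with $\cf^M(\nu) = \omega$, $M$ would compute $\nu^+$ and the cofinalities below it correctly, and $j(\nu^+) = \nu^+$, so the stationary-partition argument from the proof of Theorem \ref{kunengen}—run at $\nu^+$ with a sequence $\la S_\alpha : \alpha < \kappa \ra \in M$ of disjoint stationary subsets of $\nu^+ \cap \cof(\omega)$ and a fixed point $\alpha \in S'_\kappa$ lying on the club of closure points of $j$—would yield a contradiction. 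Hence $\nu > \lambda$, and replacing $j$ by a finite iterate $j^n$ (still elementary, still with critical point $\kappa$) we may assume $j(\kappa) > \lambda$.

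With $j(\kappa) > \lambda$ secured, $j[\lambda]$ is a subset of $M$ of size $\lambda$, hence belongs to $M$ by $\lambda$-closure, and $j[\lambda] \in j(\mathcal{P}_\kappa(\lambda)) = (\mathcal{P}_{j(\kappa)}(j(\lambda)))^M$. I would then set $U = \{ X \in \mathcal{P}(\mathcal{P}_\kappa(\lambda))^M : j[\lambda] \in j(X) \}$ and verify in the usual way that $U$ is a $\kappa$-complete, normal, fine ultrafilter on $\mathcal{P}_\kappa(\lambda)$ as computed in $M$. To conclude that $U$ witnesses supercompactness in $V$, I would check that $\mathcal{P}(\mathcal{P}_\kappa(\lambda))^M = \mathcal{P}(\mathcal{P}_\kappa(\lambda))^V$: $\lambda$-closure gives $\mathcal{P}(\lambda)^M = \mathcal{P}(\lambda)^V$ and the same $<\kappa$-sequences from $\lambda$, while $\kappa$ being $\lambda$-supercompact inside $M$ forces $\lambda^{<\kappa} = \lambda$ there (Solovay), which then transfers to $V$, so that $\mathcal{P}_\kappa(\lambda)$ has size $\lambda$ and its subsets are coded by subsets of $\lambda$. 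Hence $U \in V$ measures every subset of $\mathcal{P}_\kappa(\lambda)$ and $\kappa$ is $\lambda$-supercompact.

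For the converse, fix a normal fine measure $U_0$ on $\mathcal{P}_\kappa(\lambda)$ and let $i_{0,1} : V \to N_1 = \Ult(V, U_0)$, so $\crit(i_{0,1}) = \kappa$, $i_{0,1}(\kappa) > \lambda$, and ${}^\lambda N_1 \subseteq N_1$. Iterate linearly, $N_{n+1} = \Ult(N_n, U_n)$ with $U_{n+1} = i_{n,n+1}(U_n)$, producing a decreasing chain of inner models $V = N_0 \supseteq N_1 \supseteq N_2 \supseteq \cdots$, and put $M = \bigcap_{n<\omega} N_n$. Each finite iterate $N_n$ is $\lambda$-closed in $V$ (by induction, since $\Ult(N_n, U_n)$ is internally $i_{0,n}(\lambda)$-closed and $N_n$ is externally $\lambda$-closed), so any $\lambda$-sequence into $M$ lies in every $N_n$ and hence in $M$; thus $M$ is $\lambda$-closed. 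The self-embedding is the shift: since $i_{0,1}$ sends the iteration of $V$ by $U_0$ to the iteration of $N_1$ by $U_1$—namely the tail $N_1 \supseteq N_2 \supseteq \cdots$, which has the same intersection—one obtains $i_{0,1}(M) = \bigcap_{n \ge 1} N_n = M$. Therefore $j := i_{0,1} \restriction M$ maps $M$ into $M$; it is elementary from $M$ to $M$ (reading $\phi$-truth in $M$ off the elementarity of $i_{0,1} : V \to N_1$ together with $i_{0,1}(M) = M$), it is nontrivial, and $\crit(j) = \kappa$.

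The crux is the converse, specifically reconciling $\lambda$-closure with the existence of a nontrivial self-map. The direct limit of the iteration carries an obvious shift but is never even $\omega$-closed—it regards the supremum of the critical sequence as regular—so closure is precisely what forces the passage to the intersection $\bigcap_n N_n$ of the finite iterates. The delicate points, which I expect to dominate the argument, are then (i) checking that $M = \bigcap_n N_n$ is a model of \ZFC, Replacement in particular, for which I would exploit that each $N_m$ defines the tail $\la N_n : n \ge m \ra$ internally together with the $\lambda$-closure of $M$; and (ii) confirming that $i_{0,1}$ genuinely commutes with the intersection, so that the shift restricts to an elementary embedding of $M$ into itself. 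The measure-theoretic half is routine by comparison.
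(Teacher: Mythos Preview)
Your forward direction matches the paper's, except for one gap: you invoke Solovay's theorem ``inside $M$'' to obtain $\lambda^{<\kappa}=\lambda$, but the hypothesis gives no reason for $M$ to believe $\kappa$ is $\lambda$-supercompact---the embedding $j$ is external to $M$, and the derived filter $U$ need not lie in $M$. The paper closes this by proving $\lambda^{<\kappa}=\lambda$ directly via a club-guessing argument using $j$ itself (essentially Solovay's proof run with $j$ in place of a measure); this repair is minor.

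Your converse takes a genuinely different route. The paper iterates to the direct limit $M_\omega$, which carries the shift map for free but has lost all closure, and then restores $\lambda$-closure by forcing with supercompact Prikry over $M_\omega$: a canonical generic $G$ is found in $V$, $M_\omega[G]$ is shown to be $\lambda$-closed, and the shift $j_{\mathcal U}\restriction M_\omega$ extends to $M_\omega[G]$. You instead propose $M=\bigcap_n N_n$, which is $\lambda$-closed immediately and on which $i_{0,1}$ visibly restricts to a self-map. In the measurable case $\kappa=\lambda$ these two constructions give the same model---this is the Bukovsky--Dehornoy theorem that the intersection of the finite iterates is exactly the Prikry extension of the direct limit---and one expects the same here. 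But your acknowledged ``delicate point (i)'', that $\bigcap_n N_n$ satisfies \ZFC, is the whole difficulty: the tail-definability idea you sketch does handle Replacement and Powerset, but Choice is not obvious from it, and the standard way to secure it is precisely through the Prikry-generic analysis the paper carries out. So your approach is correct in outline and arguably more conceptual, but it does not avoid the work; it repackages it.
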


Finally, we consider amenable embeddings for which alternative (\ref{repl}) does not hold.  Because of the importance of regular fixed points in the proof of Theorem \ref{kunengen}, we first explore the possible behaviors of cardinal fixed points of amenable embeddings, showing that essentially anything can happen.  Then we explore the possible structural configurations of commuting systems of amenable embeddings.

Given an ordinal $\delta$, let $\mathcal E_\delta$ be the category whose objects are all transitive models of \ZFC of height $\delta$ and whose arrows are all elementary embeddings between these models.  Let $\mathcal A_\delta$ be the subcategory where we take only amenable embeddings as arrows.  (It is easy to see that amenable embeddings are closed under composition.) 

Partial orders are naturally represented as categories where between any two objects there is at most one arrow, which we take to point from the lesser object to the greater.  We would like to know what kinds of partial orders can appear in a reasonable way as subcategories of an $\mathcal A_\delta$.  Let us say that a subcategory $\mathcal D$ of a category $\mathcal C$ is \emph{honest} if whenever $x$ and $y$ are objects of $\mathcal D$ and there is an arrow $f : x \to y$ in $\mathcal C$, then there is one in $\mathcal D$ as well.

\begin{theorem}
If there is a transitive model of \ZFC plus sufficiently many large cardinals,
\footnote{For (3), we use a set of measurable cardinals of ordertype $\omega+1$.  The others use hypotheses weaker than one measurable cardinal.} 
then there is a countable ordinal $\delta$ such that $\mathcal A_\delta$ contains honest subcategories isomorphic to:
\begin{enumerate}
\item The real numbers.
\item The complete binary tree of any countable height.
\item The reverse-ordered complete binary tree of height $\omega$.
\item An Aronszajn tree.
\item Every countable pseudotree.
\end{enumerate}
\end{theorem}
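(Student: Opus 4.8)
The plan is to build all the required models as transitive collapses of Skolem hulls of \emph{indiscernibles} sitting inside one fixed master structure, so that elementary embeddings arise uniformly from order-preserving maps between the index sets. First I would fix, from the large-cardinal hypothesis, a countable transitive model $\mathfrak{M}$ of \ZFC together with a linearly ordered set $I \subseteq \mathfrak{M}$ of Silver-style indiscernibles generating $\mathfrak{M}$ under definable Skolem functions, arranged so that (i) any order-isomorphism between subsets of $I$ that preserves the pattern of Skolem terms lifts to an elementary embedding between the corresponding hulls, and (ii) any sufficiently ``tail-like'' $A \subseteq I$ yields a hull whose transitive collapse has one fixed height $\delta$. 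Since only a countable $\mathfrak{M}$ is needed, \Lowenheim--Skolem and condensation let this run off a hypothesis strictly below a measurable — an Erd\H{o}s-type partition cardinal supplying $\delta+\omega$ indiscernibles suffices — which accounts for the footnote's strength claim for items (1), (2), (4), (5).

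The point that makes all the heights agree is exactly alternative (\ref{cof}) of Theorem~\ref{kunengen}: the embeddings produced have bounded critical sequence, are amenable, and preserve the common height $\delta$, so the models must \emph{disagree on cofinalities} in the ordinal gaps between the indiscernibles. I would verify amenability directly — for $x$ in a hull, $j[x]$ is recoverable in the target from the finitely many image indiscernibles naming $x$ — and verify that every admissible hull collapses to height $\delta$ by arranging each index set $A$, together with its distribution of Skolem terms, to be order-isomorphic to a fixed template tail of $I$.

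With this machinery each order is realized by a suitable choice of index sets. For the real line (1), take $I$ densely ordered without endpoints and set $M_r = \hull(I \cap [r,\infty))$ for $r \in \mathbb{R}$; every tail is order-isomorphic to the whole, so all $M_r$ have height $\delta$, and for $r \le s$ the inclusion of index tails induces an amenable embedding $M_s \to M_r$, giving a copy of the real order (up to reversal, which is harmless). For the complete binary tree (2) and the countable pseudotrees (5) I would index hulls by the nodes, respectively the points, of the order, using indiscernibles arranged in the corresponding branching pattern so that the predecessor relation becomes index-inclusion; the linear-predecessor clause defining a pseudotree matches the fact that any chain of amenable embeddings into a fixed model has linearly ordered, increasing critical points. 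The Aronszajn tree (4) I would take to be \emph{special}, hence order-embeddable into $\mathbb{Q}$, and realize it by restricting the real-line construction to a $\mathbb{Q}$-indexed Aronszajn subfamily.

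The genuinely different item is the reverse-ordered binary tree (3), where one must pass \emph{down} the tree, reversing the embeddings and forming common lower bounds of infinite descending chains; inclusions of indiscernibles cannot do this. Here I would invoke the stronger hypothesis of $\omega+1$ measurable cardinals $\langle \kappa_n : n \le \omega \rangle$: the $\kappa_n$ for $n < \omega$ give $\omega$ independent ultrapower directions to branch on, while iterating the top measure on $\kappa_\omega$ supplies a single model lying below all of them that glues the branches into the required limits, the commutation being precisely that of the corresponding iterated ultrapowers. I expect the main obstacle throughout to be \textbf{honesty}: one must classify \emph{all} amenable embeddings between any two constructed models and show none exist beyond those the order prescribes. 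This I would handle by a rigidity argument — any elementary embedding must carry indiscernibles to indiscernibles and so is determined by an order-and-type-preserving map of index sets — combined with Theorem~\ref{kunengen}, which rules out the degenerate alternatives (\ref{repl}) and (\ref{comp}) and pins down the critical-point behavior, forcing the admissible index maps to correspond exactly to the intended order relations.
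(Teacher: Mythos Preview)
Your approach has a fundamental gap that undermines all of items (1), (2), (4), (5) simultaneously. You rely on the fact that order-isomorphic index sets $A,B\subseteq I$ yield hulls with the \emph{same} transitive collapse---this is what you invoke to ensure all $M_r$ have the same height $\delta$. But that very fact means they are literally the same model: if every tail $I\cap[r,\infty)$ is order-isomorphic to the whole, then every $M_r$ is the same transitive set $M$, and the inclusions between hulls collapse to \emph{self}-embeddings $M\to M$. In the category $\mathcal A_\delta$ you then have a single object with a monoid of arrows, not a partial order isomorphic to $\mathbb R$. The same problem recurs for the binary tree and the pseudotrees: you cannot distinguish nodes by index sets that are pairwise order-isomorphic, and if you make them non-isomorphic you lose control of the height. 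Your Aronszajn idea fails for an additional reason: a special tree has an order-preserving map to $\mathbb Q$, but it is not injective (the tree is uncountable), and in any case restricting a \emph{linear} subcategory cannot produce incomparable nodes.

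The paper's route is entirely different. Distinct models are produced not by varying indiscernible sets but by \emph{forcing outward} with the stationary tower, giving a genuine tower $M_0\subsetneq M_1\subsetneq\cdots$ of amenably embedded models of the same height; Lemma~\ref{omegalimit} controls direct limits so that chains of any countable length exist. Honesty for trees is obtained not by rigidity of indiscernibles but by an explicit obstruction: along different branches one collapses different blocks of a fixed sequence $\langle\delta_n\rangle$, so incomparable nodes cannot even sit in a common ZF model. Density (for $\mathbb R$ and for the universal pseudotree) comes from running these constructions \emph{inside a countable ill-founded model} $N$ whose ``$\omega_1$'' is a dense linear order, and then reading off the resulting system externally. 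Item (3) likewise goes through an ill-founded model: one applies the collapsing-inward construction of Theorem~\ref{wftrees} to a sequential tree $N$ believes has large well-founded rank, then extracts an honest binary subtree from its ill-founded part. Your iterated-ultrapower sketch for (3) does not explain how to amalgamate an infinite antichain below a single node while keeping the height fixed and the subcategory honest.
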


A pseudotree is a partial order that is linear below any given element, which generalizes both linear orders and trees.  These have been considered by several authors, for example in 
\cite{
MR2195726,
MR1173142,
MR0498152}.
In order to show the last item, we develop the model theory of pseudotrees.  We show that there is a countable pseudotree that has the same kind of universal property that the rationals have with respect to linear orders: It is characterized up to isomorphism by some first-order axioms, and every other countable pseudotree appears as a substructure.  We prove that for suitable $\delta$, the category $\mathcal A_\delta$ contains a copy of this universal countable pseudotree.

We also rule out some kinds of subcategories.  For example, if $\delta$ is countable, then $\mathcal A_\delta$ cannot contain a copy of $\omega_1$ or a Suslin tree.  There are many natural questions about the possible structure of these categories, and we list some of them at the end.

\section{Self-embeddings of highly closed classes}
\label{self}

In this section, we will prove Theorem \ref{agreement}.  First let us show that if $\lambda$ is regular, $M$ is a $\lambda$-closed transitive class, and $j : M \to M$ is a nontrivial elementary embedding with critical point $\kappa$,
then $\kappa$ is $\lambda$-supercompact.  First note that we may assume $j(\kappa) > \lambda$:  The proof of Theorem \ref{kunengen} shows that the critical sequence eventually must overtake $\lambda$.  For if not, then $\lambda \geq \eta^+$, where $\eta = \sup_{n<\omega} j^n(\kappa)$, and we can derive a contradiction from the assumption that $j[\eta^+] \in M$.  So composing $j$ with itself finitely many times yields an embedding that sends $\kappa$ above $\lambda$.

Next we claim that $\lambda^{<\kappa} = \lambda$, using a well-known argument (see \cite{MR2160657}).  Let $\vec C = \la C_\alpha : \alpha < \lambda \ra$ be such that $C_\alpha$ is a club in $\alpha$ of ordertype $\cf(\alpha)$.  Since $j[\lambda] \in M$ and $j(\lambda)$ is regular in $M$, $\gamma := \sup j[\lambda] < j(\lambda)$.  Let $C^* = j(\vec C)(\gamma)$.  Let $D = j^{-1}[C^*]$.  Since $j[\lambda]$ is ${<}\kappa$-closed, $|D| = \lambda$.  If $x \in [D]^{<\kappa}$, then $j(x) = j[x]$, and by elementarity, $x \subseteq C_\alpha$ for some $\alpha < \lambda$ such that $\cf(\alpha)<\kappa$.  Since $\kappa$ is inaccessible, $|\p(C_\alpha)| < \kappa$ when $|C_\alpha|<\kappa$.  Thus $\lambda^{<\kappa} \leq \lambda \cdot \kappa = \lambda$.

Thus, all subsets of $\p_\kappa\lambda$ are in $M$.  From $j$ we may define a $\lambda$-supercompactness measure in the usual way: $\mathcal U = \{ X \subseteq \p_\kappa\lambda : j[\lambda] \in j(X) \}$.

For the other direction, we use an iterated ultrapower.  Let $\mathcal U$ be a normal, fine, $\kappa$-complete ultrafilter on $\p_\kappa\lambda$.  Let $V = M_0$ and for $n<\omega$, let $j_{n,n+1} : M_n \to M_{n+1} = \Ult(M_n,j_{0,n}(\mathcal U))$ be the ultrapower embedding, and let $j_{m,n+1} = j_{n,n+1} \circ j_{m,n}$ for $m < n$.  Let $M_\omega$ the direct limit, and for $n<\omega$, let $j_{n,\omega}$ be the direct limit embedding.  Note that each $M_n$ is $\lambda$-closed, but the limit $M_\omega$ is not even countably closed.  $j_{0,\omega}(\kappa) = \sup_{n<\omega} j_{0,n}(\kappa)$, yet this ordinal is inaccessible in $M_\omega$.

To construct the desired model $M$, we find a generic for a Prikry forcing over $M_\omega$, which will restore $\lambda$-closure when adjoined.  
The sequences of classes $\la M_n : n < \omega \ra$ and embeddings $\la j_{m,n} : m < n < \omega \ra$ are definable in $V$ from $\mathcal U$.  Applying $j_{\mathcal U}$ to the sequences yields $\la M_n : 1 \leq n < \omega \ra$ and $\la j_{m,n} : 1\leq m < n < \omega \ra$, which has the same direct limit, $M_\omega$.  For any formula $\varphi(v_0,\dots,v_n)$ and parameters $a_0,\dots,a_n \in M_\omega$, $\varphi^{M_\omega}(a_0,\dots,a_n) \Leftrightarrow  \varphi^{M_\omega}(j_{\mathcal U}(a_0),\dots,j_{\mathcal U}(a_n))$.  Hence, $j_{\mathcal U} \ M_\omega$ is an elementary embedding into $M_\omega$.

Let us define the Prikry forcing $\mathbb P_{\mathcal U}$, which is standard.  Conditions take the form $\la x_0,\dots,x_n,A \ra$, where:
\begin{enumerate}
\item Each $x_i \in \p_\kappa\lambda$, and $\kappa_i := x_i \cap \kappa$ is inaccessible.
\item $x_i \subseteq x_{i+1}$, and $|x_i| < \kappa_{i+1}$.
\item $A \in \mathcal U$.
\end{enumerate}
Suppose $p = \la x_0,\dots,x_n,A \ra$ and $q =  \la x'_0,\dots,x'_m,B \ra$.  We say $q \leq p$ when:
\begin{enumerate}
\item $m \geq n$, and for $i \leq n$, $x_i = x'_i$.
\item For $n < i \leq m$, $x'_i \in A$.
\item $B \subseteq A$.
\end{enumerate}

Proof of the following can be found in \cite{MR2768695}:
\begin{lemma}
$\mathbb P_{\mathcal U}$ adds no bounded subsets of $\kappa$, collapses $\lambda$ to $\kappa$, and is $\lambda^+$-c.c.
\end{lemma}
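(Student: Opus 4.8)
The plan is to establish the three assertions in increasing order of difficulty, isolating the Prikry property as the technical heart. First I would dispose of the $\lambda^+$-c.c., which is purely a counting argument. Any two conditions sharing the same stem $\langle x_0,\dots,x_n\rangle$ are compatible: their measure-one parts $A$ and $B$ meet in a set $A\cap B\in\mathcal U$ by $\kappa$-completeness, and $\langle x_0,\dots,x_n,A\cap B\rangle$ is a common extension. We have already shown $\lambda^{<\kappa}=\lambda$, so $|\mathcal P_\kappa\lambda|=\lambda$ and there are only $\lambda$-many finite stems. Hence any family of $\lambda^+$ conditions contains two with a common stem, so every antichain has size at most $\lambda$.

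Next I would verify the collapse of $\lambda$ to $\kappa$. Let $\langle x_n:n<\omega\rangle$ be the generic sequence read off from the stems of the conditions in the generic filter. A density argument using the \emph{fineness} of $\mathcal U$ shows $\bigcup_n x_n=\lambda$: given $\alpha<\lambda$ and a condition with measure-one part $A$, the set $\{y:\alpha\in y\}$ lies in $\mathcal U$, so intersecting it with $A$ permits extending the stem by a block containing $\alpha$. Since each $|x_n|<\kappa$, fixing in the extension a bijection of $x_n$ with its ordertype below $\kappa$ yields a surjection $\kappa\times\omega\to\bigcup_n x_n=\lambda$, whence $|\lambda|\le\kappa$. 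Once $\kappa$ and $\lambda^+$ are known to be preserved (from the other two items), this gives $|\lambda|=\kappa$.

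The substantive claim is that no bounded subset of $\kappa$ is added, and the key is the \textbf{Prikry property}: for every statement $\varphi$ of the forcing language and every condition $p$ there is a \emph{direct extension} $p^*\le^*p$ (shrinking the measure-one part without lengthening the stem) deciding $\varphi$. Granting this, the no-new-bounded-subsets conclusion is quick. Given a name $\dot X$ for a subset of some $\gamma<\kappa$ and a condition $p$, for each $\alpha<\gamma$ choose a direct extension $p_\alpha\le^*p$ deciding ``$\check\alpha\in\dot X$''. All $p_\alpha$ share the stem of $p$, so intersecting their measure-one parts — a family of fewer than $\kappa$ sets, hence still in $\mathcal U$ by $\kappa$-completeness — produces one direct extension $p^*\le^*p$ deciding membership for every $\alpha<\gamma$. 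Then $p^*$ forces $\dot X$ to equal a ground-model set.

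The hard part, and the step most in need of care, is the Prikry property itself. I would prove it by the standard combinatorial argument adapted to $\mathcal P_\kappa\lambda$: fixing the stem of $p$, color each admissible increasing tuple $\langle y_1,\dots,y_k\rangle$ of potential added blocks according to whether some measure-one set placed above $\langle\text{stem},y_1,\dots,y_k\rangle$ forces $\varphi$, forces $\neg\varphi$, or neither, and then invoke the Rowbottom-type partition property of the normal fine ultrafilter $\mathcal U$ to thin $A$ to a homogeneous measure-one set. Diagonalizing over the length $k$, using $\kappa$-completeness to keep the successive intersections in $\mathcal U$, one shows that on the resulting direct extension the decision cannot genuinely depend on the added points, so $p^*$ already decides $\varphi$. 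The delicate points are verifying that the supercompactness measure carries the requisite partition property for increasing tuples, and that the diagonal shrinking respects the size constraints $|x_i|<\kappa_{i+1}$ built into the conditions; this is precisely where normality is used.
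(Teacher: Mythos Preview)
The paper does not actually prove this lemma; it simply states ``Proof of the following can be found in \cite{MR2768695}'' and moves on. So there is no in-paper argument to compare against. Your proposal is the standard proof of the three properties of supercompact Prikry forcing and is correct in outline: compatibility of same-stem conditions plus the count $|\mathcal P_\kappa\lambda|=\lambda^{<\kappa}=\lambda$ for the chain condition, fineness of $\mathcal U$ for the covering of $\lambda$, and the Prikry property via the Rowbottom partition theorem for normal fine measures on $\mathcal P_\kappa\lambda$ for the preservation of bounded subsets. This is essentially what one finds in the cited handbook chapter, so your write-up would serve as a faithful expansion of what the paper leaves to the reference.
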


We now argue that there exists an $M_\omega$-generic filter $G \subseteq j_{0,\omega}(\mathbb P_{\mathcal U})$ in $V$.  Furthermore, $M_\omega[G]$ is $\lambda$-closed.  The idea for $\kappa = \lambda$ is due to Dehornoy \cite{MR514228}.

For $n < \omega$, let $z_n = j_{n,\omega}[j_{0,n}(\lambda)]$.  Since $\crit(j_{n+1,\omega}) = j_{0,n+1}(\kappa) > j_{0,n}(\lambda)$, $z_n = j_{n+1,\omega}(j_{n,n+1}[j_{0,n}(\lambda)])\in \p_{j_{0,\omega}(\kappa)} (j_{0,\omega}(\lambda))^{M_\omega}$.  We define $G$ as the collection of $\la x_0,\dots,x_n,A \ra \in j_{0,\omega}(\mathbb P_{\mathcal U})$ such that $\la x_0,\dots,x_n\ra$ is an initial segment of $\la z_i : i < \omega \ra$ and $\{ z_i : n < i < \omega \} \subseteq A$.

\begin{lemma}$G$ is generic over $M_\omega$.

\begin{proof}
We use an analogue of Rowbottom's Theorem \cite{MR0323572}:  If $F : [\p_\kappa\lambda]^{<\omega} \to 2$, then there is a set $A \in \mathcal U$ and a sequence $r \in {^\omega}2$ such that whenever $\la x_i,\dots,x_n \ra \subseteq A$ is such that $x_i \subseteq x_{i+1}$ and $|x_i| < x_{i+1} \cap \kappa \in \kappa$, then $F(\{x_0,\dots,x_{n-1}\}) = r(n)$.  Let $D$ be a dense open subset of $j_{0,\omega}(\mathbb P_{\mathcal U})$ in $M_\omega$.  For each $s$ such that $s^\frown \la j_{0,\omega}(\p_\kappa\lambda)\ra \in j_{0,\omega}(\mathbb P_{\mathcal U})$, let $F_s : [\p_\kappa\lambda]^{<\omega} \to 2$ be defined by $F_s(t) = 1$ if there is $B_{s,t}$ such that $s^\frown t^\frown \la B_{s,t} \ra \in D$, and $F_s(t) = 0$ otherwise.  If $F_s(t) = 0$, define $B_{s,t} = j_{0,\omega}(\p_\kappa\lambda)$.  For each $s$, let $B_s$ be the diagonal intersection $\Delta_t B_{s,t} \in j_{0,\omega}(\mathcal U)$.  For each $s$, let $A_s$ and $r_s$ be given by the analogue of Rowbottom's Theorem.  Let $A^*$ be the diagonal intersection $\Delta_s A_s \cap B_s$.

We claim that for each $s$, whether a condition $s^\frown t ^\frown \la B\ra \leq s^\frown \la A^* \ra$ is in $D$ depends only on the length of $t$.  Let $s$ be given and let $t,t'$ be of the same length $n$, such that $s^\frown t ^\frown \la B\ra$ and $s^\frown t' \!^\frown \la B' \ra$ are both $\leq s^\frown \la A^* \ra$.  Then $t,t' \subseteq A_s$ and $B,B' \subseteq B_s$, so $B \subseteq B_{s,t}$ and $B' \subseteq B_{s,t'}$.  Thus $F_s(t) = F_s(t') = r_s(n)$, so either both $s^\frown t ^\frown \la B\ra$ and $s^\frown t' \!^\frown \la B' \ra$ are in $D$, or they are both are not in $D$.

Now if $D$ is a dense open subset of $j_{0,\omega}(\mathbb P_{\mathcal U})$ in $M_\omega$, there is some $n <\omega$ and $\bar D \in M_n$ such that $j_{n,\omega}(\bar D) = D$.  Let $A^*$ be as above, and let $m \geq n$ be such that $A^* = j_{m,\omega}(\bar A^*)$ for some $\bar A^* \in j_{0,m}(\mathcal U)$.  Let $l <\omega$ be such that for all sequences $t$ of length $\geq l$ and all $B$ such that $\la z_0,\dots,z_{m-1} \ra ^\frown t ^\frown \la B \ra \leq \la z_0,\dots,z_{m-1} \ra ^\frown \la A^* \ra$, we have $\la z_0,\dots,z_{m-1} \ra ^\frown t ^\frown \la B \ra \in D$.  Since $z_k \in A^*$ for all $k \geq m$, $D \cap G \not= \emptyset$.
\end{proof}
\end{lemma}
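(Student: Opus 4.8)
The plan is to fix a dense open $D \subseteq j_{0,\omega}(\mathbb P_{\mathcal U})$ and show $D \cap G \neq \emptyset$, since meeting every dense open set suffices for genericity. Because every element of the direct limit $M_\omega$ is of the form $j_{n,\omega}(\bar a)$ for some $n < \omega$ and $\bar a \in M_n$, I would first reflect $D$ to a finite stage, writing $D = j_{n,\omega}(\bar D)$ with $\bar D \in M_n$ dense open in $j_{0,n}(\mathbb P_{\mathcal U})$. The goal is then to locate an initial segment of the intended sequence $\langle z_i : i < \omega\rangle$ and a tail set $A^*$ so that the resulting member of $G$ already lies in $D$.

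The combinatorial heart of the argument is a Rowbottom-style homogeneity theorem for the supercompact measure $\mathcal U$: for any coloring $F \colon [\mathcal P_\kappa\lambda]^{<\omega} \to 2$ there is a set $A \in \mathcal U$ and $r \in {}^\omega 2$ such that, whenever $\langle x_0, \dots, x_{n-1}\rangle \subseteq A$ is $\subseteq$-increasing with $|x_i| < x_{i+1}\cap\kappa \in \kappa$, the value $F(\{x_0,\dots,x_{n-1}\})$ depends only on $n$, equal to $r(n)$. Working inside $M_\omega$ with the image measure $j_{0,\omega}(\mathcal U)$, I would, for each admissible stem $s$ lying below $z_m$, color a finite $\subseteq$-increasing tuple $t$ by whether $s {}^\frown t$ admits a large set $B$ with $s {}^\frown t {}^\frown \langle B\rangle \in D$. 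Homogenizing yields, for each $s$, a measure-one $A_s$ on which membership of $s {}^\frown t {}^\frown \langle B\rangle$ in $D$ is governed by $|t|$ alone; the witnessing large sets $B$, which a priori depend on $t$, are absorbed by a diagonal intersection, and the dependence of $A_s$ on $s$ is then absorbed by a further diagonal intersection. This produces a single $A^* \in j_{0,\omega}(\mathcal U)$ that simultaneously decides all relevant stems.

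Finally, I would pin $A^*$ down at a finite stage, $A^* = j_{m,\omega}(\bar A^*)$ with $m \geq n$ and $\bar A^* \in j_{0,m}(\mathcal U)$, and invoke the seed property of the supercompact ultrapowers: the seed $j_{k,\omega}[j_{0,k}(\lambda)] = z_k$ belongs to the image of every $j_{0,k}(\mathcal U)$-large set, so $z_k \in A^* = j_{k,\omega}(j_{m,k}(\bar A^*))$ for all $k \geq m$. Since $D$ is dense below $\langle z_0, \dots, z_{m-1}\rangle {}^\frown \langle A^* \rangle$, some extension with stem drawn from $A^*$ lies in $D$; as $D$ is open, the homogeneous value at that length and all greater lengths is ``in $D$''. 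Taking a sufficiently long initial segment $\langle z_0, \dots, z_{l-1}\rangle$ of the generic sequence together with the tail set $A^*$ then produces a condition of $G$ lying in $D$.

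The main obstacle I expect is the second step: establishing the Rowbottom-type partition property in precisely the form needed for these ``fat'' tuples of $\mathcal P_\kappa\lambda$ subject to the growth constraint $|x_i| < x_{i+1}\cap\kappa$, and then nesting the two diagonal intersections so that one measure-one set $A^*$ decides every stem at once. Verifying that the particular sequence $\langle z_k \rangle$ arising from the iteration actually threads $A^*$ — that is, that the combinatorics of $\mathcal U$ and the geometry of the iterated ultrapower cooperate via the seed property — is the delicate point on which the whole argument turns.
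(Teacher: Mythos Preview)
Your proposal is correct and follows essentially the same route as the paper: the Rowbottom-type partition theorem for $\mathcal U$, the two nested diagonal intersections (over $t$ to absorb the witnessing sets $B_{s,t}$, then over $s$ to obtain a single $A^*$), reflection of $D$ and $A^*$ to finite stages, and the seed property giving $z_k \in A^*$ for $k \geq m$. The only cosmetic difference is that you reflect $D$ at the outset while the paper does so after constructing $A^*$; this is immaterial.
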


\begin{lemma}The map $j_{\mathcal U} \restriction M_\omega$ can be extended to a map $j : M_\omega[G] \to M_\omega[G]$.
\end{lemma}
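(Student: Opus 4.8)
The plan is to lift $s := j_{\mathcal U}\restriction M_\omega$ by the standard criterion: if $G^*$ is $M_\omega$-generic for $s(j_{0,\omega}(\mathbb P_{\mathcal U}))$ with $s[G]\subseteq G^*$, then $\tau^G \mapsto s(\tau)^{G^*}$ is a well-defined elementary embedding $M_\omega[G] \to M_\omega[G^*]$ extending $s$. Since the displayed biconditional says precisely that $s$ is an elementary self-map of $M_\omega$, it suffices to produce such a $G^*$ lying in $M_\omega[G]$ with $M_\omega[G^*] = M_\omega[G]$, so that the codomain is again $M_\omega[G]$.

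First I would pin down how $s$ acts on the relevant objects. Because $\mathbb P_{\mathcal U}\in V = M_0$ and the shift map satisfies $s\circ j_{0,\omega} = j_{0,\omega}$ (read off from $j_{1,\omega}\circ j_{0,1} = j_{0,\omega}$, since $j_{\mathcal U} = j_{0,1}$ sends the iteration to its own tail), the forcing itself is fixed: $s(j_{0,\omega}(\mathbb P_{\mathcal U})) = j_{0,\omega}(\mathbb P_{\mathcal U})$. Next I would compute $s$ on the Prikry points. Writing $z_n = F_n(j_{0,n}(\lambda))$ with $F_n(\mu) = j_{n,\omega}[\mu]$, elementarity of $j_{0,1}$ turns the class function $j_{n,\omega}$ into the tail map $j_{n+1,\omega}$ and sends $j_{0,n}(\lambda)$ to $j_{1,n+1}(j_{0,1}(\lambda)) = j_{0,n+1}(\lambda)$, whence $s(z_n) = j_{n+1,\omega}[j_{0,n+1}(\lambda)] = z_{n+1}$. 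Thus $s$ shifts the generic sequence one step to the left, while fixing the poset.

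This dictates the choice of $G^*$: I would let it be the filter determined by the shifted sequence $\langle z_{n+1} : n<\omega\rangle$, i.e. all $\langle x_0,\dots,x_n,A\rangle$ with $\langle x_0,\dots,x_n\rangle$ an initial segment of $\langle z_{i+1}\rangle$ and $\{z_{i+1} : i > n\}\subseteq A$. Since $z_0\in M_\omega$, the two sequences are interdefinable over $M_\omega$, so $M_\omega[G^*] = M_\omega[G]$, and genericity of $G^*$ over $M_\omega$ follows from the previous lemma's argument essentially verbatim, as that argument used only that cofinitely many points lie in any prescribed set of $j_{0,\omega}(\mathcal U)$ — a property inherited by any tail. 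The inclusion $s[G]\subseteq G^*$ then comes from the shift computation: for $p = \langle z_0,\dots,z_n,A\rangle\in G$ we get $s(p) = \langle z_1,\dots,z_{n+1},s(A)\rangle$, whose stem is the correct initial segment of $\langle z_{i+1}\rangle$; and for $i > n+1$ we have $z_i = s(z_{i-1})\in s(A)$ precisely because $z_{i-1}\in A$, which holds since $i-1 > n$. Hence $s(p)\in G^*$, and the lifting criterion yields the desired extension.

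The main obstacle is the bookkeeping in these two elementarity computations — that $j_{\mathcal U}$ fixes $j_{0,\omega}(\mathbb P_{\mathcal U})$ yet shifts $\langle z_n\rangle$ — since both require reading the invariance of $M_\omega$ under $j_{\mathcal U}$ correctly at the level of direct-limit representatives. Once these are established, the genericity of $G^*$ and the inclusion $s[G]\subseteq G^*$ are routine, and the standard lifting criterion produces an elementary $j : M_\omega[G] \to M_\omega[G]$ with $j\restriction M_\omega = j_{\mathcal U}\restriction M_\omega$.
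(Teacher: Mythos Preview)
Your proposal is correct and follows essentially the same route as the paper: compute that $j_{\mathcal U}$ shifts the Prikry sequence via $j_{\mathcal U}(z_n)=z_{n+1}$, let $G'$ (your $G^*$) be the filter generated by the tail $\langle z_n : n\geq 1\rangle$, note $j_{\mathcal U}[G]\subseteq G'$ and $M_\omega[G']=M_\omega[G]$, and apply Silver's lifting criterion. You add the explicit observation that $s$ fixes the poset $j_{0,\omega}(\mathbb P_{\mathcal U})$ and spell out the verification of $s[G]\subseteq G^*$ in more detail than the paper does, but the argument is the same.
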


\begin{proof}
Note that for each $z_n$, 
$$j_{\mathcal U}(z_n) = j_{0,1}(j_{n,\omega}[j_{0,n}(\lambda)]) = j_{n+1,\omega}[j_{1,n+1}(j_{0,1}(\lambda))] = j_{n+1,\omega}[j_{0,n+1}(\lambda)] = z_{n+1}.$$
Let $G'$ be the generic filter generated by $\la z_n : 1 \leq n < \omega \ra$.  Then $j_{\mathcal U}[G] \subseteq G'$.  Thus by Silver's criterion, we may extend the map to $j : M_\omega[G] \to M_\omega[G']$ by putting $j(\tau^G) = j_{\mathcal U}(\tau)^{G'}$ for every $j_{0,\omega}(\mathbb P_{\mathcal U})$-name $\tau$.  But clearly, $M_\omega[G'] = M_\omega[G]$.
\end{proof}

\begin{lemma}$M_\omega[G]$ is $\lambda$-closed.
\end{lemma}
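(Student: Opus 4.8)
The plan is to verify the defining instance of $\lambda$-closure: that every $f \in V$ with $f : \lambda \to \mathrm{Ord}$ lies in $M_\omega[G]$. This suffices, since an arbitrary $\lambda$-sequence $\la a_\xi : \xi < \lambda\ra \in V$ of elements of $M_\omega[G]$ has all of its terms in some $(V_\beta)^{M_\omega[G]}$, and composing with a bijection $b : (V_\beta)^{M_\omega[G]} \to \mu$ lying in $M_\omega[G]$ reduces matters to the function $\xi \mapsto b(a_\xi)$ of this form; recovering it inside $M_\omega[G]$ yields $\la a_\xi\ra = b^{-1}\circ(\xi\mapsto b(a_\xi))$ there as well. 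The first observation is that $f \in \bigcap_{n<\omega} M_n$: each $M_n$ is $\lambda$-closed, so a $\lambda$-sequence of ordinals from $V$ belongs to every $M_n$.

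Next I would read off from $G$ the data needed to reconstruct $f$. The generic determines the sequence $\la z_n : n<\omega\ra$, and since $z_n = j_{n,\omega}[j_{0,n}(\lambda)]$ has order type $j_{0,n}(\lambda)$, its increasing enumeration is exactly $j_{n,\omega}\restriction j_{0,n}(\lambda)$; hence this map and its inverse lie in $M_\omega[G]$, and the transition maps $j_{0,n}\restriction\lambda$ are recovered as compositions of these. In particular $j_{0,\omega}\restriction\lambda \in M_\omega[G]$. Now $j_{0,\omega}(f) \in M_\omega$ because $f \in V = M_0$, so the function $\phi := j_{0,\omega}(f) \circ (j_{0,\omega}\restriction\lambda)$ belongs to $M_\omega[G]$, and $\phi(\xi) = j_{0,\omega}(f)(j_{0,\omega}(\xi)) = j_{0,\omega}(f(\xi))$. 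Thus it remains only to invert $j_{0,\omega}$ pointwise on $\ran(\phi) = j_{0,\omega}[\ran(f)]$, that is, to place $j_{0,\omega}^{-1}\restriction\ran(\phi)$ in $M_\omega[G]$; for then $f = (j_{0,\omega}^{-1}\restriction\ran\phi)\circ\phi \in M_\omega[G]$.

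Reconstructing this inverse is the main obstacle. Two sub-cases are immediate: if $\ran(f) \subseteq \lambda$ then $\ran(\phi) \subseteq z_0 = j_{0,\omega}[\lambda]$ and the recovered map $(j_{0,\omega}\restriction\lambda)^{-1}$ already suffices, while if $\ran(f)$ is bounded below $\crit(j_{n,\omega}) = j_{0,n}(\kappa)$ for a single $n$ then $j_{n,\omega}(f) = f$ and in fact $f \in M_\omega$ outright. The difficulty is that in general $\ran(f)$ is unbounded, whereas the seeds $z_n$ invert $j_{0,\omega}$ only below $\lambda$. Here I would exploit the collapse: $j_{0,\omega}(\mathbb{P}_{\mathcal U})$ sends $j_{0,\omega}(\lambda)$ onto $j_{0,\omega}(\kappa)$, and the self-similarity $j_{\mathcal U}(z_n) = z_{n+1}$ should let one assemble inside $M_\omega[G]$ the coherent sequence $\la j_{n,\omega}(f) : n<\omega\ra$ of finite-stage images, from which each $f(\xi)$ with $f(\xi) < j_{0,\omega}(\kappa)$ is recovered as the stabilized value of $j_{n,\omega}(f(\xi))$, the general case being pulled down to this one through the collapsing map. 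The delicate point—where the explicit definition of $G$ from the threads $\la z_n\ra$ is essential—is to show these reconstructions are genuinely performed by the canonical generic, and not merely available in an outer model.
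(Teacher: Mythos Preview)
Your reduction to $\lambda$-sequences of ordinals and the computation of $\phi(\xi) = j_{0,\omega}(f(\xi))$ inside $M_\omega[G]$ are correct, and you are right that the entire difficulty lies in inverting $j_{0,\omega}$ on $\ran(\phi)$. But you do not actually resolve this obstacle; the final paragraph is a sketch, not an argument. The suggestion to assemble $\la j_{n,\omega}(f) : n<\omega\ra$ in $M_\omega[G]$ begs the question: this is itself an $\omega$-sequence of elements of $M_\omega$ lying in $V$, and you have no way to place it in $M_\omega[G]$ without already knowing some closure. The self-map $j$ does not help either, since $j(j_{0,\omega}(f)) = j_{1,\omega}(j_{0,1}(f)) = j_{0,\omega}(f)$ is a fixed point. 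And the collapse in $M_\omega[G]$ only reaches ordinals below $j_{0,\omega}(\lambda)$, whereas $f(\xi)$ may be an arbitrary ordinal, so ``pulling down through the collapsing map'' cannot handle the general case.

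The paper avoids the inversion problem entirely by using the seed representation for the direct limit rather than the preimage representation. Every ordinal $\eta$ of $M_\omega$ can be written as $\eta = j_{0,\omega}(g)(z_0,\dots,z_{n-1})$ for some $n<\omega$ and some $g : (\mathcal{P}_\kappa\lambda)^n \to \mathrm{Ord}$ in $V$. Choose (in $V$) such a $g_\alpha$ for each value $f(\alpha)$; then $\la g_\alpha : \alpha<\lambda\ra \in V$, so $j_{0,\omega}(\la g_\alpha : \alpha<\lambda\ra) \in M_\omega$, and together with $j_{0,\omega}[\lambda] = z_0 \in M_\omega$ this yields $\la j_{0,\omega}(g_\alpha) : \alpha<\lambda\ra \in M_\omega$. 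Now simply evaluate: $f(\alpha) = j_{0,\omega}(g_\alpha)(z_0,\dots,z_{n_\alpha-1})$, and since $\la z_n : n<\omega\ra \in M_\omega[G]$, the whole sequence $f$ is computed in $M_\omega[G]$. The point is that the seeds $z_n$ already encode enough of $j_{0,\omega}$ to \emph{evaluate} images of functions at them, which is strictly more useful than the fragment $j_{0,\omega}\restriction\lambda$ you extracted; no inversion is needed.
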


\begin{proof}
It suffices to show that $M_\omega[G]$ contains all $\lambda$-sequences of ordinals.
Suppose $\la \xi_\alpha : \alpha < \lambda \ra \subseteq \ord$.  For each $\alpha$, there is $n < \omega$ and a function $f_\alpha : (\p_\kappa\lambda)^n \to \ord$ such that $\xi_\alpha = j_{0,\omega}(f_\alpha)(z_0,\dots,z_{n-1})$.  The sequence $\la j_{0,\omega}(f_\alpha) : \alpha < \lambda \ra$ can be computed from $j_{0,\omega}(\la f_\alpha : \alpha < \lambda \ra)$ and $j_{0,\omega}[\lambda]$, both of which are in $M_\omega$.  The sequence $\la \xi_\alpha : \alpha < \lambda \ra$ can be computed from $\la j_{0,\omega}(f_\alpha) : \alpha < \lambda \ra$ and $\la z_n : n < \omega \ra$, and is thus in $M_\omega[G]$.
\end{proof}

This concludes the proof of Theorem \ref{agreement}.

\section{Fixed point behavior of amenable embeddings}
\label{fixedpts}

One way to produce elementary embeddings is with indiscernibles, like the embeddings derived from $0^\sharp$.  If we want the embedding to be amenable to the target model, indiscernibles can also be used, more consistency strength is required.  Vickers and Welch \cite{MR1856729} showed a near-equiconsistency between the existence of an elementary $j : M \to V$, where $M$ is a transitive class and $V$ satisfies ZFC for formulas involving $j$, and the existence of a Ramsey cardinal.  In this section, we begin with the argument for constructing nontrivial amenable embeddings from a Ramsey cardinal, and then we elaborate on this idea using measurable cardinals to control the behavior of the embedding more precisely.

Recall that a cardinal $\kappa$ is \emph{Ramsey} when for every coloring of its finite subsets in $<\kappa$ colors, $c : [\kappa]^{<\omega} \to \delta < \kappa$, there is $X \in [\kappa]^\kappa$ such that $c \restriction [X]^n$ is constant for all $n<\omega$ ($X$ is \emph{homogeneous}).  Rowbottom \cite{MR0323572} showed that if $\kappa$ is measurable and $\mathcal U$ is a normal measure on $\kappa$, then for any coloring $c : [\kappa]^{<\omega} \to \delta < \kappa$, there is a homogeneous $X \in \mathcal U$.

Suppose $\kappa$ is Ramsey.  Let $\frak A$ be a structure on $V_\kappa$ in a language of size $\delta<\kappa$, that includes a well-order of $V_\kappa$ so that the structure has definable Skolem functions.  For $X \subseteq \frak A$, we write $\hull^{\frak A}(X)$ for the set $\{ f(z) : z \in X^{<\omega}$ and $f$ is a definable Skolem function for $\frak A \}$.  For $\alpha_0<\dots<\alpha_n<\kappa$, let $c(\alpha_0,\dots,\alpha_n) = \{ \varphi(v_0,\dots,v_n) : \frak A \models \varphi(\alpha_0,\dots,\alpha_n) \}$.  The number of colors is at most $2^\delta$, so let $X \in [\kappa]^\kappa$ be homogeneous for $c$.  If  $Y \subseteq X$ and $\xi \in X \setminus Y$, then $\xi \notin \hull^{\frak A}(Y)$:  For if not, let $f(v_0,\dots,v_n)$ be a definable Skolem function, and let $\{\alpha_0,\dots,\alpha_n\} \subseteq Y$ be such that $\xi = f(\alpha_0,\dots,\alpha_n)$.  Let $m\leq n$ be the maximum such that $\alpha_m < \xi$.  Suppose first that $m<n$.  Let $\alpha_{n+1} > \alpha_n$ be in $X$.  By homogeneity, 
\begin{align*}
\frak A \models & \xi = f(\alpha_0,\dots,\alpha_m,\alpha_{m+2},\dots,\alpha_n,\alpha_{n+1}), \text{ and } \\
\frak A \models & \alpha_{m+1} = f(\alpha_0,\dots,\alpha_m,\alpha_{m+2},\dots,\alpha_n,\alpha_{n+1}).
\end{align*}
This contradicts that $f$ is a function.  If $\xi > \alpha_n$, then similarly,  $A \models \xi = \xi' = f(\alpha_0,\dots,\alpha_n)$,
for some $\xi'>\xi$ in $X$, again a contradiction.
Furthermore, for every infinite $\mu \in [\delta,\kappa)$, $|\p(\mu) \cap \hull^{\frak A}(X)| \leq \mu$ \cite{MR0323572}.

Therefore, if $Y \subseteq X$ has size $\kappa$, and $M$ is the transitive collapse of $\hull^{\frak A}(Y)$, then $M$ is a proper transitive subset of $V_\kappa$ of size $\kappa$, and there is an elementary $j : M \to V_\kappa$.  $j$ is amenable simply because $V_\kappa$ has all sets of rank $<\kappa$, so in particular $j[x] \in V_\kappa$ for all $x \in M$.  Furthermore, if $Y_0,Y_1 \in [X]^\kappa$, then the order-preserving bijection $f : Y_0 \to Y_1$ induces an isomorphism $\pi : \hull^{\frak A}(Y_0) \to \hull^{\frak A}(Y_1)$, and thus these two hulls have the same transitive collapse $M$.  So given the structure $\frak A$, this process produces one proper transitive subset $M \subseteq V_\kappa$ of size $\kappa$, which can be amenably embedded into $V_\kappa$ in many different ways.  Let us examine the ways in which these embeddings may differ with regard to fixed points.

\begin{theorem}
\label{regfix}
Suppose $\kappa$ is measurable.  There is a transitive $M \subseteq V_\kappa$ of size $\kappa$ such that for every $\delta \leq \kappa$, there is an elementary embedding $j : M \to V_\kappa$ such that the set of $M$-cardinals fixed by $j$ above $\crit(j)$ has ordertype $\delta$.
\end{theorem}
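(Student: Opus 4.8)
The plan is to produce a single model $M$ and then, for each $\delta \le \kappa$, a suitable set of indiscernibles $Y_\delta \in [X]^\kappa$ whose induced embedding has exactly $\delta$ fixed cardinals above its critical point. Since $\kappa$ is measurable it is Ramsey, so I fix a normal measure $\mathcal U$ and, as in the discussion preceding the theorem, a structure $\mathfrak A$ on $V_\kappa$ in a countable language with definable Skolem functions together with a homogeneous set $X \in \mathcal U$. For any $Y \in [X]^\kappa$ the order-isomorphism of hulls already noted shows that the transitive collapse of $\hull^{\mathfrak A}(Y)$ is one and the same model, independent of $Y$; this defines $M$ once and for all, and the associated amenable embedding $j_Y : M \to V_\kappa$ is the inverse of the collapse. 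First I would record that the critical point is the same for every $Y$: the ordinals of $\hull^{\mathfrak A}(Y)$ below the least element of $Y$ are exactly the parameter-free definable ordinals of $\mathfrak A$, so $\crit(j_Y)$ equals the least ordinal omitted by $\hull^{\mathfrak A}(\emptyset)$, a fixed countable ordinal $\mu_0$ lying below all the indiscernibles.

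The heart of the argument is a combinatorial description of the fixed points. Writing $O_Y = \hull^{\mathfrak A}(Y) \cap \ord$, the action of $j_Y$ on ordinals is the increasing enumeration of $O_Y$, so an ordinal $\gamma$ is fixed exactly when $\gamma \in O_Y$ and $\otp(O_Y \cap \gamma) = \gamma$; that is, the fixed points are precisely the members of $O_Y$ that are closure points of its own enumeration. Two facts drive the construction. The first is a locality principle: by indiscernibility, any Skolem term whose value lies below an ordinal $\gamma$ is independent of the indiscernibles that are $\ge \gamma$, so $O_Y \cap \gamma$ depends only on $Y \cap \gamma$. The second, which I would isolate as the key lemma, is that if $\gamma$ is a limit of fixed points of $j_Y$, then $\gamma$ is itself fixed if and only if $\gamma \in \hull^{\mathfrak A}(Y)$: the count of $O_Y$ below such a $\gamma$ already equals $\gamma$, so the only remaining question is whether $\gamma$ lies in the hull. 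Thus the order type of the fixed-point set is governed entirely by which suprema of previously created fixed points are admitted into, or kept out of, the hull.

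With these in hand I would build $Y_\delta$ by recursion along $\kappa$, choosing its $\alpha$-th element from $X$ while maintaining a running count of the fixed points created so far. The locality principle guarantees that $O_{Y_\delta} \cap \gamma$, and hence the fixed points below $\gamma$, are already determined by the part of $Y_\delta$ chosen below stage $\gamma$, which is what makes the recursion well founded. To create a new fixed point I arrange the next relevant supremum to be an element of $X$ (hence an indiscernible, hence in the hull) and to be a genuine cardinal; to suppress fixed points I choose the next indiscernible strictly above the supremum of those selected so far and off the hull, using that $\hull^{\mathfrak A}(Y_\delta)$ meets each initial segment in a set of small order type so that such dodging ordinals remain available in $X$. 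Continuing the count to exactly $\delta$---creating $\delta$ cardinal fixed points and thereafter permanently suppressing further ones by keeping every later supremum out of the hull---yields a $Y_\delta \in [X]^\kappa$ for which the set of $M$-cardinals fixed by $j_{Y_\delta}$ above $\mu_0$ has order type exactly $\delta$. Here a fixed $\gamma$ is an $M$-cardinal precisely when it is a true cardinal, by elementarity of $j_{Y_\delta}$, which is why I insist the created closure points be cardinals.

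The main obstacle is the self-referential character of the construction: whether a chosen supremum becomes a fixed point depends on its membership in $\hull^{\mathfrak A}(Y_\delta)$, which is itself being built. The locality principle is exactly what tames this, decoupling the hull below a stage from later choices, but care is still needed at limit stages to simultaneously keep the count correct, ensure that suprema intended as fixed points lie in $X$ and are cardinals, and ensure that suprema intended to be suppressed avoid the (size-$\kappa$ but locally thin) hull. Verifying that the relevant portions of $X$ stay unbounded and stationary enough to permit both the inclusion and the dodging maneuvers at every stage is the delicate point, and is where the fatness of $X \in \mathcal U$---for instance, passing to its limit points inside $\mathcal U$---is used.
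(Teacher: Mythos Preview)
Your approach has the right shape---vary the set of indiscernibles to control the fixed cardinals---but two steps are not justified as written. First, your ``locality principle'' (that $O_Y \cap \gamma$ depends only on $Y \cap \gamma$) is not a consequence of mere order-indiscernibility; it is a remarkability-type statement. It \emph{is} true here, but it needs an argument exploiting that $X \in \mathcal U$ contains many of its own limit points, and you do not give one. Second, and more seriously, the recursion that produces $Y_\delta$ is under-specified. Your ``key lemma'' only treats limits of earlier fixed points, so it says nothing about how to create the \emph{first} fixed point above $\crit(j_Y)$ or any isolated one. And the instruction to ``arrange the next relevant supremum to be an element of $X$ and a genuine cardinal'' is exactly the hard part: the supremum of the indiscernibles chosen so far is determined by the earlier stages, and there is no reason it should land in $X$, let alone at a cardinal. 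You acknowledge this self-referential difficulty at the end but do not resolve it.

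The paper sidesteps both issues with a different setup. It works with a structure $\frak A$ on $H_\theta$ (for $\theta > 2^\kappa$) that has $\mathcal U$ in the language, and builds the indiscernible set $A$ by iterated end-extension: each $\alpha_i$ is chosen in $\bigcap(\mathcal U \cap \frak A_i)$, which makes hull control (the analogue of your locality) a direct one-line consequence of normality. Crucially, this construction forces $A \in \mathcal U$, so $A$ already contains $\kappa$-many of its own limit points. Then, instead of a recursion, the paper makes a single explicit choice: $B = (A \setminus \lim A) \cup D$ for any $D \subseteq A \cap \lim A$ of ordertype $\delta$. The fixed-cardinal analysis is done by tracking an explicit increasing continuous sequence $\langle \kappa_i : i < \kappa\rangle$ through a short case split, showing the fixed $M$-cardinals above $\crit(j)$ are exactly the elements of $D$. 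What this buys is that the ``arranging'' you struggle with disappears: limit points of $A$ are available in abundance from the start, and including or omitting them from $B$ is a free choice rather than a constraint to be engineered stage by stage.
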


\begin{proof}
Fix a normal ultrafilter $\mathcal U$ on $\kappa$ and some cardinal $\theta > 2^\kappa$.  Let $\frak A$ be a structure in a countable language expanding $(H_\theta,\in,\mathcal U)$ with definable Skolem functions.  Let $\frak A_0 \prec \frak A$ be such that $|\frak A_0| = |\frak A_0 \cap \kappa| < \kappa$.  We show something a little stronger than the claimed result; namely, for every $\delta \leq \kappa$, there is a set of indiscernibles $B \subseteq \kappa$ of size $\kappa$ such that if $\frak B = \hull(\frak A_0 \cup B)$, then $\frak B \cap \sup (\frak A_0 \cap \kappa) = \frak A_0 \cap \kappa$, and the set of cardinals in the interval $[\sup(\frak A_0 \cap \kappa),\kappa)$ that are fixed by the transitive collapse of $\frak B$ has ordertype $\delta$.  Thus if the inverse of the transitive collapse map of $\frak A_0$ has no fixed points above its critical point, for example if $\frak A_0$ is countable, then the result follows.

Let $\alpha_0 \in\bigcap (\mathcal U \cap \frak A_0)$ be strictly greater than $\sup(\frak A_0 \cap \kappa)$, and let $\frak A_1 = \hull(\frak A_0 \cup \{ \alpha_0 \})$.  We claim that $\frak A_0 \cap \kappa = \frak A_1 \cap \alpha_0$.  If $\gamma \in \frak A_1 \cap \kappa$, then there is a function $f : \kappa \to \kappa$ in $\frak A_0$ such that $f(\alpha_0) = \gamma$.  If $\gamma < \alpha_0$, then $f$ is regressive on a set in $\mathcal U$, and therefore constant on a set in $\mathcal U$, and thus $\gamma \in \frak A_0$.
Continue in this way, producing a continuous increasing sequence of elementary substructures of $\frak A$, $\la \frak A_i : i < \kappa \ra$, and an increasing sequence of ordinals $\la \alpha_i : i < \kappa \ra$, such that for $0<i<\kappa$, $\alpha_i = \min\bigcap(\mathcal U \cap \frak A_i)$, and $\frak A_{i+1} = \hull (\frak A_i \cup \{ \alpha_i \})$.  

\begin{claim}
$\{ \alpha_i : i < \kappa \} \in \mathcal U $.  
\end{claim}
\begin{proof}
Let $\frak A_\kappa = \bigcup_{i<\kappa} \frak A_i$, and let $\la X_i : i < \kappa \ra$ enumerate $\mathcal U \cap \frak A_\kappa$.  There is a club $C\subseteq\kappa$ such that for all $\beta \in C$, $\mathcal U \cap \frak A_\beta = \{ X_i : i < \beta \}$, and $\beta = \sup(\frak A_\beta \cap \kappa)$.  If $\beta \in \bigcap_{i<\beta} X_i$, then $\alpha_\beta = \beta$.  This means that $C \cap \Delta_{i<\kappa} X_i \subseteq \{ \alpha_i : i < \kappa \}$.  Since $\mathcal U$ is normal, the claim follows.
\end{proof}

Let $A = \{ \alpha_i : i < \kappa \}$.  Under a mild assumption on $\frak A_0$, $A$ is a set of order-indiscernibles for $\frak A$.  For let $\varphi$ be a formula in $n$ free variables in the language of $\frak A$.  Let $c_\varphi : [\kappa]^n \to 2$ be the coloring defined by $c_\varphi(a_1,\dots,a_n) = 1$ if $\frak A \models \varphi(a_1,\dots,a_n)$, and 0 otherwise.  Since $c_\varphi \in \frak A$, Rowbottom's Theorem implies that there is a set $X_\varphi \in \mathcal U$ such that $\frak A \models \varphi(a_1,\dots,a_n) \leftrightarrow  \varphi(b_1,\dots,b_n)$ whenever $\la a_1,\dots,a_n \ra,\la b_1,\dots,b_n \ra$ are increasing sequences from $X_\varphi$.  By slightly enlarging $\frak A_0$ at the beginning if necessary, we may assume $c_\varphi \in \frak A_0$ for each $\varphi$.  By elementarity, we may assume $X_\varphi \in \frak A_0$.  Thus $A \subseteq X_\varphi$ for each such $\varphi$.

Let $\la \beta_i : i < \kappa \ra$ be the increasing enumeration of the closure of $A \cup \{ \sup(\frak A_0 \cap \kappa) \}$.

\begin{claim}
\label{hullcontrol}
For every $X \subseteq A$ and $\gamma<\kappa$, if $\beta_\gamma \notin X$, then $\hull(\frak A_0 \cup X)$ is disjoint from the interval  $[\beta_\gamma,\beta_{\gamma+1})$.
\end{claim}
\begin{proof}
Suppose $\beta_\gamma \notin X$, $\xi \in [\beta_\gamma,\beta_{\gamma+1})$, but there are elements $c_1,\dots,c_k \in \frak A_0$, ordinals $i_1 < \dots < i_n$, and a Skolem function $f$ such that $\xi = f(c_1,\dots,c_k,\alpha_{i_1},\dots,\alpha_{i_n})$, with $\{ \alpha_{i_1},\dots,\alpha_{i_n} \} \subseteq X$.  Let us assume that we have chosen $f$ and $c_1,\dots,c_k$ to output $\xi$ with the least number $n$ of parameters from $X$.  
Since $\alpha_{i_n+1} > \sup(\frak A_{i_n+1} \cap \kappa)$, we have $\xi < \alpha_{i_n}$.  Working in $N = \hull(\frak A_0 \cup \{ \alpha_{i_1},\dots,\alpha_{i_{n-1}} \})$, let 
$$Y = \{ \eta < \kappa : f(c_1,\dots,c_k, \alpha_{i_1},\dots,\alpha_{i_{n-1}},\eta ) < \eta \}.$$
Since $\alpha_{i_n} \in Y \in \mathcal U$ and $\mathcal U$ is normal, there is $\zeta < \kappa$ and $Z \in \mathcal U \cap N$ such that $f(c_1,\dots,c_k, \alpha_{i_1},\dots,\alpha_{i_{n-1}},\eta ) = \zeta$ for all $\eta \in Z$.  Thus, $\zeta = \xi \in N$, and $\xi$ is the output of a Skolem function with only $n-1$ inputs from $X$, contrary to the minimality assumption.
\end{proof}

Now let $D \subseteq A \cap \lim A$ have ordertype $\delta$.  Let $B = (A \setminus \lim A) \cup D$.  Let $\frak B = \hull(\frak A_0 \cup B)$, let $H$ be the transitive collapse of $\frak B$,  let $M = H \cap V_\kappa$, and let $j : M \to V_\kappa$ be the inverse of the collapse map.

Let $C = j^{-1}[B]$, and let $\la B(i) : i < \kappa \ra$ and $\la C(i) : i < \kappa \ra$ denote the respective increasing enumerations of these sets.  Let $\lambda \in \lim C$.  By Claim \ref{hullcontrol}, there is some $i<\kappa$ such that $j(\lambda) = \beta_i$, and $\beta_i \in B$.  Thus $C$ is closed.
Since $|C(i)| = |C(i+1)|$ for all $i$, every cardinal of $V_\kappa$ above $C(0)$ is a limit point of $C$.

Let $\kappa_0  = \ot(\frak A_0 \cap \kappa)$.  Since $\frak B \cap \alpha_0 = \frak A_0 \cap \kappa$, $j(\kappa_0) = \alpha_0 > \kappa_0$.
Define an increasing continuous sequence $\la \kappa_i : i < \kappa \ra$
as follows.  Given $\kappa_i$, if $j(\kappa_i) > \kappa_i$, let $\kappa_{i+1} = j(\kappa_i)$, and if $j(\kappa_i) = \kappa_i$, let $\kappa_{i+1} = (\kappa_i^+)^M$.  For limit $\lambda$, let $\kappa_\lambda = \sup_{i<\lambda}\kappa_i$.

We claim that $D$ is the set of points above $\kappa_0$ that are fixed by $j$.  To show each point in $D$ is fixed, note that if $\xi \in D$, then $\xi$ is closed under $j$, because $\ot(B \cap \xi) = \xi$.  Thus $\xi = \kappa_\xi$, and $j(\xi) = \xi$.  To show no other points are fixed, we argue by induction that if $\xi \notin D$, then $j(\kappa_\xi) = \kappa_{\xi+1}$.  Assume that this holds for all $i<\xi$, and $\xi \notin D$.
\begin{itemize}
\item \underline{Case 1:} $\xi$ is a limit.  Then $\kappa_\xi = \sup_{i<\xi} \kappa_\xi$.  By the inductive assumption, $\kappa_i \in \ran(j)$ for unboundedly many $i <\xi$, so $\kappa_\xi$ is a cardinal in $V$, and thus $\kappa_\xi = C(\kappa_\xi)$.  Therefore, $j(\kappa_\xi) = B(\kappa_\xi)$.  If $\xi < \kappa_\xi$, then $\kappa_\xi \notin A$ since it is singular, so $B(\kappa_\xi) > \kappa_\xi$.  If $\xi = \kappa_\xi$, then $B(\xi) > \xi$ since $\xi \notin D$.  In either case, the definition of the sequence gives that $\kappa_{\xi+1} = j(\kappa_\xi)$.
\item \underline{Case 2:} $\xi = \eta+1$, and $\eta \in D$.  Then $\kappa_\xi = (\kappa_\eta^+)^M$.  Since $|\frak A_{\eta+1}| = \eta$, $(\kappa_\eta^+)^M < (\kappa_\eta^+)^V$.  Thus $j(\kappa_\xi) = (\kappa_\eta^+)^V = \kappa_{\xi+1}$.
\item \underline{Case 3:} $\xi = \eta+1$, and $\eta \notin D$.  Then by induction, $\kappa_\eta < j(\kappa_\eta) = \kappa_\xi$.  By elementarity, $\kappa_\xi = j(\kappa_\eta) < j(\kappa_\xi)$, so by definition, $\kappa_{\xi+1} = j(\kappa_\xi)$.
\end{itemize}
To conclude, if $\xi \in D$, then the interval $[\kappa_\xi,\kappa_{\xi+1})$ contains only one $M$-cardinal, and it is fixed.  If $\xi \notin D$, then the interval $[\kappa_\xi,\kappa_{\xi+1})$ is moved into the interval $[\kappa_{\xi+1},\kappa_{\xi+2})$.
\end{proof}

\begin{theorem}
\label{singfix}
Suppose $\kappa$ is a measurable limit of measurables, $\delta \leq \kappa$, and $f : \delta \to 2$.  Then there is a transitive $M \subseteq V_\kappa$ of size $\kappa$ and an elementary embedding $j : M \to V_\kappa$ such that the set of $M$-cardinals fixed by $j$ above $\crit(j)$ has ordertype $\delta$, and for all $\alpha < \delta$, the $\alpha^{th}$ fixed point is regular iff $f(\alpha) = 1$.
\end{theorem}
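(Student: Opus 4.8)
The plan is to reuse the construction of Theorem~\ref{regfix} essentially verbatim and to superimpose control of regularity by a single elementarity observation. Recall that in that proof one produces an indiscernible set $A\in\mathcal U$, forms $B=(A\setminus\lim A)\cup D$ for a chosen $D\subseteq A\cap\lim A$ of ordertype $\delta$, and obtains $j:M\to V_\kappa$ as the inverse of the collapse of $\frak B=\hull(\frak A_0\cup B)$, with the fixed points of $j$ above $\crit(j)$ being exactly the elements of $D$. The new point is that for a fixed point $\xi\in D$ we have $j(\xi)=\xi$, so since $j$ is elementary and $\kappa$ is inaccessible, ``$\xi$ is a regular cardinal'' holds in $M$ if and only if it holds in $V_\kappa$, that is, if and only if $\xi$ is a regular cardinal in $V$. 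Thus it suffices to arrange that the $\alpha^{th}$ element of $D$ is a regular cardinal of $V$ when $f(\alpha)=1$ and a singular cardinal of $V$ when $f(\alpha)=0$; the rest of the regularity claim then follows, and the identity of the fixed-point set with $D$ is inherited from Theorem~\ref{regfix} without change.

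So the task reduces to showing that $A\cap\lim A$ contains, cofinally in $\kappa$, limit indiscernibles of both kinds. For the singular case I would take members of cofinality $\omega$: as in the proof of the Claim in Theorem~\ref{regfix}, there is a club $C$ on which $\alpha_\beta=\beta$, so the points of $C$ with $\cf(\beta)=\omega$ are singular limit cardinals lying in $A\cap\lim A$, and this set is stationary, hence cofinal. For the regular case I would use the measurables: enrich the structure to $\frak A=(H_\theta,\in,\mathcal U,W,\dots)$, where $W$ is the (by hypothesis unbounded) set of measurable cardinals below $\kappa$, and arrange that the measurable members of $A$ that are limit points of $A$ are cofinal in $\kappa$. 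Here the limit-of-measurables hypothesis is what guarantees a sufficient supply; I would obtain it by interleaving into the recursion defining $\la\frak A_i:i<\kappa\ra$ stages at which $\alpha_i$ is taken to be the least measurable above $\sup(\frak A_i\cap\kappa)$, checking that the resulting $A$ is still a set of indiscernibles and that its measurable elements accumulate to measurable points of $A$. If one is content with regular rather than measurable fixed points this step simplifies, since the inaccessibles below $\kappa$ already lie in $\mathcal U$ and so inaccessible limit points of $A$ in $A$ are automatically cofinal.

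With both types available cofinally, I would build $D$ by a recursion of length $\delta$: having chosen $\xi_\eta$ for $\eta<\alpha$, let $\xi_\alpha$ be a measurable element of $A\cap\lim A$ above $\sup_{\eta<\alpha}\xi_\eta$ if $f(\alpha)=1$, and a cofinality-$\omega$ singular-cardinal element otherwise, so that $\ot(D)=\delta$. Setting $B$, $\frak B$, $M$, and $j$ as before, Claim~\ref{hullcontrol} and the case analysis of Theorem~\ref{regfix} show verbatim that the $M$-cardinals fixed by $j$ above $\crit(j)$ are precisely $\la\xi_\alpha:\alpha<\delta\ra$, and by the elementarity observation $\xi_\alpha$ is regular in $M$ exactly when $f(\alpha)=1$. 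The main obstacle is the regular case of the previous paragraph, namely securing measurable limit indiscernibles inside $A$: the measurables form a $\mathcal U$-null set and so are invisible to the normality argument that drives the basic construction, and it is the limit-of-measurables hypothesis, together with the modified two-type recursion, that is needed to place them cofinally among the indiscernibles while preserving indiscernibility.
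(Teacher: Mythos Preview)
Your approach has a genuine gap in the singular case. Every element of the indiscernible set $A$ is inaccessible: each $\alpha_i$ lies in $\bigcap(\mathcal U\cap\frak A_i)\subseteq\bigcap(\mathcal U\cap\frak A_0)$, and the set of inaccessibles below $\kappa$ belongs to $\mathcal U\cap\frak A_0$ (it is definable from $\kappa\in\frak A_0$, and it is in $\mathcal U$ because $\kappa$ remains inaccessible in the ultrapower). Hence $A\cap\lim A$ consists entirely of regular cardinals, and no choice of $D\subseteq A\cap\lim A$ can yield a singular fixed point. Your reading of the Claim in Theorem~\ref{regfix} is slightly off: the proof establishes $\alpha_\beta=\beta$ only on $C\cap\Delta_i X_i$, not on the full club $C$, and that diagonal intersection is contained in the set of inaccessibles.

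The obstruction is not merely a detail of this particular construction. A single set of order-indiscernibles over a structure in which regularity is definable cannot contain both regular and singular cardinals: the unary formula ``$v$ is regular'' would distinguish them, contradicting indiscernibility. So no modification of the recursion that still produces one set of indiscernibles for $\frak A$ can succeed. The paper therefore drops the single-indiscernible-set picture and instead builds a continuous chain $\la\frak A_\alpha:\alpha<\kappa\ra$ of elementary substructures, handling each intended fixed point by a separate local construction. For a regular fixed point it applies the Theorem~\ref{regfix} construction on the next measurable; for a singular one it climbs through the next $\omega$ measurables so that their supremum $\delta_\omega$ satisfies $\ot(\frak A_\omega\cap\delta_\omega)=\delta_\omega$ while $|\frak A_\omega|=\delta_\omega$, forcing $\delta_\omega$ to be an isolated fixed cardinal of cofinality $\omega$. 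The limit-of-measurables hypothesis is used exactly here, to supply a fresh $\omega$-block of measurables for each singular step.
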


\begin{proof}
Let $\mathcal U$ be a normal measure on $\kappa$, let $\theta > 2^\kappa$, and let $\frak A$ be a structure in a countable language expanding  $(H_\theta,\in,\mathcal U)$ with definable Skolem functions.  Let us first show how to get isolated singular fixed points.  Suppose $\frak A_0 \prec \frak A$ and there is a cardinal $\mu\in \frak A_0 \cap \kappa$ such that $|\frak A_0|<\mu$.  Let $\la \delta_n : n < \omega \ra$ enumerate the first $\omega$ measurable cardinals in $\frak A_0 \cap \kappa$ above $\mu$.  By the previous construction, we may make a series of elementary extensions $\frak A_0 \prec \frak A_1 \prec \frak A_2 \prec \dots$ such that:
\begin{enumerate}
\item For all $n$, $\frak A_n \cap \delta_n = \frak A_{n+1} \cap \delta_n$.
\item $\ot(\frak A_1 \cap \delta_0) = | \frak A_1 | = \mu$.
\item For $n \geq 1$, $\ot(\frak A_n \cap \delta_n) = |\frak A_n | = \delta_{n-1}$.
\end{enumerate}
Then let $\frak A_\omega = \bigcup_{n<\omega} \frak A_n$.  Let $j : H \to \frak A_\omega$ be the inverse of the transitive collapse.  It is easy to see that $j(\mu) = \delta_0$, and for all $n<\omega$, $j(\delta_n) = \delta_{n+1}$.  Let $\delta_\omega = \sup_n \delta_n$, which is in $\frak A_0$.  Since $\ot(\frak A_\omega \cap \delta_\omega) = \delta_\omega$, $j(\delta_\omega) = \delta_\omega$.  Since $|\frak A_\omega| = \delta_\omega$, all cardinals of $H$ above $\delta_\omega$ are below $(\delta_\omega^+)^V$, so $j$ has no cardinal fixed points greater than $\delta_\omega$.

To prove the theorem, we build a continuous chain of elementary submodels of $\frak A$, $\la \frak A_\alpha : \alpha < \kappa \ra$, along with an increasing sequence of cardinals $\la \delta_\alpha : \alpha < \delta \ra \subseteq \kappa$, such that:
\begin{enumerate}
\item If $\alpha < \beta$, then $\frak A_\beta \cap \sup(\frak A_\alpha \cap \kappa)= \frak A_\alpha \cap \kappa$.
\item For each $\alpha$, $\delta_\alpha \in \frak A_{\alpha+1} \setminus \sup(\frak A_\alpha \cap \kappa)$.
\item If $\alpha < \delta$, then  $\delta_\alpha$ is the unique cardinal fixed point of the transitive collapse of $\frak A_{\alpha+1}$ that is $\geq\sup(\frak A_{\alpha} \cap \kappa)$, and $\delta_\alpha$ is regular iff $f(\alpha) = 1$.
\item If $\alpha \geq \delta$, then there are no fixed points in the transitive collapse of $\frak A_{\alpha+1}$ that are $\geq \sup(\frak A_{\alpha} \cap \kappa)$.
\end{enumerate}
Given $\frak A_\alpha$, first adjoin an ordinal $\gamma_\alpha \in \bigcap(\frak A_\alpha \cap \mathcal U)$ that is strictly above $\sup(\frak A_\alpha \cap \kappa)$, to form a model $\frak A_\alpha'$ with the same ordinals below $\gamma_\alpha$, so that if $\pi$ is the transitive collapse of $\frak A_\alpha'$, then $\pi(\gamma_\alpha)<\gamma_\alpha$.  If $\alpha<\delta$ and $f(\alpha) = 0$, let $\delta_\alpha$ be the supremum of the first $\omega$ measurable cardinals in $\frak A_\alpha'$ above $\gamma_\alpha$.   If $\alpha < \delta$ and $f(\alpha) = 1$, let $\delta_\alpha$ be the least measurable cardinal in $\frak A_\alpha'$ above $\gamma_\alpha$.  Depending on $f(\alpha)$, use either the above construction or that of Theorem \ref{regfix} to build an extension $\frak A_{\alpha+1}$ of $\frak A_\alpha'$ with size $\delta_\alpha$, so that $\delta_\alpha$ is the unique cardinal fixed point of its transitive collapse above $\sup(\frak A_\alpha \cap \kappa)$, and $\frak A_{\alpha+1} \cap \gamma_\alpha = \frak A_\alpha \cap \kappa$.

If $\delta = \kappa$, we continue the same process for $\kappa$-many steps. If $\delta < \kappa$, then once we have $\frak A_\delta$, we can apply Theorem \ref{regfix} to extend to a $\kappa$-sized model $\frak A_\kappa$, such that $\frak A_\kappa \cap \sup(\frak A_\delta \cap \kappa) = \frak A_\delta \cap \kappa$, and the transitive collapse of $\frak A_\kappa$ has no fixed points in the interval $[\sup(\frak A_\delta \cap \kappa),\kappa)$.
\end{proof}

\section{Categories of amenable embeddings}
\label{amcat}

In this section, we explore the structure of the categories $\mathcal A_\delta$ of models of \ZFC of height $\delta$ and amenable embeddings between them.  We attempt to determine, as comprehensively as we can, what kinds of linear and partial orders can appear as honest subcategories of $\mathcal A_\delta$, for a countable ordinal $\delta$.  We start by applying the methods of the previous section to build patterns of ``amalgamation'' and then use stationary tower forcing to build transfinite chains and patterns of ``splitting.''  Then we pass to ill-founded models that are well-founded up to $\delta$ to find subcategories of $\mathcal A_\delta$ that are isomorphic to certain canonical ill-founded structures.

The following large cardinal notion will be important for this section:
\begin{definition*}[\cite{MR2069032}]A cardinal $\kappa$ is called \emph{completely J\'onsson} if it is inaccessible and for every $a \in V_\kappa$ which is stationary in $\bigcup a$, the set 
$\{ M \subseteq V_\kappa : M \cap \bigcup a \in a \text{ and } |M \cap \kappa| = \kappa \}$
is stationary in $\p(V_\kappa)$.
\end{definition*}

The construction of the previous section shows that measurable cardinals are completely J\'onsson.  For let $\kappa$ be measurable and let $a \in V_\kappa$ be stationary.  For any function $F : V_\kappa^{<\omega} \to V_\kappa$, we can take a well-order $\lhd$ on $H_\theta$, where $\theta > 2^\kappa$, and find an elementary substructure $\frak A \prec (H_\theta,\in,\lhd,F)$ of size $<\kappa$, and such that $\frak A \cap \bigcup a \in a$.  By repeated end-extension, we get $\frak B \prec (H_\theta,\in,\lhd,F)$ such that $|\frak B \cap \kappa| = \kappa$ and $\frak B \cap \bigcup a = \frak A \cap \bigcup a$.  Since $\frak B \cap V_\kappa$ is closed under $F$, the claim follows.  An easy reflection argument shows that if $\kappa$ is measurable and $\mathcal U$ is a normal measure on $\kappa$, then the set of completely J\'onsson cardinals below $\kappa$ is in $\mathcal U$.

Similar arguments show the following: If $\la \kappa_i : i < \delta \ra$ is an increasing sequence of measurable cardinals with supremum $\theta$, and $\la \mu_i : i < \delta \ra$ is a nondecreasing sequence of cardinals with $\mu_i \leq \kappa_i$ for all $i$, then the set $\{ X \subseteq \theta : |X \cap \kappa_i| = \mu_i$ for all $i \}$ is stationary.

\subsection{Collapsing inward}

Following \cite{MR1940513}, let us say that a \emph{sequential tree} is a collection of functions $p : n \to \omega$ for $n<\omega$ closed under initial segments, ordered by $p \leq q$ iff $p \supseteq q$.  If $T$ is a well-founded sequential tree, then it has a rank function $\rho_T : T \to \omega_1$, defined inductively by putting $\rho_T(p) = 0$ for minimal nodes $p$, and $\rho_T(p) = \sup \{ \rho_T(q) + 1 : q < p \}$.  The rank of $T$ is $\rho_T(\emptyset)$.

\begin{theorem}
\label{wftrees}
Suppose there is a countable transitive model $N$ of \ZFC containing a completely J\'onsson cardinal $\delta$ that has infinitely many measurables below it, $\alpha \in N$ is an ordinal, and $T$ is a well-founded sequential tree of rank $\alpha$.  Then there is an honest subcategory of $\mathcal A_\delta$ isomorphic to $T$.
\end{theorem}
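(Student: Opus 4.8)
The plan is to realize each node $p \in T$ as the transitive collapse $M_p$ of a height-$\delta$ elementary submodel of a fixed structure on $V_\delta$, with the tree order reflected by reverse inclusion of the submodels, and with a fixed-point pattern installed on each $M_p$ that both encodes $p$ and guarantees honesty. Throughout I work inside $N$. Fix a well-ordering $\lhd$ of $V_\delta$ and let $\frak A = (V_\delta, \in, \lhd, \dots)$ be a structure with definable Skolem functions in a countable language, with predicates naming the measurable cardinals below $\delta$ and normal measures on them. Because $\delta$ is completely J\'onsson, for any stationary $a \in V_\delta$ I can find $\frak B \prec \frak A$ with $\frak B \cap \bigcup a \in a$ and $|\frak B \cap \delta| = \delta$; the latter condition is exactly what forces the transitive collapse of $\frak B$ to again have height $\delta$, so every model I produce genuinely lies in $\mathcal A_\delta$. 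The infinitely many measurables below $\delta$ provide the room to run the constructions of Theorems \ref{regfix} and \ref{singfix} on bounded initial segments of $V_\delta$ while leaving the full height intact.

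Next I build the submodels by recursion on $\len(p)$, which is finite for every node. I set $\frak B_\emptyset$ to be a height-$\delta$ elementary submodel of $\frak A$, and for a node $p = q {}^\frown \la k \ra$ I shrink $\frak B_q$ to a submodel $\frak B_p \subseteq \frak B_q$ by removing a block of indiscernibles, as in the hull constructions of Section \ref{fixedpts}, in such a way that the induced embedding ``reads off'' the coordinate $k$: the value $k$ is recorded by the order type and the regular/singular pattern of the cardinal fixed points appearing in the corresponding interval, using Theorem \ref{regfix} to prescribe order types and Theorem \ref{singfix} to prescribe regularity. Since $T$ is countable and each branch is finite, only countably many of the measurables below $\delta$ are consumed. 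The resulting family $\la \frak B_p : p \in T \ra$ satisfies $\frak B_p \subseteq \frak B_q$ precisely when $q$ is an initial segment of $p$, i.e.\ when $p \leq q$ in the tree order. Let $M_p$ be the transitive collapse of $\frak B_p$; each $M_p$ has height $\delta$.

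For $p \leq q$ the inclusion $\frak B_p \prec \frak B_q$ descends through the two collapse maps to an elementary embedding $j_{p,q} : M_p \to M_q$; these are amenable for the same reason as in Section \ref{fixedpts} (the relevant images of bounded sets are present in the target), and they commute because the collapse maps are canonical, so $p \mapsto M_p$, $(p \leq q) \mapsto j_{p,q}$ is a functor from $T$ into $\mathcal A_\delta$. It remains to check that this copy of $T$ is honest and that $p \mapsto M_p$ is injective: I must show that there is an elementary embedding $M_p \to M_q$ in $\mathcal E_\delta$ if and only if $q$ is an initial segment of $p$, which in particular gives honesty inside $\mathcal A_\delta$. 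One direction is the construction just given. For the converse, I use that any elementary $j : M_p \to M_q$ must carry the encoded fixed-point data of $M_p$ to that of $M_q$; since the order type of the fixed-point set is the rank-type invariant built in by Theorem \ref{regfix}, $j$ forces $\rho_T(p) \leq \rho_T(q)$, and the finer regular/singular pattern forces the finite sequence coded by $q$ to be an initial segment of the one coded by $p$. This is exactly $p \leq q$, which also yields injectivity.

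The main obstacle is this last honesty argument: ruling out spurious embeddings between tree-incomparable nodes. The point is that the fixed-point pattern must be a complete and \emph{rigid} invariant, preserved and reflected by any elementary embedding between the $M_p$, so that no embedding can exist unless the source's code extends the target's. Here the well-foundedness of $T$ is essential: it furnishes a genuine ordinal rank $\rho_T$, which via Theorem \ref{regfix} becomes an order type of fixed points that elementary embeddings can only increase, so that cycles and reversed embeddings between incomparable nodes are excluded on ordinal grounds. Verifying that the constructions of Section \ref{fixedpts} can be layered along a finite branch without interference, and that the encoding is recoverable inside each $M_p$, is the technical heart of the proof.
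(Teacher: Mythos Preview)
Your honesty argument has a genuine gap: the fixed-point pattern you install is a property of the \emph{specific} embedding $j_{p,q}$ you construct, not an intrinsic invariant of the model $M_p$. Indeed, Theorem~\ref{regfix} is precisely the statement that a \emph{single} model $M$ admits embeddings into $V_\kappa$ realizing \emph{every} possible ordertype of cardinal fixed points; so nothing about $M_p$ alone records the pattern you built in. When you then try to rule out an arbitrary elementary $j:M_p\to M_q$ for incomparable $p,q$, there is no reason this $j$ should have any relationship to the fixed points of your constructed $j_{p,q}$ or $j_{q,r}$, and your claim that ``$j$ must carry the encoded fixed-point data of $M_p$ to that of $M_q$'' is unjustified. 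The appeal to $\rho_T$ does not help either: even granting some monotonicity, $\rho_T(p)\le\rho_T(q)$ does not imply $p\le q$ for incomparable nodes.

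The paper's proof fixes this by encoding $p$ in an invariant that \emph{is} intrinsic to $M_p$ and is respected by \emph{every} elementary embedding: the actual ordinal value of $\kappa_n^{M_p}$, the $n^{\text{th}}$ measurable cardinal of $M_p$. Using an injection $f:\omega^2\to\omega$, one arranges (by taking elementary substructures of suitable $H_\theta$ that are thin at one measurable and full at the others) that $\kappa_{f(i,p(i))}^{M_p}<\kappa_{f(i,p(i))}^{M}$ for $i<\len(p)$, while $\kappa_{f(i,i')}^{M_p}=\kappa_{f(i,i')}^{M}$ for all $(i,i')\notin p$. Honesty is then a one-line monotonicity argument: if $p(i)\ne q(i)$ and $n=f(i,q(i))$, any $j:M_p\to M_q$ would send $\kappa_n^{M_p}=\kappa_n^M$ to $\kappa_n^{M_q}<\kappa_n^M$, impossible. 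The construction proceeds by induction on the rank $\alpha$, shrinking one measurable per level and recursing into the resulting models on the subtrees $T_n$; this is where $\alpha\in N$ and well-foundedness of $T$ are actually used, namely to ensure the recursion terminates inside $N$.
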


\begin{proof}
For the purposes of this argument, for a natural number $n$ and a model $M$ in the language of set theory, let $\kappa_n^M$ denote the $n^{th}$ measurable cardinal in $M$, if it exists.  We show the following claim by induction on $\alpha$.  Let $\Phi(T,\alpha,M,\delta,f)$ stand for the assertion that:
\begin{enumerate}
\item $T$ is a well-founded sequential tree of rank $\alpha$.
\item $M \in N$ is a transitive model of $\ZF -$ Powerset + ``Every set has a well-ordering'' + ``$\delta$ is a completely J\'onsson cardinal with infinitely many measurables below it'' $+$ ``$V_{\delta+\alpha+1}$ exists''.
\item $f : \omega^2 \to \omega$ is an injection in $M$.
\end{enumerate}
We claim that if $\Phi(T,\alpha,M,\delta,f)$ holds,
then there is an assignment $p \mapsto M_p$ for $p \in T$ such that:
\begin{enumerate}
\item Each $M_p$ is a transtive set of rank $\delta$ in $M$, with $M_\emptyset = V^M_\delta$
\item If $p \leq q$, then there is an amenable $j : M_p \to M_q$ in $M$.
\item For all $p \in T$, $\kappa_{f(i,p(i))}^{M_p} < \kappa_{f(i,p(i))}^M$ for $i < \len(p)$, and $\kappa_{f(i,i')}^{M_p} = \kappa_{f(i,i')}^M$ for $(i,i') \notin p$.
\end{enumerate}
Let us first show why the claim implies that the generated subcategory is honest.  Suppose $p(i) \not= q(i)$, and let $n = f(i,q(i))$.  If there were an elementary embedding $j : M_p \to M_q$, then 
$$j(\kappa^{M_p}_n) = \kappa^{M_q}_n < \kappa_n^M = \kappa^{M_p}_n,$$
which is impossible since embeddings are monotonic on ordinals.

Suppose $\Phi(T,\alpha,M,\delta,f)$ holds, and the claim holds below $\alpha$.
 Without loss of generality, $\la n \ra \in T$ for each $n<\omega$, since every sequential tree of rank $\alpha$ can be embedded into one of this form.  For each $n$, let $T_n = \{ \la p(1),\dots,p(\len(p) -1) \ra : p \in T$ and $p(0) = n \}$.  Note that each $T_n$ is a well-founded sequential tree, and $\alpha = \sup_n (\rank(T_n)+1)$.  Let $\alpha_n = \rank(T_n)$.

Working in $M$, let $\theta = |V_{\delta+\alpha}|^+$.  We can take for each $n<\omega$, an elementary substructure $\frak A_n \prec H_\theta$ such that:
\begin{enumerate}
\item $| \frak A_n \cap \kappa_{f(0,n)}^M | < \kappa_{f(0,n)}^M$.
\item $| \frak A_n \cap \kappa_{f(i,i')}^M | = \kappa_{f(i,i')}^M$ for $(i,i') \not= (0,n)$.
\item $\delta \in \frak A_n$ and $| \frak A_n \cap \delta | = \delta$. 
\end{enumerate}
Let $M_n$ be the transitive collapse of $\frak A_n$.  Let $M_\emptyset = V_\delta^M$ and $M_{\la n \ra} = (V_\delta)^{M_n}$.  (Note that $M_{\la n \ra}$ is a rank initial segment of $M_n$, and the larger model does not satisfy the powerset axiom.)  Each $M_{\la n \ra}$ amenably embeds into $M_\emptyset$.  For each $n$, $\kappa_{f(0,n)}^{M_n} < \kappa_{f(0,n)}^M$ and $\kappa_{f(i,i')}^{M_n} = \kappa_{f(i,i')}^M$ for $(i,i') \not= (0,n)$.

Let $g : \omega^2 \to \omega$ be defined by $g(i,i') = f(i+1,i')$.  Then $\Phi(T_n,\alpha_n,M_n,\delta,g)$ holds for each $n$.  By the induction hypothesis, there is an assignment $p \mapsto M_p$, for $p \in T \setminus \{ \emptyset \}$, such that whenever $p(0) = n$,
\begin{enumerate}
\item $M_p$ is a transtive set of rank $\delta$ in $M_n$.
\item If $q \leq p$, then there is an amenable $j : M_q \to M_p$ in $M_n$.
\item For $1 \leq i < \len(p)$, 
$$\kappa_{g(i-1,p(i))}^{M_p} = \kappa_{f(i,p(i))}^{M_p} <  \kappa_{f(i,p(i))}^{M_{n}} = \kappa_{f(i,p(i))}^M,$$
and for a pair $(i,i') \notin p$, with $i \geq 1$,
$$\kappa_{g(i-1,i')}^{M_p} = \kappa_{f(i,i')}^{M_p} =  \kappa_{f(i,i')}^{M_{n}} = \kappa_{f(i,i')}^M.$$
\item For all $m<\omega$, $\kappa_{f(0,m)}^{M_p} = \kappa_{f(0,m)}^{M_n}$.
\end{enumerate}
Therefore, the induction hypothesis holds for $\alpha$.
\end{proof}

\subsection{Forcing outward}

The background material for this subsection can be found in \cite{MR2069032}.

\begin{definition*}[Woodin]
The (proper class) \emph{stationary tower forcing} $\mathbb P_\infty$ is the class of stationary sets ordered by $a \leq b$ if $\bigcup a \supseteq \bigcup b$ and $\{ x \cap \bigcup b : x \in a \} \subseteq b$.  If $\kappa$ is a strong limit cardinal, then $\mathbb P_{<\kappa} = \mathbb P_\infty \cap V_\kappa$.
\end{definition*}

\begin{theorem}[Woodin]
Suppose there is a proper class of completely J\'onsson cardinals, and $G \subseteq \mathbb P_\infty$ is generic over $V$.  Then there is an amenable elementary embedding $j : V \to V[G]$ with unboundedly many regular fixed points.
\end{theorem}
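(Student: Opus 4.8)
The plan is to take $j$ to be the generic ultrapower of $V$ by $G$ and to show that its regular fixed points occur at (unboundedly many of) the completely J\'onsson cardinals.

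\emph{The ultrapower.} First I would form $M=\Ult(V,G)$, whose elements are the classes $[f]_G$ of functions $f\in V$ with stationary domain, where $[f]_G\in[g]_G$ iff $\{x:f(x)\in g(x)\}\in G$ and $[f]_G=[g]_G$ iff $\{x:f(x)=g(x)\}\in G$, and where $j(y)=[c_y]_G$ for $c_y$ the constant function with value $y$. By the standard theory of the stationary tower (\cite{MR2069032}), the proper class of completely J\'onsson cardinals ensures that the generic \L o\'s theorem holds, that $M$ is well-founded --- so that I may identify it with its transitive collapse --- and that $j$ is elementary and nontrivial (indeed $\crit(j)=\omega_1$). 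I would also record the fundamental correspondence $a\in G\iff j[\bigcup a]\in j(a)$, in which $j[\bigcup a]$ is the element $[\mathrm{id}]_G\in M$ represented by the map $x\mapsto x$ on $a$.

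\emph{Amenability and the target.} For amenability, given $x\in V$ the usual lifting of a condition to a larger support shows it is dense to put into $G$ some $a$ with $x\subseteq\bigcup a$; then $j[x]=j[\bigcup a]\cap j(x)$ is an intersection of two members of $M$ and so lies in $M$. To see that the target is all of $V[G]$, I would invoke the standard closure analysis: the proper class of completely J\'onsson cardinals makes $M$ closed in $V[G]$ under arbitrary set-indexed sequences of its elements, and a transitive class that contains every ordinal and is closed under arbitrary sequences must be all of $V[G]$ (by induction on rank). Hence $j:V\to V[G]$ is amenable.

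\emph{The regular fixed points.} This is the heart of the matter. For a completely J\'onsson $\kappa$ put $b_\kappa=\{M\subseteq V_\kappa:|M\cap\kappa|=\kappa\}$; then $\bigcup b_\kappa=V_\kappa$, and since $\kappa$ is regular, $|M\cap\kappa|=\kappa$ is equivalent to $\otp(M\cap\kappa)=\kappa$. I would first show that for each ordinal $\gamma_0$ it is dense to force $b_\kappa\in G$ for some completely J\'onsson $\kappa>\gamma_0$: given a condition $c$, choose a completely J\'onsson $\kappa>\gamma_0$ with $c\in V_\kappa$ and set $a=\{M\subseteq V_\kappa:M\cap\bigcup c\in c\text{ and }|M\cap\kappa|=\kappa\}$. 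Then $a\leq c$ and $a\leq b_\kappa$ by inspection, and --- this is exactly where the hypothesis is used --- applying the completely J\'onsson property of $\kappa$ to the stationary set $c$ shows that $a$ is stationary, hence a genuine condition. By genericity, $b_\kappa\in G$ for unboundedly many completely J\'onsson $\kappa$. I would then show that $b_\kappa\in G$ forces $j(\kappa)=\kappa$. Writing $s=j[V_\kappa]=[\mathrm{id}]_G\in M$ and $f(M)=\otp(M\cap\kappa)$, the generic \L o\'s theorem gives $[f]_G=j(f)(s)=\otp^M\!\big(j[V_\kappa]\cap j(\kappa)\big)=\otp^M(j[\kappa])=\kappa$, the last step because $j\restriction\kappa$ is an order isomorphism onto $j[\kappa]$. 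But $f$ is constantly $\kappa$ on $b_\kappa\in G$, so $[f]_G=j(\kappa)$; hence $j(\kappa)=\kappa$. Finally $\kappa$ is inaccessible in $V$, so by elementarity $j(\kappa)=\kappa$ is inaccessible, in particular regular, in $M=V[G]$. This yields unboundedly many regular fixed points, completing the argument.

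\emph{Main obstacle.} I expect the crux to be the density claim: its whole content is that the refined condition $a$ stays stationary, and this is precisely the completely J\'onsson hypothesis at $\kappa$. Paired with the seed computation $[f]_G=\otp^M(j[\kappa])=\kappa$, it is what pins $j(\kappa)$ down to $\kappa$. The ambient facts --- well-foundedness of the ultrapower and the identification $M=V[G]$ --- are where the global large-cardinal assumption is consumed, but these belong to the standard theory of the stationary tower and I would simply cite them.
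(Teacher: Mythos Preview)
The paper does not actually prove this theorem; it is stated as a result of Woodin and the reader is referred to \cite{MR2069032} for the background. So there is no ``paper's own proof'' to compare against.

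That said, your proposal is the standard argument and is essentially correct. The density computation for $b_\kappa$ is exactly the content of the completely J\'onsson hypothesis, and your seed calculation $[f]_G=\otp^M(j[\kappa])=\kappa$ is the right way to pin down $j(\kappa)$. One small inaccuracy: you assert $\crit(j)=\omega_1$, but the critical point depends on $G$; as the paper notes in Fact~\ref{los}, $\kappa=\crit(j)$ iff $\{z\subseteq\kappa:z\cap\kappa\in\kappa\}\in G$, and nothing forces this to hold at $\omega_1$ unless such a condition is in $G$. This does not affect the rest of your argument. Your sketch that $M=V[G]$ via closure under arbitrary sequences is also the right idea but compresses a nontrivial fact (that the proper class of completely J\'onsson cardinals yields $M^{<\kappa}\cap V[G]\subseteq M$ cofinally, hence $M=V[G]$); since you explicitly flag this as a citation to the standard theory, that is fine.
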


The above theorem immediately implies that if there is a countable transitive model $M_0$ of height $\delta$ with a proper class of completely J\'onsson cardinals, then $\mathcal A_\delta$ contains a subcategory isomorphic to the linear order $\omega$.  Given $M_n$, we can take a $\mathbb P_\infty$-generic $G_n$ over $M_n$, which gives us an amenable $j : M_n \to M_{n+1} = M_n[G_n]$.  In order to construct more complicated subcategories, we use the following to determine the action of the maps:

\begin{fact}
\label{los}
Suppose $G \subseteq \mathbb P_\infty$ and $j : V \to V[G]$ are as above.  Suppose $\varphi(x_0,\dots,x_n,y)$ is a formula,  $a_0,\dots,a_n,b \in V$, and $b$ is transitive.  Then 
$$V[G] \models \varphi(j(a_0),\dots,j(a_n),j[b]) \Leftrightarrow \{ z \subseteq b : V \models \varphi(a_0,\dots,a_n,z) \} \in G.$$
Consequently, $\kappa = \crit(j) \Leftrightarrow \{ z \subseteq \kappa : z \cap \kappa \in \kappa \} \in G$.
\end{fact}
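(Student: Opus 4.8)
The plan is to prove the displayed biconditional as a special case of the generic Łoś-type theorem for stationary tower forcing, which is the standard tool for analyzing embeddings $j : V \to V[G]$ arising from a generic $G \subseteq \mathbb P_\infty$. First I would recall how such an embedding is defined. For a stationary set $a \in G$ with $\bigcup a = b$, one thinks of $a$ as coding a ``generic ultrafilter'' concentrating on elementary-like substructures $x$ of $b$, and the embedding is defined on the generic ultrapower so that $j[b]$ is the canonical generic element represented by the identity function on $a$. The key definitional fact is precisely that $j[b]$ behaves as the ``generic point'' sitting inside $j(b)$, and membership statements about $j[b]$ are decided by which stationary sets lie in $G$.

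The main step is the Łoś-style computation: for a formula $\varphi(x_0,\dots,x_n,y)$ and parameters $a_0,\dots,a_n,b \in V$ with $b$ transitive, one shows
$$V[G] \models \varphi(j(a_0),\dots,j(a_n),j[b]) \iff \{ z \subseteq b : V \models \varphi(a_0,\dots,a_n,z) \} \in G.$$
I would prove this by the usual induction on the complexity of $\varphi$, with atomic, Boolean, and quantifier cases. The genericity of $G$ (its meeting every dense set, equivalently every maximal antichain) is what drives the existential quantifier case: to witness $\exists u\, \psi$ in $V[G]$ one uses a dense set of conditions forcing a particular witness, and the definition of the order $a \le b$ on $\mathbb P_\infty$ guarantees that refining to such conditions corresponds exactly to shrinking the relevant stationary set inside $G$. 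Since all of this is the content of the background material cited in \cite{MR2069032}, I would simply invoke that development rather than redo the induction in full, noting that the equivalence is the defining property of the generic embedding.

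For the ``consequently'' clause, I would specialize the main equivalence. Take $b = \kappa$ (an ordinal, hence transitive) and recall that $\crit(j) = \kappa$ means that $j$ fixes every ordinal below $\kappa$ but moves $\kappa$; since $j[\kappa] \subseteq j(\kappa)$ is the generic point and $\sup j[\kappa] = \kappa$ exactly when $j \restriction \kappa = \mathrm{id}$, the statement ``$\crit(j) = \kappa$'' is equivalent to the assertion that $j[\kappa] = \kappa$ as a subset of $j(\kappa)$, i.e. that the generic point $j[\kappa]$ equals $\kappa$ itself as an element. Expressed via the main equivalence with the formula saying ``$z$ is an ordinal equal to $\sup z$'' (equivalently $z \cap \kappa \in \kappa$ picks out the condition that the generic point meets $\kappa$ correctly), this says precisely that $\{ z \subseteq \kappa : z \cap \kappa \in \kappa \} \in G$.

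I expect the main obstacle to be stating the critical-point reformulation cleanly, since it requires unwinding what $j[b]$ means as an element of $V[G]$ and matching the set-theoretic condition $z \cap \kappa \in \kappa$ to the correct first-order property $\varphi$. The Łoś induction itself is routine given the cited framework, so the delicate point is purely bookkeeping: verifying that $\{ z \subseteq \kappa : z \cap \kappa \in \kappa \}$ is the stationary set whose membership in $G$ encodes that $\kappa$ is genuinely moved while everything below is fixed. Once that identification is made, the ``consequently'' line follows immediately by substituting this particular $\varphi$ into the main biconditional.
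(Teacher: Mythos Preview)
The paper gives no proof of this statement; it is recorded as a \emph{Fact} with the background material deferred to \cite{MR2069032}. Your plan to invoke the standard \L o\'s-type theorem for the stationary tower from that reference is therefore exactly how the paper treats the main biconditional, and there is nothing further to compare on that point.

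Your derivation of the ``consequently'' clause, however, is imprecise in a way that matters. You assert that $\crit(j)=\kappa$ is equivalent to $j[\kappa]=\kappa$, but $j[\kappa]=\kappa$ only says $\crit(j)\geq\kappa$; it does not rule out $j(\kappa)=\kappa$. The correct unwinding is that $\crit(j)=\kappa$ is equivalent to $j[\kappa]\in j(\kappa)$. One direction: if $\crit(j)=\kappa$ then $j[\kappa]=\kappa$ and $j(\kappa)>\kappa$, so $j[\kappa]\in j(\kappa)$. Conversely, if $\crit(j)<\kappa$ then $\crit(j)\notin j[\kappa]$ while $j(\crit(j))\in j[\kappa]$, so $j[\kappa]$ is not transitive and hence not an ordinal, so $j[\kappa]\notin j(\kappa)$; and if $\crit(j)>\kappa$ then $j[\kappa]=\kappa=j(\kappa)$, so again $j[\kappa]\notin j(\kappa)$. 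Now apply the main biconditional with $b=\kappa$, $a_0=\kappa$, and $\varphi(x,y)\equiv(y\cap x\in x)$: since $j[\kappa]\subseteq j(\kappa)$, the left side reads $j[\kappa]\in j(\kappa)$, and the right side is exactly $\{z\subseteq\kappa:z\cap\kappa\in\kappa\}\in G$. With this correction your argument is complete; the suggested formula ``$z$ is an ordinal equal to $\sup z$'' is not the right one and should be replaced by the membership condition just described.
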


To construct subcategories containing transfinite chains, we use our latitude in determining critical points so that the equivalence classes that make up the ordinals of the direct limit are all eventually ``frozen.''

\begin{lemma}
\label{omegalimit}
Suppose the following:
\begin{enumerate}
\item $\la \delta_n : n < \omega \ra$ is an increasing sequence of ordinals.
\item $\la M_n : n <\omega \ra$ is a sequence of transitive ZFC models, each of height $\delta = \sup_{n<\omega} \delta_n$.
\item $\la j_{m,n} : M_m \to M_n : m \leq n < \omega \ra$ is a commuting system of elementary embeddings.
\item For all $m \leq n$, $j_{m,n}(\delta_n)<\crit(j_{n,n+1}) $. 
\end{enumerate}
Then the direct limit is isomorphic to a transitive model $M_\omega \subseteq \bigcup_{n<\omega} M_n$. If each $j_{m,n}$ is amenable, then so is each direct limit map $j_{n,\omega}$, and thus $M_\omega = \bigcup_{n<\omega} M_n$.
\end{lemma}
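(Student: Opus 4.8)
The plan is to show that hypothesis~(4) forces every element to \emph{freeze}: for each $a \in M_n$, the sequence $\langle j_{n,N}(a) : N \geq n \rangle$ is eventually constant, with a common stable value that is a genuine set. Granting this, I will identify the abstract direct limit with the transitive set of these stable values, taking the map $F([(n,a)]) = (\text{stable value of }a)$ as the transitive collapse. I expect the crux of the argument to be the freezing induction for ordinals, since a single bound propagated through all later stages is what simultaneously delivers well-foundedness, the containment $M_\omega \subseteq \bigcup_n M_n$, and the image identity that powers the amenability part.

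First I would establish freezing for ordinals. Fix $\alpha \in \ord^{M_n} = \delta$ and, using $\delta = \sup_k \delta_k$, choose $n' \geq n$ with $\alpha < \delta_{n'}$. Writing $\alpha_{n'} = j_{n,n'}(\alpha)$, hypothesis~(4) with $m = n$ gives $\alpha_{n'} = j_{n,n'}(\alpha) < j_{n,n'}(\delta_{n'}) < \crit(j_{n',n'+1})$, so $j_{n',n'+1}(\alpha_{n'}) = \alpha_{n'}$. I then run an induction on $N \geq n'$ showing $j_{n,N}(\alpha) = \alpha_{n'}$ and $\alpha_{n'} < \crit(j_{N,N+1})$: the step uses hypothesis~(4) once more, since $\crit(j_{N+1,N+2}) > j_{n,N+1}(\delta_{N+1}) > j_{n,N+1}(\alpha) = \alpha_{n'}$, where $\alpha < \delta_{N+1}$ and $j_{n,N+1}(\alpha) = \alpha_{n'}$ by the previous step. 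Thus each ordinal of the direct limit stabilizes to a genuine ordinal $< \delta$, and distinct ordinals get distinct stable values in an order-preserving way (since each $j_{n,N}$ is monotone and injective on ordinals), so $F$ order-embeds $\ord^{M_\omega}$ into $\delta$. Because every set in a model of $\ZF$ has an ordinal rank, well-foundedness of $\ord^{M_\omega}$ yields well-foundedness of the whole direct limit.

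Freezing for an arbitrary $a \in M_n$ then follows by applying the ordinal case to $\rho = \rank^{M_n}(a)$: once $j_{n,n'}(a)$ has rank $j_{n,n'}(\rho) < \crit(j_{n',n'+1})$, it lies in $V^{M_{n'}}_{\crit(j_{n',n'+1})}$ and is therefore fixed, literally unchanged, by every later map, as is each of its elements. Hence $F$ is a well-defined $\in$-isomorphism of the direct limit onto a transitive subset $M_\omega \subseteq \bigcup_n M_n$ (transitivity because any element of a frozen set is itself frozen), and by uniqueness of the Mostowski collapse this $F$ is the collapse. Since the limit maps are elementary, $M_\omega \models \ZFC$, and as the stable ordinals are cofinal in $\delta$ while bounded by $\delta$, the model $M_\omega$ has height $\delta$.

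For amenability, fix $x \in M_n$ and let $n'$ be a stage at which $x$, and hence every element of $x$, has frozen. Then $j_{n,\omega}[x] = \{\, j_{n,n'}(y) : y \in x \,\} = j_{n,n'}[x]$, which lies in $M_{n'}$ by amenability of $j_{n,n'}$; moreover it is a subset of the frozen set $j_{n,n'}(x)$, so it has rank below $\crit(j_{n',n'+1})$ and is therefore itself frozen, whence $j_{n,\omega}[x] \in M_\omega$. Thus each $j_{n,\omega}$ is amenable. Finally, invoking the standard fact that an amenable embedding $j_{n,\omega} : M_n \to M_\omega$ forces $M_n \subseteq M_\omega$ (collapse $j_{n,\omega}[\trcl(\{a\})] \in M_\omega$ inside $M_\omega$ to recover each $a \in M_n$), I obtain $\bigcup_n M_n \subseteq M_\omega$, and combined with the reverse inclusion already established this gives $M_\omega = \bigcup_n M_n$.
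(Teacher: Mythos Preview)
Your argument is correct and follows essentially the same approach as the paper: both proofs show that hypothesis~(4) eventually freezes every ordinal (and hence every set, via rank), so the direct limit is well-founded of height $\delta$ and consists precisely of these stable values in $\bigcup_n M_n$; the amenability step in both cases is exactly the observation that $j_{n,\omega}[x] = j_{n',\omega}(j_{n,n'}[x])$ once $n'$ is chosen large enough that $j_{n,n'}[x]$ sits below the later critical points. Your write-up is somewhat more explicit (naming the collapse map $F$, spelling out the freezing induction), but there is no substantive difference in method.
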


\begin{proof}
Recall that the direct limit is defined as the set of equivalence classes $[n,x]$, where $x \in M_n$, and we put $[n,x] \sim [m,y]$ for $n \leq m$ when $j_{n,m}(x) = y$.  The membership relation is defined similarly: for $n\leq m$, $[n,x]$ ``$\in$'' $[m,y]$ when $j_{n,m}(x) \in y$.  
Suppose $\alpha < \delta$ and $n<\omega$.  Let $m \geq n$ be such that $\delta_m \geq \alpha$, and let $\beta = j_{n,m}(\alpha)$.  Then for all $k > m$, $\crit(j_{m,k})>\beta$.  Thus if $[k,\gamma] < [n,\alpha]$ in the direct limit, then there is some $\xi <\beta$ such that $[k,\gamma] \sim [m,\xi]$.  Thus the set of predecessors of any ordinal in the direct limit is isomorphic to an ordinal below $\delta$, so the direct limit can be identified with a transitive model $M_\omega$ of height $\delta$.  To show $M_\omega \subseteq \bigcup_n M_n$, let $x \in M_\omega$ and let $n,y$ be such that $x = j_{n,\omega}(y)$.  Let $m\geq n$ be such that $\rank(y) <\delta_m$.  $\crit(j_{m,\omega}) > \rank(j_{n,m}(y))$, so $x = j_{m,\omega} \circ j_{n,m}(y) = j_{n,m}(y) \in M_m$.

Now suppose each $j_{m,n}$ is amenable.  Let $x \in M_n$.  Let $m\geq n$ be such that $\rank(x) <\delta_m$.  The critical point of $j_{m,k}$ is greater than $j_{n,m}(\delta_m)$ for all $k>m$.  By amenability, $j_{n,m}[x] \in (V_{j_{n,m}(\delta_m)})^{M_m}$.  But $j_{n,\omega}[x] = j_{m,\omega}[j_{n,m}[x]] =  j_{m,\omega}(j_{n,m}[x]) \in M_\omega$.  \end{proof}

\begin{corollary}
\label{anycountable}
Suppose $M$ is a countable transitive model of \ZFC of height $\delta$ satisfying that there is a proper class of completely J\'onsson cardinals.  Let $\xi$ be any countable ordinal.  Then there is a subcategory of $\mathcal A_\delta$ isomorphic to $\xi$.
\end{corollary}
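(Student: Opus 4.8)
The plan is to build, by transfinite recursion on $\xi$, a commuting system of amenable embeddings indexed by $\xi$ whose direct limits at limit stages are controlled by Lemma \ref{omegalimit}. The Woodin stationary tower theorem gives us the successor steps: whenever we have a \ZFC model $M_\eta$ of height $\delta$ with a proper class of completely J\'onsson cardinals, we may take a $\mathbb P_\infty$-generic $G_\eta$ over $M_\eta$ and obtain an amenable $j_{\eta,\eta+1} : M_\eta \to M_{\eta+1} = M_\eta[G_\eta]$. Since forcing preserves the height $\delta$ and the existence of a proper class of completely J\'onsson cardinals, the successor model is again a legitimate object to continue from. The maps at successor stages compose with the already-constructed system to give $j_{\zeta,\eta+1} = j_{\eta,\eta+1} \circ j_{\zeta,\eta}$ for $\zeta \le \eta$.

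The limit stages are where Lemma \ref{omegalimit} does the work, but that lemma is stated only for limits of cofinality $\omega$, so I first reduce to the case of $\omega$-limits. For a general countable limit $\lambda < \xi$, fix an increasing cofinal sequence $\la \lambda_k : k < \omega \ra$ from below $\lambda$; the direct limit of the system up to $\lambda$ is then the $\omega$-limit of the subsystem along $\la \lambda_k \ra$, so it suffices to verify the hypotheses of Lemma \ref{omegalimit} for that cofinal subsystem. The key hypothesis to arrange is item (4): that $\crit(j_{\lambda_k, \lambda_{k+1}}) > j_{\lambda_j, \lambda_k}(\delta_{\lambda_k})$ for a suitable increasing sequence of ordinals $\delta_{\lambda_k} < \delta$ with supremum $\delta$. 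This is exactly where Fact \ref{los} is used: at each successor step we have the freedom to choose which condition to place in the generic, hence to push the critical point of $j_{\eta,\eta+1}$ above any prescribed ordinal below $\delta$ that has been named so far. Concretely, I would enumerate a cofinal $\la \delta_k : k < \omega \ra$ in $\delta$ in advance and, when constructing $j_{\eta,\eta+1}$, use Fact \ref{los} to force $\crit(j_{\eta,\eta+1})$ above the image $j_{0,\eta}(\delta_k)$ for the appropriate $k$. Lemma \ref{omegalimit} then yields a transitive direct limit $M_\lambda = \bigcup_{k} M_{\lambda_k}$ of height $\delta$ with amenable limit maps $j_{\lambda_k, \lambda}$, and composing gives amenable $j_{\zeta,\lambda}$ for all $\zeta < \lambda$.

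The main obstacle is propagating the inductive hypothesis through limit stages: I must ensure that $M_\lambda$ again satisfies ``there is a proper class of completely J\'onsson cardinals,'' so that the recursion can continue past $\lambda$. This follows by elementarity, since each $j_{\lambda_k, \lambda}$ is elementary and $M_{\lambda_0}$ satisfies the statement, but one must check the arrangement of critical points guarantees that the completely J\'onsson cardinals are not all eventually collapsed or bounded below $\delta$ — the condition $\crit(j_{\eta,\eta+1}) > j_{0,\eta}(\delta_k)$ ensures the critical sequence is cofinal in $\delta$, so no tail of the cardinals is moved and the class remains proper in the limit. A secondary bookkeeping point is that the whole construction must take place inside $V$ (or a sufficiently closed outer model) so that the requisite generics over the countable models $M_\eta$ exist; since each $M_\eta$ is countable and $\mathbb P_\infty^{M_\eta}$ has only countably many dense sets to meet, such generics are available, and the entire $\xi$-indexed system — with $\xi$ countable — can be assembled. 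The resulting assignment $\eta \mapsto M_\eta$ with arrows $j_{\zeta,\eta}$ is then an honest subcategory of $\mathcal A_\delta$ isomorphic to $\xi$, honesty being automatic for a chain since the embeddings are strictly increasing on the critical sequence and hence no ``backward'' arrows exist in $\mathcal E_\delta$.
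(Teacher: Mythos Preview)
Your ingredients are right --- stationary tower forcing for successor steps, Lemma~\ref{omegalimit} for limit steps, and Fact~\ref{los} to control critical points --- but the organization has a genuine gap. You propose a single transfinite recursion of length $\xi$, choosing at each successor stage $\eta$ a critical point for $j_{\eta,\eta+1}$ that is ``above $j_{0,\eta}(\delta_k)$ for the appropriate $k$.'' The phrase ``appropriate $k$'' is where the difficulty hides. At stage $\eta$ you must make a choice that simultaneously serves \emph{every} limit ordinal $\lambda$ above $\eta$: for each such $\lambda$ (with its own fixed cofinal $\omega$-sequence $\langle\lambda_k\rangle$) there is a particular index $k$ with $\lambda_k \le \eta < \lambda_{k+1}$, and Lemma~\ref{omegalimit} demands $\crit(j_{\eta,\eta+1}) > j_{\lambda_j,\lambda_k}(\delta_k)$ for all $j\le k$. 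Since there may be infinitely many limits above $\eta$, you must argue that these infinitely many requirements are jointly satisfiable by a single ordinal below $\delta$ --- equivalently, that the relevant indices $k$ are bounded at each $\eta$. That is a real combinatorial coordination problem on $\xi$, and you have not addressed it.

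The paper avoids this entirely by strengthening the induction hypothesis: one proves by induction on $\xi$ that for any starting model $M_0$ and any prescribed $\gamma<\delta$, there is a chain $\langle j_{\alpha,\beta} : \alpha\le\beta\le\xi\rangle$ with $\crit(j_{0,\xi}) > \gamma$. At a limit $\xi = \sup_n \xi_n$, one invokes the hypothesis on each shorter interval $[\xi_n,\xi_{n+1}]$ to obtain a block whose \emph{overall} critical point exceeds $\max_{m\le n} j_{\xi_m,\xi_n}(\delta_n) + \gamma$; this packages all the nested-limit bookkeeping inside each block into a single application of the induction hypothesis, and then Lemma~\ref{omegalimit} applies directly to the $\omega$-sequence of blocks. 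The extra parameter $\gamma$ is precisely what lets the induction go through.

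Two minor points: the corollary does not assert honesty, so your final paragraph is unnecessary; and preservation of ``proper class of completely J\'onsson cardinals'' at limit stages is immediate from elementarity of the limit maps, so you need not argue about critical sequences being cofinal for that purpose.
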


\begin{proof}
We argue by induction on $\xi$, using the following stronger hypothesis:  If $M_0$ is a countable transitive model of \ZFC + ``There is a proper class of completely J\'onsson cardinals'' of height $\delta$, and $\gamma <\delta$, then there is a linear system of amenable embeddings $\la j_{\alpha,\beta} : M_\alpha \to M_\beta : \alpha \leq \beta \leq \xi \ra$ with $\crit(j_{0,\xi}) > \gamma$, each $M_\alpha$ having the same ordinals $\delta$.  Suppose this is true for all $\xi' < \xi$.  If $\xi =\xi'+1$, then we may continue to $\xi$ by taking the final embedding $j_{\xi',\xi}$ to be a stationary tower embedding with large enough critical point.  Suppose $\xi$ is a limit ordinal, and let $\xi = \sup_{n<\omega} \xi_n$ and $\delta = \sup_{n<\omega} \delta_n$, where the $\xi_n$ and $\delta_n$ are increasing.  Let $\gamma<\delta$.  Inductively choose for each $n<\omega$, a chain of amenable embeddings $\la j_{\alpha,\beta} : M_\alpha \to M_\beta : \xi_n \leq \alpha \leq\beta\leq\xi_{n+1} \ra$, such that for all $m\leq n <\omega$, $\crit(j_{\xi_n,\xi_{n+1}}) > j_{\xi_m,\xi_n}(\delta_n)+\gamma$.  Then by Lemma \ref{omegalimit}, the direct limit $M_\xi$ is well-founded of height $\delta$, and each direct limit map $j_{\xi_n,\xi} : M_{\xi_n} \to M_\xi$ is amenable.  For $\alpha<\xi$, let $j_{\alpha,\xi} : M_\alpha \to M_\xi$ be $j_{\xi_n,\xi} \circ j_{\alpha,\xi_n}$, where $n$ is least such that $\xi_n \geq \alpha$.  For $\alpha<\beta<\xi$ coming from different intervals, let $j_{\alpha,\beta} = j_{\xi_m,\beta} \circ j_{\xi_{m-1},\xi_m} \circ \dots \circ j_{\xi_n,\xi_{n+1}} \circ j_{\alpha,\xi_n}$, where $n$ is least such $\xi_n \geq \alpha$ and $m$ is greatest such that $\xi_m \leq \beta$.  These are amenable since they are the composition of amenable maps.  It is easy to check that $\crit(j_{0,\xi}) > \gamma$.
\end{proof}

The countability of $\xi$ above is optimal:

\begin{theorem}
\label{nolongchains}
For any ordinal $\delta$, $\mathcal E_\delta$ contains no subcategory isomorphic to $\delta^+$.  Moreover, if $T$ is a tree of height $\delta^+$ such that for every $t \in T$ and every $\alpha<\delta^+$, there is $s \geq t$ of height $\geq \alpha$, and $T$ is isomorphic to a subcategory of $\mathcal E_\delta$, then forcing with $T$ collapses $\delta^+$.
\end{theorem}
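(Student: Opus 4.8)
The plan is to argue by contradiction from a hypothetical chain and extract a strictly descending sequence of ordinals. Suppose $\langle M_\alpha : \alpha < \delta^+\rangle$ together with a commuting system $\langle j_{\alpha,\beta} : M_\alpha \to M_\beta : \alpha \le \beta < \delta^+\rangle$ forms a subcategory of $\mathcal E_\delta$ isomorphic to the linear order $\delta^+$; then $j_{\alpha,\alpha} = \id$ and each $j_{\alpha,\beta}$ with $\alpha < \beta$ is nontrivial, hence moves some ordinal and so has a critical point $\crit(j_{\alpha,\beta}) < \delta$. First I would record the composition law for critical points: since $j_{\alpha,\gamma} = j_{\beta,\gamma} \circ j_{\alpha,\beta}$, and elementary embeddings are the identity below their critical point and strictly increasing on ordinals, one gets $\crit(j_{\alpha,\gamma}) = \min(\crit(j_{\alpha,\beta}),\crit(j_{\beta,\gamma}))$. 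Writing $c(\xi) = \crit(j_{\xi,\xi+1})$, this yields $\crit(j_{\alpha,\beta}) = \min_{\alpha \le \xi < \beta} c(\xi)$ for $\alpha < \beta$.

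The next step is to locate a single critical value attained cofinally. Consider $m_\alpha = \inf_{\xi \ge \alpha} c(\xi) < \delta$; this is nondecreasing in $\alpha$, and since $\delta^+$ is regular with $\delta^+ > \delta$, a nondecreasing function from $\delta^+$ into $\delta$ is eventually constant (it takes at most $\delta$ values, whose level sets are intervals, and $\le\delta$ bounded intervals cannot cover the regular cardinal $\delta^+$). Let $\lambda$ be its eventual value and fix $\alpha^*$ past which $m_\alpha = \lambda$. Then $c(\xi) \ge \lambda$ for all $\xi \ge \alpha^*$, and because each tail-infimum equals $\lambda$ it is attained, so $S = \{\xi \ge \alpha^* : c(\xi) = \lambda\}$ is unbounded in $\delta^+$. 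Enumerating $S$ as $\langle s_\eta : \eta < \delta^+\rangle$ and setting $N_\eta = M_{s_\eta}$, $i_{\eta,\zeta} = j_{s_\eta,s_\zeta}$, the composition law gives $\crit(i_{\eta,\zeta}) = \lambda$ for all $\eta < \zeta$: along this cofinal subchain every map has critical point exactly $\lambda$.

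Now I would manufacture the descending sequence. For each $\eta$ the map $\zeta \mapsto i_{\eta,\zeta}(\lambda)$ is nondecreasing and bounded by $\delta$, hence (again by regularity of $\delta^+$) eventually constant with value $\theta_\eta := \sup_{\zeta > \eta} i_{\eta,\zeta}(\lambda) < \delta$. I claim $\langle \theta_\eta : \eta < \delta^+\rangle$ is strictly decreasing: given $\eta < \eta'$, choose $\zeta$ so large that $\theta_\eta = i_{\eta,\zeta}(\lambda)$ and $\theta_{\eta'} = i_{\eta',\zeta}(\lambda)$; then $\theta_\eta = i_{\eta,\zeta}(\lambda) = i_{\eta',\zeta}(i_{\eta,\eta'}(\lambda))$, and since $\crit(i_{\eta,\eta'}) = \lambda$ we have $i_{\eta,\eta'}(\lambda) > \lambda$, so applying the order-preserving $i_{\eta',\zeta}$ gives $\theta_\eta > i_{\eta',\zeta}(\lambda) = \theta_{\eta'}$. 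A strictly descending $\omega$-sequence of ordinals is impossible, so we are done. The crux of the whole argument is the stabilization of the previous paragraph: without forcing the critical points of a cofinal subchain to coincide, the images $i_{\eta,\eta'}(\lambda)$ could equal $\lambda$ and $\theta_\eta$ would only be nonincreasing, giving no contradiction.

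For the moreover clause the key observation is that regularity of $\delta^+$ entered only in the two stabilization steps, which in fact need nothing more than $\cf((\delta^+)) > \delta$ in whatever model one works in, while the composition law, the strict decrease, and the appeal to well-foundedness are absolute. So let $T$ be a $\delta^+$-tree isomorphic to a subcategory of $\mathcal E_\delta$; by the first part $T$ has no cofinal branch in $V$. After pruning $T$ to the downward-closed set of nodes having extensions of unbounded height — which preserves both height $\delta^+$ and the subcategory — forcing with $T$ by reverse extension adds a cofinal generic branch $b$ of order type $(\delta^+)^V$, since in a tree compatibility is comparability and every remaining node has arbitrarily tall extensions. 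In $V[b]$ the models and embeddings indexed along $b$ form a commuting chain of length $(\delta^+)^V$, so running the argument above inside $V[b]$ would produce a descending sequence of ordinals — an outright contradiction — unless the stabilization fails there, i.e. unless $\cf^{V[b]}((\delta^+)^V) \le \delta$. But such a cofinality exhibits $(\delta^+)^V$ as the union of $\le\delta$ ordinals each of $V[b]$-cardinality $\le\delta$, whence $\lvert(\delta^+)^V\rvert^{V[b]} \le \delta$; that is, $\delta^+$ is collapsed. The only delicate points to verify carefully are that the pruned generic branch is genuinely cofinal (handled by the pruning step) and that the two stabilization facts go through verbatim from the hypothesis $\cf > \delta$ rather than full regularity.
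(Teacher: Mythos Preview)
Your proof is correct, but it follows a genuinely different route from the paper's for the first clause. The paper builds a continuous increasing sequence of ``eventual fixed point'' sets $F_i \subseteq \delta$: starting from $F_0$, it finds $\xi_{i+1}$ large enough that $j_{\xi_i,\beta}(\alpha)$ has stabilized for every $\alpha < \delta$, and sets $F_{i+1} = j_{\xi_i,\xi_{i+1}}[\delta]$. These sets increase inside $\delta$, so they stabilize at some stage $\gamma < \delta^+$, and then every $j_{\xi_\gamma,\beta}$ is the identity, contradicting that the $M_\alpha$ are distinct. Your argument instead isolates a single ordinal: you pass to a cofinal subchain on which every map has critical point exactly $\lambda$, and then observe that the eventual values $\theta_\eta = \lim_\zeta i_{\eta,\zeta}(\lambda)$ are strictly decreasing, which is impossible. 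The paper's approach yields a bit more information (the chain literally stabilizes, not merely fails to exist), while yours is pleasantly self-contained, needing only the composition law for critical points and two invocations of ``nondecreasing into $\delta$ implies eventually constant.'' For the moreover clause both arguments are essentially the same: prune $T$ so every node has tall extensions, force a cofinal branch, and run the first part in the extension. Your phrasing in terms of $\cf^{V[b]}((\delta^+)^V) \le \delta$ is a minor variant of the paper's ``$\delta^+$ is preserved, contradiction''; the derivation of the collapse from the cofinality drop is correct since every ordinal below $(\delta^+)^V$ already has $V$-cardinality $\le \delta$.
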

\begin{proof}
Suppose $\{ M_\alpha : \alpha < \delta^+ \}$ is set of objects in $\mathcal E_\delta$, and each pair $\alpha < \beta$ is assigned an embedding $j_{\alpha,\beta} : M_\alpha \to M_\beta$, such that for $\alpha < \beta < \gamma$, $j_{\alpha,\gamma} = j_{\beta,\gamma} \circ j_{\alpha,\beta}$.

We select continuous increasing sequences of ordinals, $\la \xi_i : i <\delta^+ \ra$, and subsets of $\delta$, $\la F_i : i < \delta^+\ra$. 
Let $\xi_0 = 0$ and let $F_0$ be the set of ordinals that are fixed by every map $j_{\alpha,\beta}$.  Given $\xi_i$, the sequence $\la j_{\xi_i,\beta}(\alpha) : \beta < \delta^+ \ra$ is nondecreasing.  Thus there is $\xi_{i+1}<\delta^+$ such that $j_{\xi_i,\xi_{i+1}}(\alpha) = j_{\xi_i,\beta}(\alpha)$ for all $\beta \geq \xi_{i+1}$, and all $\alpha<\delta$. Let $F_{i+1} = j_{\xi_i,\xi_{i+1}}[\delta]$.  Take unions at limits.  By induction, the sets $F_i$ are increasing, and for all $i<\delta^+$, $j_{\xi_i,\beta}(\alpha) = \alpha$ for $\alpha \in F_i$ and $\beta\geq\xi_i$.

Let $\gamma < \delta^+$ be the point at which the sets $F_i$ stabilize.  We claim that for all $i > \gamma$, $j_{\xi_\gamma,i}$ is the identity map and thus $M_i = M_{\xi_\gamma}$.  Otherwise, let $\alpha$ and $\eta$ be such that $j_{\xi_\gamma,\eta}(\alpha) > \alpha$. 
But then $j_{\xi_\gamma,\xi_{\gamma+1}}(\alpha) \in F_{\gamma+1} \setminus F_\gamma$,
a contradiction.  

To show the claim about $\delta^+$-trees, suppose on the contrary $T$ is a tree satisfying the hypothesis.  
If $b$ is $T$-generic branch over $V$, then $b$ has length $(\delta^+)^V$.  If $\delta^+$ is preserved, then in the extension, $b$ gives a chain of length $\delta^+$ in $\mathcal E_\delta$, contradicting what we have shown.
\end{proof}

To further explore the structure of these categories, we use countable transitive models $M \models \ZFC$ of some height $\delta$ which is the supremum of an increasing sequence $\la \delta_n : n<\omega \ra$ such that:
\begin{enumerate}
\item Each $\delta_n$ is a completely J\'onsson cardinal in $M$.
\item For each $n$, $V_{\delta_n}^M \prec M$.
\end{enumerate}
Let us call these \emph{good models}, and let us say an ordinal $\delta$ is good if there exists a good model of height $\delta$.  The existence of a good model follows from the existence a measurable cardinal.  Note that if $M$ is good, $N$ has the same ordinals, and $j : M \to N$ is elementary, then $\la j(\delta_n) : n <\omega \ra$ witnesses that $N$ is good.
Over good models, we can build generics for $\mathbb P_\infty$ in a way that allows precise control over what happens to the cardinals $\delta_n$:
  
\begin{lemma}
\label{controltower}
Let $f : \omega \to \omega$ be any increasing function, let $M$ be good with witnessing sequence $\la \delta_n : n < \omega \ra$, and let $\kappa_0<\kappa_1$ be regular cardinals of $M$ below $\delta_{f(0)}$.
There is $G \subseteq \mathbb P_\infty^M$ generic over $M$ such that if $j$ is the associated generic embedding, then
\begin{enumerate}
\item $\kappa_0 = \crit(j)$ and $j(\kappa_0) \geq \kappa_1$.
\item For all $n$, $\delta_{f(n)}$ is a fixed point of $j$.
\item For all $n$, if $f(n) < m < f(n+1)$,  
then $\delta_m$ has cardinality $\delta_{f(n)}$ in $M[G]$.
 \end{enumerate}
 \end{lemma}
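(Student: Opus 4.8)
The plan is to build $G$ as a descending $\omega$-chain of conditions meeting every dense class, exploiting the countability of $M$ to splice in, at prescribed stages, stationary sets that pin down the action of $j$ through Fact~\ref{los}. First I would enumerate in order type $\omega$ all dense subclasses of $\mathbb P_\infty$ that are definable over $M$ with parameters; since $M$ is countable there are only countably many. We then produce a descending sequence $a_0 \geq a_1 \geq \cdots$ in which cofinally many terms meet the successive dense classes, together with auxiliary ``control'' conditions realizing (1)--(3), and let $G$ be the filter they generate. The governing principle is that at each stage we are free to refine to any stronger stationary set, so we may always descend to a condition whose defining property forces the feature of $j$ we want; by Fact~\ref{los}, each such feature is equivalent to membership in $G$ of an explicit stationary set.

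For requirement (1), I would descend below a single stationary set $a^\ast$ with $\bigcup a^\ast = V^M_{\kappa_1+1}$ concentrating on those $z$ with $z\cap\kappa_0 \in \kappa_0$ and $|z|<\kappa_0$; this set is stationary by goodness of $M$, since the completely J\'onsson cardinals below $\delta_{f(0)}$ supply the requisite stationarity. Because $a^\ast$ refines $\{z : z\cap\kappa_0\in\kappa_0\}$, Fact~\ref{los} gives $\crit(j)=\kappa_0$. Moreover, for each ordinal $\beta<\kappa_1$ the function $z\mapsto \ot(z\cap\beta)$ has range contained in $\kappa_0$ (as $|z|<\kappa_0$), yet represents $\beta$ in the generic ultrapower, because $\beta\subseteq\bigcup a^\ast$ yields $\ot(j[\beta])=\beta$; hence $\beta<j(\kappa_0)$, and therefore $j(\kappa_0)\geq\kappa_1$.

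For requirements (2) and (3), I would interleave, for each $n$, a ``block'' condition $b_n$ with $\bigcup b_n = V^M_{\delta_{f(n+1)}}$ that encodes all the size constraints up to $\delta_{f(n+1)}$ at once. Let $\mu$ be the nondecreasing weight function with $\mu_{f(k)}=\delta_{f(k)}$ and $\mu_m=\delta_{f(k)}$ for $f(k)<m<f(k+1)$; by the stationarity remark applied to the sequence $\la \delta_i : i<f(n+1)\ra$ (valid by the complete J\'onsson-ness of its members), the set $b_n=\{z\prec V^M_{\delta_{f(n+1)}} : |z\cap\delta_i|=\mu_i \text{ for all } i<f(n+1)\}$, intersected below $\kappa_1$ with the trace condition of the previous paragraph, is stationary. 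Since the constraints for indices $i<f(k)$ are inherited under projection, we get $b_0\geq b_1\geq\cdots$ and $b_n\leq a^\ast$ automatically, so these cohere into one chain into which the dense-class refinements are spliced. The fullness $|z\cap\delta_{f(k)}|=\delta_{f(k)}$ makes $j[\delta_{f(k)}]$ cofinal in $j(\delta_{f(k)})$ of ordertype $\delta_{f(k)}$, and as $V^M_{\delta_{f(k)}}\prec M$ this forces $j(\delta_{f(k)})=\delta_{f(k)}$; the equal weights $|z\cap\delta_m|=|z\cap\delta_{f(k)}|=\delta_{f(k)}$ for $f(k)<m<f(k+1)$ yield, via Fact~\ref{los}, a surjection in $M[G]$ from $\delta_{f(k)}$ onto $\delta_m$, collapsing $\delta_m$ to cardinality $\delta_{f(k)}$.

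The main obstacle is the simultaneity: producing one descending sequence of genuine stationary conditions that fixes every boundary $\delta_{f(n)}$, collapses every interior $\delta_m$, and realizes $\crit(j)=\kappa_0$ with $j(\kappa_0)\geq\kappa_1$, all while meeting every dense class. The delicate point is that splicing an arbitrary dense-class refinement between $b_n$ and $b_{n+1}$ must not disturb the already-determined behaviour of $j$ on $V^M_{\delta_{f(n)}}$. I expect to handle this by a freezing argument in the spirit of Lemma~\ref{omegalimit}: the fullness of the traces below $\delta_{f(n)}$ should force every later condition to have critical point above $\delta_{f(n)}$, so that $j\restriction V^M_{\delta_{f(n)}}$ is fixed once $b_n$ enters the chain. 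Verifying that the prescribed-weight sets remain stationary after each refinement --- equivalently, that the block conditions are mutually compatible --- reduces once more to the completely J\'onsson stationarity of the $\delta_i$ guaranteed by goodness, which is precisely what the hypotheses furnish.
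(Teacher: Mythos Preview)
Your overall plan---build a descending $\omega$-sequence of stationary-tower conditions, interleaving explicit ``control'' conditions (pinning down the critical point, the fixed points, and the collapses via Fact~\ref{los}) with refinements meeting the countably many dense classes of $M$---is exactly the paper's approach. The gap is in how you make the two kinds of conditions cohere.

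You correctly sense that the interleaving is delicate, but you misdiagnose the difficulty. Your stated worry, that a dense-class refinement below $b_n$ might ``disturb the already-determined behaviour of $j$ on $V^M_{\delta_{f(n)}}$,'' is a non-issue: passing to a stronger condition never undoes what a weaker one forces. The real problem is forward-looking. If you refine $b_n$ to an arbitrary $c$ in the given dense class with no control over $\bigcup c$, then $c$ may have $\bigcup c$ of rank well above $\delta_{f(n+1)}$ and may concentrate on sets $z$ with $|z\cap\delta_{f(n+1)}|<\delta_{f(n+1)}$. Once that happens no further refinement can restore fullness at $\delta_{f(n+1)}$: for any $c'\leq c$ and $z'\in c'$ one has $z'\cap\bigcup c\in c$, and since $\delta_{f(n+1)}\subseteq\bigcup c$ this forces $|z'\cap\delta_{f(n+1)}|<\delta_{f(n+1)}$. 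Requirement~(2) is then unattainable. Your proposed ``freezing'' fix, modelled on Lemma~\ref{omegalimit}, protects the past rather than the future and does not touch this.

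The missing ingredient is the second clause of goodness, $V^M_{\delta_n}\prec M$, which you never invoke (you use only the completely J\'onsson clause). The paper enumerates the parameters so that the $n$th dense class $D_n$ is definable from a parameter lying in $V^M_{\delta_n}$; elementarity then guarantees that $D_{n+1}\cap V^M_{\delta_{f(n+1)}}$ is already dense in $\mathbb P_{<\delta_{f(n+1)}}$. Hence one can meet $D_{n+1}$ \emph{inside} $V^M_{\delta_{f(n+1)}}$, and only afterwards wrap with the fullness condition $\{z\subseteq V_{\delta_{f(n+1)}}:\ot(z\cap\delta_{f(n+1)})=\delta_{f(n+1)}\}$, which is then automatically compatible. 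With this single change your argument goes through; without it, it does not.
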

 
\begin{proof}
Let $\la p_i : i < \omega \ra$ enumerate $M$, such that for all $n$, $p_n \in V_{\delta_n}$.  For each dense class $D \subseteq \mathbb P_\infty^M$ definable in $M$ from a parameter $p$, if $p \in V_{\delta_n}$, then by elementarity, $D \cap V_{\delta_n}$ is dense in $\mathbb P_{<\delta_n}$.  Let $n \mapsto \la n_0,n_1 \ra$ be the G\"odel pairing function, and let $\la \varphi_n(x,y) : n < \omega \ra$ enumerate the formulas in two free variables.  Now enumerate the dense subclasses of $\mathbb P_\infty$ as follows.  For $n< \omega$, if $\{ x : \varphi_{n_0}(x,p_{n_1}) \}$ is dense, let $D_n$ be this class, and otherwise let $D_n = \mathbb P_\infty$.

Let $b = \{ z \subseteq \kappa_1 : z \cap \kappa_0  \in \kappa_0, \ot(z) \leq \kappa_0 \},$
which by Fact \ref{los} forces that $\kappa_1 = \ot(j[\kappa_1]) \leq j(\kappa_0)$.  Then take $b' \leq b$ in $D_0 \cap V_{\delta_{f(0)}}$.  Let $a_0 = \{ z \subseteq V_{\delta_{f(0)}} : z \cap \bigcup b' \in b', \ot(z \cap \delta_{f(0)} )= \delta_{f(0)} \}$, which forces $\delta_{f(0)}$ to be fixed.

We choose inductively a sequence $a_0 \geq a_1 \geq a_2 \geq \dots$ such that for all $n$:
\begin{enumerate}
\item $a_{n}$ is below something in $D_{n}$.
\item $a_{n} \subseteq \{ z \subseteq V_{\delta_{f(n)}} : \ot(z) = \delta_{f(n)} \}$.
\item $a_{n+1}$ forces $\delta_{f(n+1)-1}< j(\delta_{f(n)}^+)<\delta_{f(n+1)}$.
\end{enumerate}
The last condition ensures that if $f(n)<m<f(n+1)$, then $\delta_m$ is collapsed to $\delta_{f(n)}$ in $M[G]$. 
Given $a_n$, take a cardinal $\kappa$, $\delta_{f(n+1)-1}< \kappa<\delta_{f(n+1)}$, and let $a_n'$ be the stationary set $\{ z \subseteq V_\kappa : z \cap \bigcup a_n \in a_n, |z| = \delta_{f(n)} \}$.  By Fact \ref{los}, this forces that $\kappa = \ot(j[\kappa]) < j(\delta_{f(n)}^+)$.   Then extend to $a_n'' \in D_{n+1} \cap V_{\delta_{f(n+1)}}$.  Then let $a_{n+1} = \{ z \subseteq V_{\delta_{f(n+1)}} : z \cap \bigcup a_n'' \in a_n'', \ot(z \cap \delta_{f(n+1)}) = \delta_{f(n+1)}\}$. 
\end{proof}

\begin{theorem}
\label{aronszajn}
If $\delta$ is good, then $\mathcal A_\delta$ contains an honest subcategory isomorphic to an Aronszajn tree.
\end{theorem}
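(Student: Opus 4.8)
The plan is to construct, by recursion on levels $\alpha < \omega_1$, a system of good models $\{M_t : t \in T\}$ of height $\delta$ indexed by the nodes of a fixed Aronszajn tree $T$, together with a commuting system of amenable embeddings $j_{s,t} : M_s \to M_t$ for $s \le_T t$. I will not rule out cofinal branches by hand: any such branch would be a chain in $\mathcal E_\delta$ of order type $\omega_1 = \delta^+$, which Theorem \ref{nolongchains} forbids automatically. Thus it suffices to produce an \emph{honest} subcategory of $\mathcal A_\delta$ whose underlying partial order is a tree with countable levels and with nodes at every level below $\omega_1$; realizing a fixed Aronszajn tree $T$ (one exists in $\ZFC$) supplies the level structure, and honesty together with Theorem \ref{nolongchains} then forces it to be Aronszajn. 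Successor steps are handled by stationary tower forcing over the countable model $M_t$: to create a child $t'$ I take a generic $G$ as in Lemma \ref{controltower} and set $M_{t'} = M_t[G]$ with $j_{t,t'}$ the generic embedding. Limit levels $\lambda < \omega_1$ have cofinality $\omega$, so each limit node is the direct limit, via Lemma \ref{omegalimit}, of the models along the cofinal $\omega$-chain below it; I arrange the critical points along each branch to grow cofinally relative to the good-witnessing sequence so that the hypotheses of Lemma \ref{omegalimit} hold and the limits are well-founded of height $\delta$ with amenable limit maps.

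Honesty is the crux, and I will secure it with completely J\'onsson cardinals playing the role that measurable cardinals play in the proof of Theorem \ref{wftrees}. For a model $M$ of height $\delta$, let $c_n^M$ denote its $n^{th}$ completely J\'onsson cardinal; this is a definable notion preserved by elementary embeddings, so any elementary $j : M_s \to M_t$ sends $c_n^{M_s}$ to $c_n^{M_t}$, and since $j(\xi) \ge \xi$ on ordinals we get $c_n^{M_t} \ge c_n^{M_s}$ for every $n$. Hence $\la c_n^M : n < \omega \ra$ is pointwise non-decreasing along any embedding, and to rule out an embedding $M_s \to M_t$ it suffices to produce a single $n$ with $c_n^{M_t} < c_n^{M_s}$. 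At each node $u$ where the branch splits toward incomparable nodes, I use Lemma \ref{controltower} to pick two fresh completely J\'onsson cardinals $\gamma_0 < \gamma_1$ of $M_u$: on the first side $\gamma_0$ is kept as a completely J\'onsson cardinal while $\gamma_1$ is collapsed, and on the second side the roles are reversed, in each case leaving the cardinal structure below the collapsed marker untouched and taking all later critical points on that side above both $\gamma_0,\gamma_1$ so that these markers are frozen thereafter. Freezing guarantees that for all descendants $s$ of the first child and $t$ of the second, $c^{M_s}_{n_0} = \gamma_0 < c^{M_t}_{n_0}$ and $c^{M_t}_{n_1} = \gamma_1 < c^{M_s}_{n_1}$ for the appropriate indices $n_0,n_1$, killing embeddings in both directions between the two subtrees. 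Since separation is witnessed at the meet of any pair of incomparable nodes, markers may be reused freely in incomparable parts of the tree; along a single countable branch each split consumes finitely many new markers below $\delta$, and at limits the direct-limit images replenish the supply.

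The main obstacle is the simultaneous bookkeeping needed to make all of this cohere through the $\omega_1$ stages. At every stage the critical points of all future maps on a given branch must stay above every marker frozen so far, so that each established separation survives all later successor \emph{and} limit steps; yet these same critical points must rise cofinally relative to the good-witnessing sequence of each model, so that Lemma \ref{omegalimit} applies at branch limits and the resulting direct limits are genuine transitive models of height $\delta$ with amenable maps. Reconciling the ``freeze above the markers'' demand with the ``push the critical points up'' demand, while keeping each level countable and maintaining an available reserve of fresh completely J\'onsson markers below $\delta$ (replenished via direct-limit images, using that good models carry cofinally many such cardinals and that $V_{\delta_n}^{M} \prec M$), is the delicate part. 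The remaining verifications---that a genuine generic exists at each successor step over the countable model, that the chosen maps compose commutatively, and that the completely J\'onsson structure below a frozen marker is literally preserved into every extension so that the inequalities on the $c_n$ hold on the nose---are routine given Lemmas \ref{controltower} and \ref{omegalimit} and Fact \ref{los}.
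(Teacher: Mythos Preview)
Your overall architecture matches the paper's: build the tree level by level, use Lemma~\ref{controltower} at successors, Lemma~\ref{omegalimit} at limits, and invoke Theorem~\ref{nolongchains} to preclude $\omega_1$-branches. The gap is in your honesty device.

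The two-marker swap does not block embeddings in both directions. Take $\gamma_0,\gamma_1$ to be the $k^{\text{th}}$ and $(k{+}1)^{\text{st}}$ completely J\'onsson cardinals of $M_u$. On side~1 you fix $\gamma_0$ and collapse $\gamma_1$; on side~2 you reverse the roles. With the later structure frozen, the completely J\'onsson sequences from index $k$ onward become $\gamma_0,\gamma_2,\gamma_3,\dots$ in $M_s$ and $\gamma_1,\gamma_2,\gamma_3,\dots$ in $M_t$. Thus $c_n^{M_s}\le c_n^{M_t}$ for \emph{every} $n$, with strict inequality only at $n=k$; nothing obstructs an amenable $M_s\to M_t$. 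Your asserted inequality $c_{n_1}^{M_t}=\gamma_1<c_{n_1}^{M_s}$ fails because collapsing $\gamma_0$ on side~2 shifts the index of $\gamma_1$ down to $k$, so there is no index at which $M_t$ lies strictly below $M_s$. (Even a repaired marker scheme would require you to control the entire completely J\'onsson spectrum of each $M[G]$, not merely the fate of the chosen $\delta_n$'s, since the witnessing sequence need not enumerate all such cardinals.) The paper avoids index bookkeeping entirely: at a split it assigns each child $n$ a \emph{cofinal} pattern of fixed and collapsed $\delta_i$'s, via a block partition $B_m$ of $\omega$, so interleaved that no ZF model of height $\delta$ can contain both $M_u[G_{n_0}]$ and $M_u[G_{n_1}]$---such a model would have $|\delta_{B_m(n_0)}|=|\delta_{B_m(n_1)}|=|\delta_{B_{m+1}(n_0)}|$ for all large $m$ and hence a largest cardinal. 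Since an amenable arrow forces $M_\sigma\subseteq M_\tau$ and every model in the construction literally contains its predecessors, this non-amalgamability propagates to all descendants and delivers honesty outright.
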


\begin{proof}
Let us first describe our mechanism for splitting into incompatible nodes.  Suppose $M$ is a good model of height $\delta$ with witnessing sequence $\la \delta_n : n < \omega \ra$.  Let us partition the natural numbers into blocks $B_0,B_1,B_2,\dots$ as follows.  Let $B_0 = \emptyset$, and given $B_{n-1}$, let $B_n$ be the next $n$ numbers above $B_{n-1}$.  In other words, $B_n = \{ \frac{n(n-1)}{2},\frac{n(n-1)}{2} + 1, \dots,\frac{n(n-1)}{2} + n -1 \}$ for $n>0$.  Let $\la B_n(i) : i < n \ra$ enumerate block $B_n$ in increasing order.
Using Lemma \ref{controltower}, for each $n$, we can take a $\mathbb P_\infty$-generic $G_n$ over $M$ with associated embedding $j_n$ such that:
\begin{enumerate}
\item $\crit(j_n) \geq \delta_n$.
\item For sufficiently large $m$, $\delta_{B_m(n)}$ is fixed by $j_n$.
\item For sufficiently large $m$, if $B_{m}(n)<i<B_{m+1}(n)$, then $\delta_i$ has cardinality $\delta_{B_{m}(n)}$ in $M[G_n]$.
\end{enumerate}
This ensures that if $n_0 < n_1$, then there is no model $N$ of ZF with the same class of ordinals $\delta$ containing both $M[G_{n_0}]$ and $M[G_{n_1}]$.  For if there were such an $N$, then in $N$, for sufficiently large $m$, $|\delta_{B_m(n_0)}| = |\delta_{B_m(n_1)}| = |\delta_{B_{m+1}(n_0)}|$, and so $N$ would have a largest cardinal.

Let $M_0$ be a good model of height $\delta$, with witnessing sequence $\la \delta_n : n <\omega \ra$. 
We will build an $\omega_1$-tree $T \subseteq \,^{<\omega_1}\omega$ by induction.  Each $\sigma \in T$ will be assigned a good model $M_\sigma$ of height $\delta$, and for pairs $\sigma \subseteq \tau$ in $T$, we will assign an amenable embedding $j_{\sigma,\tau} : M_\sigma \to M_\tau$, such that if $\sigma \subseteq \tau \subseteq \upsilon$, then $j_{\sigma,\upsilon} = j_{\tau,\upsilon}\circ j_{\sigma,\tau}$.  When $\sigma \perp \tau$, we will arrange that there is not any ZF model $N$ of height $\delta$ such that $M_\sigma,M_\tau \subseteq N$, which guarantees that there is no arrow between $M_\sigma$ and $M_\tau$ in the category $\mathcal A_\delta$.  This makes the tree of models an honest subcategory.  Theorem \ref{nolongchains} will ensure that $T$ is Aronszajn.

We build $T$ as a union of trees $T_\alpha$ for $\alpha < \omega_1$, where $T_\alpha$ has height $\alpha+1$, and $T_\beta$ end-extends $T_\beta$ for $\beta>\alpha$.  Along the way, we build an increasing sequence of injective functors $F_\alpha : T_\alpha \to \mathcal A_\delta$.  Put $T_0 = \{\emptyset\}$ and $F_0(\emptyset) = M_0$.
Given $T_\alpha$ and $F_\alpha$, form $T_{\alpha+1}$ by adding $\sigma ^\smallfrown \la n \ra$ for each top node $\sigma$ and each $n < \omega$.  For such top nodes $\sigma$, take an embedding $j_{\sigma,\sigma^\smallfrown\la n\ra} : M_\sigma \to M_{\sigma,\sigma^\smallfrown\la n\ra} = M_\sigma[G_n]$ as above, so that no two of them amalgamate.  For $\tau \subseteq \sigma$, let $j_{\tau,\sigma^\smallfrown\la n\ra} = j_{\sigma,\sigma^\smallfrown\la n\ra} \circ j_{\tau,\sigma}$.

Suppose $\lambda$ is a countable limit ordinal and we have constructed $T_\alpha$ and $F_\alpha$ for $\alpha <\lambda$.  Let $T_\lambda'$ and $F_\lambda'$ be the unions of the previously constructed trees and functors.  We must build the top level.
Let us assume the following inductive hypothesis:
for each $\alpha <\lambda$, each $\sigma \in T_\alpha$, and each $n<\omega$, 
there is $\tau \supseteq \sigma$ at the top level of $T_\alpha$ such that 
$\crit(j_{\sigma,\tau}) \geq \delta_n$.
To construct $T_\lambda$, use the induction hypothesis to choose for each node $\sigma\in T_\lambda'$ and each $n<\omega$, an increasing sequence $\la \tau_i : i < \omega \ra \subseteq T_\lambda'$ such that $\tau_0 = \sigma$, $\sup_i \len(\tau_i) = \lambda$, and for each $i < \omega$, 
$$\crit(j_{\tau_i,\tau_{i+1}})> \max \{ \delta_n,\delta_i,j_{\tau_0,\tau_i}(\delta_i),j_{\tau_1,\tau_i}(\delta_i),\dots,j_{\tau_{i-1},\tau_i}(\delta_i) \}.$$
Let $\tau(\sigma,n) = \bigcup_i \tau_i$.  
By Lemma \ref{omegalimit}, the direct limit of the system $\la j_{\tau_i,\tau_m} : M_{\tau_i} \to M_{\tau_m} : i \leq m < \omega \ra$ yields a model $M_{\tau(\sigma,n)}$ of height $\delta$ such that the direct limit maps $j_{\tau_i, \tau(\sigma,n)} : M_{\tau_i} \to M_{\tau(\sigma,n)}$ are amenable.  If $\sigma \subseteq \upsilon \subseteq \tau(\sigma,n)$, then the map $j_{\upsilon,\tau(\sigma,n)}$ is given by $j_{\tau_i,\tau(\sigma,n)} \circ j_{\upsilon,\tau_i}$ for any $i$ such that $\tau_i \supseteq \upsilon$.
Let the top level of $T_\lambda$ consist of all such $\tau(\sigma,n)$, and let the functor $F_\lambda$ be defined as above.  By construction, the inductive hypothesis is preserved.
\end{proof}

Similar constructions yield other kinds of uncountable trees as honest subcategories of $\mathcal A_\delta$. For example, we can build a copy of $^{<\omega}\omega$ where we also require that if $\sigma$ has length $n$, then for every initial segment $\tau \subseteq \sigma$ and every $m<\omega$, $\crit(j_{\sigma,\sigma^\smallfrown \la m \ra}) > j_{\tau,\sigma}(\delta_n)$.  Then \emph{every} branch yields a nice direct limit.  An induction as in Corollary \ref{anycountable}  allows this to be extended to any countable height.  Let us record this as:
\begin{prop}If $\delta$ is good, then for every countable ordinal $\alpha$, there is an honest subcategory of $\mathcal A_\delta$ isomorphic to the complete binary tree of height $\alpha$.
\end{prop}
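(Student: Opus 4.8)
The plan is to run a transfinite recursion on the levels of the complete binary tree $2^{<\alpha}$, parallel to the construction of Theorem \ref{aronszajn}, but now placing a node for \emph{every} branch at each limit level rather than only the countably many cofinal branches chosen there. I would build an increasing chain of approximations $T_\beta = 2^{\le\beta}$ for $\beta<\alpha$, assigning to each $s\in T_\beta$ a good model $M_s$ of height $\delta$ and to each pair $s\subseteq t$ a commuting amenable embedding $j_{s,t}:M_s\to M_t$, with $M_\emptyset$ a fixed good model $M_0$. Two invariants would be maintained: (i) for every node $\sigma$, its two children $M_{\sigma^\smallfrown\la 0\ra}$ and $M_{\sigma^\smallfrown\la 1\ra}$ do not amalgamate, i.e.\ no $\ZF$ model of height $\delta$ contains both; and (ii) along every branch the critical points $\crit(j_{s\restriction\eta,\,s\restriction(\eta+1)})$ are cofinal in $\delta$.

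At a successor step I form $T_{\beta+1}$ by splitting each top node $\sigma$ into $\sigma^\smallfrown\la 0\ra,\sigma^\smallfrown\la 1\ra$: using Lemma \ref{controltower} I take two $\mathbb P_\infty$-generics $G_0,G_1$ over $M_\sigma$, arranged via the block mechanism of Theorem \ref{aronszajn} (restricted to the two values $i\in\{0,1\}$) so that $M_\sigma[G_0]$ and $M_\sigma[G_1]$ fix and collapse incompatible patterns of the $\delta_n$'s and hence do not amalgamate, giving invariant (i). Since Lemma \ref{controltower} lets me prescribe $\crit(j_{\sigma,\sigma^\smallfrown\la i\ra})$ to be any sufficiently large regular cardinal below $\delta$, I make it exceed both the critical points used so far along the branch to $\sigma$ and the values $j_{\tau,\sigma}(\delta_m)$ for all $\tau\subseteq\sigma$ and suitable $m$ growing with $\beta$, securing invariant (ii); lower embeddings are defined by composition. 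At a limit level $\lambda<\alpha$ I must place a node for every $s\in 2^\lambda$ all of whose proper restrictions already lie in $\bigcup_{\beta<\lambda}T_\beta$. For such $s$, invariant (ii) lets me choose a cofinal sequence $\lambda_0<\lambda_1<\cdots\to\lambda$ and put $d_k=\delta_k$ so that $\crit(j_{s\restriction\lambda_k,\,s\restriction(\lambda_k+1)})>j_{s\restriction\lambda_0,\,s\restriction\lambda_k}(\delta_k)$; as embeddings enlarge, this is exactly hypothesis (4) of Lemma \ref{omegalimit} for the system $\la j_{s\restriction\lambda_k,\,s\restriction\lambda_l}\ra$. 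Lemma \ref{omegalimit} then yields a transitive direct limit $M_s$ of height $\delta$ with amenable limit maps, and $M_s$ is good as the elementary image of a good model. Iterating through all $\beta<\alpha$, just as the limit passage is iterated in Corollary \ref{anycountable}, completes $T=2^{<\alpha}$.

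For honesty, let $\mathcal D$ have objects $\{M_s\}$ and arrows $\{j_{s,t}:s\subseteq t\}$. Comparable nodes give proper inclusions $M_s\subsetneq M_t$ (each extension is a nontrivial forcing or direct-limit extension), so there is no backward arrow and $s\mapsto M_s$ is injective. For incompatible $s\perp t$, let $u$ be their longest common initial segment, so without loss of generality $s\supseteq u^\smallfrown\la 0\ra$ and $t\supseteq u^\smallfrown\la 1\ra$. Amenability of the $j$'s gives $M_{u^\smallfrown\la 0\ra}\subseteq M_s$ and $M_{u^\smallfrown\la 1\ra}\subseteq M_t$; hence any arrow $M_s\to M_t$ in $\mathcal A_\delta$ (which forces $M_s\subseteq M_t$) or $M_t\to M_s$ would produce a single $\ZF$ model of height $\delta$ containing both children of $u$, contradicting invariant (i). So no arrow joins incompatible nodes, and $\mathcal D$ is an honest subcategory isomorphic to $2^{<\alpha}$.

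The hard part will be the limit-step bookkeeping: ensuring that the \emph{single} family of critical-point conditions imposed at the successor steps simultaneously supplies, for each of the up-to-continuum-many branches at every limit level, a cofinal selection meeting hypothesis (4) of Lemma \ref{omegalimit}. This is precisely where invariant (ii) must be calibrated — strong enough to work uniformly across all branches, yet with critical points kept below $\delta$ and compatible with the non-amalgamation demands of invariant (i). Organizing this uniformly is exactly the content of the "induction as in Corollary \ref{anycountable}," and I expect it to be the main obstacle.
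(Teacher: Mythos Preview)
Your proposal is correct and follows essentially the same approach as the paper: impose at each successor step a critical-point condition depending only on the \emph{level} of the node (the paper's condition is $\crit(j_{\sigma,\sigma^\smallfrown\la m\ra}) > j_{\tau,\sigma}(\delta_{\len(\sigma)})$ for all $\tau\subseteq\sigma$), use the non-amalgamation splitting from Theorem \ref{aronszajn} for honesty, and pass to direct limits at limit levels via Lemma \ref{omegalimit}, iterating as in Corollary \ref{anycountable}. The level-indexed uniformity is exactly what dissolves the ``hard part'' you flag at the end: since the bound at level $n$ is the same along every branch, the hypotheses of Lemma \ref{omegalimit} are met for every branch automatically, with no per-branch bookkeeping needed.
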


\subsection{Ill-founded subcategories}

In order to construct ill-founded subcategories of $\mathcal A_\delta$, we pass to models of enough set theory that are well-founded beyond $\delta$ and contain some chosen member of $\mathcal A_\delta$, but ultimately have ill-founded $\omega_1$.  One way of achieving this is via Barwise compactness \cite{MR0406760}.  Another is via the following:

\begin{fact}[see \cite{MR2768692}, Section 2.6]
Suppose $r$ is a real.  
Let $G$ be generic for $\p(\omega_1)/\ns$ over $L[r]$.  Let $j : L[r] \to N = \Ult(L[r],G)$ be the ultrapower embedding.  Then $N$ is well-founded up to $\omega_2^{L[r]}$, but $\omega_1^N$ is ill-founded.
\end{fact}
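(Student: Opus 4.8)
The plan is to present $N$ as a generic ultrapower and to analyze the $N$-ordinals below $j(\omega_1^{L[r]})$ through canonical functions. First I would recall the standard theory: a generic $G \subseteq \p(\omega_1)/\ns$ over $L[r]$ is an $L[r]$-normal ultrafilter on $\p(\omega_1)^{L[r]}$, and genericity yields {\L}o{\'s}'s theorem for $N = \Ult(L[r],G)$ (formed from functions $f : \omega_1 \to L[r]$ in $L[r]$), so that $j(x) = [c_x]_G$ is elementary, where $c_x$ denotes the constant function with value $x$. Normality gives $\crit(j) = \omega_1^{L[r]}$ and $[\id]_G = \omega_1^{L[r]}$. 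The basic observation I would record at the outset is that an $N$-ordinal lies below $j(\omega_1^{L[r]}) = [c_{\omega_1}]_G$ precisely when it is of the form $[f]_G$ for some $f : \omega_1 \to \omega_1$ in $L[r]$, since $[f]_G <_N [c_{\omega_1}]_G$ exactly when $\{\alpha : f(\alpha) < \omega_1\} \in G$.

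For well-foundedness up to $\omega_2^{L[r]}$ I would use canonical functions. Working in $L[r]$, for each $\beta < \omega_2^{L[r]}$ choose a surjection $s_\beta : \omega_1 \to \beta$ and put $f_\beta(\alpha) = \ot(s_\beta[\alpha])$; these take values below $\omega_1$, are $<_{\ns}$-increasing in $\beta$, and are independent of the choice of $s_\beta$ modulo $\ns$. The key point is that, using the normality of the generic ultrafilter $G$ (a generic Fodor's lemma) and induction on $\beta$, the $N$-ordinals strictly below $[f_\beta]_G$ are exactly $\{[f_\gamma]_G : \gamma < \beta\}$. This set is well ordered in type $\beta$, so each $[f_\beta]_G$ falls in the well-founded part of $N$ and collapses to $\beta$. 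Hence every ordinal below $\omega_2^{L[r]}$ is realized in $\mathrm{wfp}(N)$, which is the asserted well-foundedness. Moreover $f_\beta(\alpha) < \omega_1 = c_{\omega_1}(\alpha)$ for all $\alpha$, so $[f_\beta]_G <_N j(\omega_1^{L[r]})$ for every $\beta$, and therefore $j(\omega_1^{L[r]}) \geq \omega_2^{L[r]}$.

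It remains to prove that $j(\omega_1^{L[r]})$ is ill founded, equivalently that the linear order induced by $<_N$ on $\{[f]_G : f : \omega_1 \to \omega_1\}$ — the $N$-ordinals below $j(\omega_1^{L[r]})$ — has an infinite descending chain. This is exactly the failure of precipitousness of $\ns$ on $\omega_1$, localized below $j(\omega_1^{L[r]})$. I would obtain it from the fact that $L[r]$ has no inner model with a measurable cardinal, together with the theorem of Jech, Magidor, Mitchell, and Prikry that precipitousness of $\ns_{\omega_1}$ requires such an inner model; concretely, one constructs inside $L[r]$ a refining $\omega$-tower of maximal antichains of stationary sets together with associated rank functions, arranged so that the generic ultrafilter threads a branch along which the rank functions $f_n : \omega_1 \to \omega_1$ satisfy $[f_{n+1}]_G <_N [f_n]_G$. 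Since these witnesses take values in $\omega_1$, the descending chain lies below $j(\omega_1^{L[r]})$, and so it is $j(\omega_1^{L[r]})$ — sitting just above the supremum of the canonical functions — that is ill founded.

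The delicate point, and the main obstacle, is precisely this localization: one must carry out the non-precipitousness construction in $L[r]$ so that the descending chain is built from functions into $\omega_1$ rather than into arbitrary ordinals, for otherwise the ill-foundedness might first appear above $j(\omega_1^{L[r]})$. The canonical-function analysis already guarantees that $\mathrm{wfp}(N)$ reaches $\omega_2^{L[r]}$ from below $j(\omega_1^{L[r]})$, so once a single $N$-ordinal below $j(\omega_1^{L[r]})$ is shown to be ill founded the two halves of the statement fit together; the work is in producing that chain and verifying that it stays below $j(\omega_1^{L[r]})$.
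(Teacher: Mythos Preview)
The paper does not prove this statement; it is quoted as a fact with a reference to the literature (Foreman's Handbook chapter), so there is no paper proof to compare against. That said, your argument for well-foundedness up to $\omega_2^{L[r]}$ via canonical functions is correct and standard, and your identification of $[\id]_G = \omega_1^{L[r]}$ and of the $N$-ordinals below $j(\omega_1^{L[r]})$ with classes $[f]_G$ for $f:\omega_1\to\omega_1$ is exactly right.

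The gap is in the ill-foundedness half. You correctly flag the localization problem---that non-precipitousness of $\ns_{\omega_1}$ in $L[r]$ only tells you $N$ is ill-founded \emph{somewhere}, not that the descent happens below $j(\omega_1^{L[r]})$---but you do not actually resolve it. You assert that the Jech--Magidor--Mitchell--Prikry construction can be arranged to produce the descending chain using functions $f_n:\omega_1\to\omega_1$, but this is precisely the point that needs an argument, and you give none. Invoking JMMP as a black box does not suffice, since its conclusion is merely ``not precipitous.''

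There is a much more direct route that sidesteps this entirely and is almost certainly what the cited reference has in mind. Since $r$ is a real, $j(r)=r$, and elementarity gives $N\models V=L[r]$. Suppose toward a contradiction that $j(\omega_1^{L[r]})$ is well-founded, say equal to the genuine ordinal $\gamma$; your canonical-function argument already gives $\gamma\geq\omega_2^{L[r]}$. Then $N$ is correct about the $L[r]$-hierarchy below $\gamma$, so $(L_{\omega_1}[r])^N = L_\gamma[r]$. Now $N\models\text{``}\omega_1^{L[r]}\text{ is a countable ordinal''}$, so $N$ contains a bijection $b:\omega\to\omega_1^{L[r]}$, and $N$ places $b\in L_{\omega_1}[r]$, i.e.\ $b\in L_\gamma[r]\subseteq L[r]$. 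But then $\omega_1^{L[r]}$ is countable in $L[r]$, a contradiction. This uses only that $N\models V=L[r]$ and absoluteness of the $L[r]$-hierarchy along the well-founded part, and it pins the ill-foundedness exactly at $j(\omega_1^{L[r]})$ without any appeal to the internal structure of a non-precipitousness witness.
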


We can decompose each ordinal in such a model into a sum of purely ill-founded and a purely well-founded part.  Let $N$ be any countable ill-founded model of $\ZF -$Powerset that has standard $\omega$.  Let $\xi$ be the ordertype of the set of well-founded ordinals of $N$.  H. Friedman \cite{MR0347599} showed that the ordertype of the ordinals of $N$ is $\xi + (\mathbb Q \times \xi)$.  Thus every $N$-ordinal $\alpha$ can be written as $\alpha = i(\alpha) + w(\alpha)$, where $w(\alpha) <\xi$, and $i(\alpha)$ is either 0 or is represented in the isomorphism as $(q,0)$, where $q \in \mathbb Q$.

We apply this trick to construct honest subcategories of $\mathcal A_\delta$ isomorphic to three canonical ill-founded partial orders: the real numbers, the universal countable pseudotree (to be defined below), and the reverse-ordered complete binary tree.  We present them in increasing order of the consistency strength of the hypothesis employed.

\begin{theorem}
\label{R}
Suppose there is a countable transitive model of height $\delta$ of \ZFC + ``There is a proper class of completely J\'onsson cardinals.''  Then there is a subcategory of $\mathcal A_\delta$ isomorphic to the linear order $\mathbb R$.
\end{theorem}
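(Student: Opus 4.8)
The plan is to realize $\mathbb R$ as the index order of a commuting system of amenable embeddings built \emph{inside} an ill-founded outer model and then ``collapsed'' to a dense linear order by the $i(\cdot)$-decomposition described just above. Let $M_0$ be the given countable transitive model of height $\delta$ of \ZFC\ plus a proper class of completely J\'onsson cardinals. I want an ill-founded model $N$ that (a) satisfies \ZFC\ plus ``there is a proper class of completely J\'onsson cardinals,'' (b) is well-founded past $\delta$ and contains $M_0$ as a genuine transitive set of rank $\delta$, (c) has an ordinal $\theta$ that $N$ believes is countable but that is externally ill-founded, and (d) is uncountable, with enough ill-founded ordinals below $\theta$ to yield continuum-many objects. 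The generic ultrapower Fact stated above supplies ill-foundedness with a controlled well-founded segment: applying $\mathcal P(\omega_1)/\mathrm{NS}$ to a proper-class model carrying the required large cardinals, arranged so that $\delta<\omega_1$ (hence $\crit$ of the embedding exceeds $\delta$ and $\delta$ stays well-founded), yields such an $N$, well-founded exactly up to $\eta=\omega_2$ of the ground model, with $\theta$ taken to be an ill-founded ordinal above $\eta$. (Barwise compactness gives the same \emph{shape} but only a countable $N$, which would produce $\mathbb Q$ rather than $\mathbb R$; uncountability of the quotient is what the stronger hypothesis buys.)

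Working inside $N$, I would next apply Corollary \ref{anycountable} with the model $M_0$ and the ``countable'' ordinal $\theta$ to obtain a commuting system $\langle j_{\alpha,\beta}:M_\alpha\to M_\beta : \alpha\le\beta\le\theta\rangle$ of amenable embeddings, all of height $\delta$. Because $N$ is well-founded beyond $\delta$, each $M_\alpha$ is a genuine transitive model of \ZFC\ of height $\delta$ and each $j_{\alpha,\beta}$ is a genuine amenable elementary embedding between them; so the whole family is literally a commuting system in $\mathcal A_\delta$, indexed by the externally ill-founded linear order $(\theta,\in^N)$. Since the maps along the chain are nontrivial with strictly increasing critical points, distinct indices give distinct objects.

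Now I would apply the decomposition. Set $L=\{i(\alpha):\alpha<\theta\}$; the paragraph preceding the theorem shows $(L,\in^N)$ is a dense linear order, and keeping only the objects $M_\ell$ and the arrows $j_{\ell,\ell'}$ for $\ell\le\ell'$ in $L$ yields a subcategory of $\mathcal A_\delta$ whose underlying poset is $(L,\in^N)$. It remains to identify this order with $\mathbb R$. Concretely $L$ is essentially the set of $N$-multiples of $\omega^\eta$ below $\theta$, so its external order type is governed by the ill-founded part of $N$ modulo $\eta$. I would finish by invoking the order-theoretic characterization of $\mathbb R$ as the unique separable, Dedekind-complete dense linear order without endpoints: separability and size continuum come from the choice of $N$, and absence of endpoints is arranged by passing to an open sub-interval of $L$. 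Should $L$ itself not be literally $\mathbb R$, it suffices to exhibit an order-embedding of $\mathbb R$ into the dense order $L$ and restrict the subcategory to its range.

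The main obstacle I expect is this last identification: proving that the $\eta$-quotient of the ill-founded ordinals below $\theta$ is, or contains, a \emph{Dedekind-complete} separable dense order of size continuum. Density is already in hand, but completeness is delicate for an ill-founded model, since an external increasing $\omega$-sequence of $N$-ordinals need not have an external least upper bound; ensuring that every such cut is filled forces a careful choice of the ill-founded model $N$ (exploiting that the $\mathcal P(\omega_1)/\mathrm{NS}$-ultrapower is well-founded precisely to $\omega_2$ and thereby controls the ordinals immediately above it) together with a careful choice of $\theta$. The secondary, more routine obstacle is the transfer in the second step, namely verifying that $N$'s internal amenability and elementarity descend to genuine amenability and elementarity in $V$; this follows from well-foundedness of $N$ through $\delta$, which makes the relevant witnesses absolute.
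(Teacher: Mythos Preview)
Your identified ``main obstacle'' is indeed fatal as stated, and the paper's route avoids it entirely. You correctly observe that density of $L=\{i(\alpha):\alpha<\theta\}$ is free, but you have no argument that the ill-founded part of a $\mathcal P(\omega_1)/\mathrm{NS}$-ultrapower, quotiented by $\omega^\eta$, is Dedekind complete or even contains a copy of $\mathbb R$; and there is no reason to expect one. A generic ultrapower gives you ill-foundedness, but it does not hand you control over the order type of the ill-founded ordinals. Your fallback (``exhibit an order-embedding of $\mathbb R$ into the dense order $L$'') is also unjustified: an uncountable dense linear order need not contain a copy of $\mathbb R$, and you have not established separability either.

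The paper's approach is the one you explicitly dismiss. It uses a \emph{countable} ill-founded $N$, so that $L$ is merely a copy of $\mathbb Q$, and then \emph{fills in the Dedekind cuts externally} by taking direct limits: for irrational $r$, set $M_r=\varinjlim_{a<r,\,a\in\mathbb Q} M_a$. The real work is showing these limits are well-founded of height $\delta$ and that all the induced maps are amenable. Well-foundedness is handled by factoring through any rational $b>r$ (so $M_r$ embeds into the genuine transitive model $M_b$). Amenability requires an extra ingredient you are missing: when building the chain inside $N$, one must strengthen the construction of Corollary~\ref{anycountable} so that each successor step sends its critical point past the next $\delta_n$. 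This guarantees that along any increasing sequence of rationals converging to $r$, the critical points are eventually above any prescribed bound, which is exactly what is needed to push $j_{a,r}[\gamma]$ into $M_r$. Without this modification the direct limits could fail to be amenable, and you would not obtain objects of $\mathcal A_\delta$ at the irrational indices.
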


\begin{proof}
Suppose $M$ is a countable transitive model of height $\delta$ of \ZFC + ``There is a proper class of completely J\'onsson cardinals'' and $r_M$ is a real coding $M$.  Let $N$ be a countable model of $\ZFC-$Powerset with standard $\omega$ and ill-founded $\omega_1$, such that $r_M \in N$ and the conclusion of Corollary \ref{anycountable} holds in $N$.

Fix some $\zeta$ that $N$ thinks is a countable ordinal but is really ill-founded.  There is a subcategory of $\mathcal A_\delta$ isomorphic to the linear order $\{ \alpha : N \models \alpha < \zeta \}$.  If we fix an increasing sequence $\la \delta_n : n < \omega \ra$ cofinal in $\delta$, then we can modify the construction in Corollary \ref{anycountable} slightly to require that if $\kappa = \crit(j_{\alpha,\alpha+1})$ and $\delta_n \leq \kappa <\delta_{n+1}$, then $j_{\alpha,\alpha+1}(\kappa) \geq \delta_{n+1}$.

Thus there is a subcategory of $\mathcal A_\delta$ isomorphic to the rationals, which has as its objects all models indexed by $i(\alpha)$ for $0<i(\alpha)<\zeta$.  Let us write this as $\la j_{a,b} : M_a \to M_b : a \leq b \in \mathbb Q \ra$.  We have that for rationals $a<b$, if $\kappa = \crit(j_{a,b})$ and $\delta_n \leq \kappa < \delta_{n+1}$, then $j_{a,b}(\kappa) \geq \delta_{n+1}$.  Now we argue that we can fill in the Dedekind cuts with direct limits and get a larger subcategory of $\mathcal A_\delta$.

Suppose $r \in \mathbb R \setminus \mathbb Q$. 
Let $M_r$ be the direct limit and let $j_{a,r}$ be the direct limit map for rationals $a<r$.  Let $b > r$ be rational.  For $x \in M_r$, there is a rational $a<r$ and $y \in M_a$ such that $x = j_{a,r}(y)$.  Let $j_{r,b}(x) = j_{a,b}(y)$ for any such $a,y$.  The choice of $a,y$ does not matter, since if $x = j_{a,r}(y) = j_{a',r}(y')$ for $a < a'$, then $y' = j_{a,a'}(y)$, and $j_{a,b}(y) = j_{a',b}(y')$.  Furthermore, if $b,b'$ are rational such that $r<b<b'$, then $j_{r,b'} = j_{b,b'} \circ j_{r,b}$.  This is because if $x \in M_r$ and $a,y$ are such that $j_{a,r}(y) = x$, then 
$$j_{r,b'}(x) = j_{a,b'}(y) = j_{b,b'} \circ j_{a,b}(y)
= j_{b,b'} \circ j_{r,b}(x).$$
 If $M_r \models \varphi(x_0,\dots,x_{n-1})$, then there are a rational $a<r$ and $y_0,\dots,y_{n-1} \in M_a$ such that $j_{a,r}(y_m) = x_m$ for $m<n$, and so $M_b \models \varphi(j_{a,b}(y_0),\dots,j_{a,b}(y_{n-1}))$.  Thus $j_{r,b}$ is elementary, and so $M_r$ is well-founded and has height $\delta$.  For irrational numbers $r<s$,
let $j_{r,s} = j_{b,s} \circ j_{r,b}$ for any rational $b \in (r,s)$.  The choice of $b$ does not matter, since for $b,b'$ rational such that $r < b < b' < s$, we have:
$$j_{b,s} \circ j_{r,b} = j_{b',s} \circ j_{b,b'} \circ j_{r,b} = j_{b',s} \circ j_{r,b'}.$$
To verify the commutativity of the system, let $r<s<t$ be reals and let $x \in M_r$.  
Let $b,c$ be rational such that $r<b<s<c<t$.  Then
$$j_{r,t}(x) = j_{b,t} \circ j_{r,b}(x) = j_{c,t} \circ j_{b,c} \circ j_{r,b}(x).$$
But by definition, $j_{b,c}(z) = j_{s,c} \circ j_{b,s}(z)$ for any $z \in M_b$.  Thus: 
$$j_{r,t}(x) = j_{c,t} \circ \ j_{s,c} \circ j_{b,s} \circ j_{r,b}(x) = j_{s,t} \circ j_{r,s}(x).$$

To show that the maps are amenable, first suppose $r$ is irrational, $a<r$ is rational, and $\gamma < \delta$.  Let $\la a_n : n < \omega \ra$ be an increasing sequence of rationals converging to $r$, with $a_0 = a$.  For $n<\omega$, let $\kappa_n = \min \{ \crit(j_{a_n,a_m}) : n<m < \omega \}$.  By passing to a subsequence if necessary, we may assume that for all $n$, $\kappa_n = \crit(j_{a_n,a_{n+1}})$, and $\kappa_n \leq \kappa_{n+1}$.  Now it cannot happen that for infinitely many $n$ and all $m > n$, $j_{a_n,a_m}(\kappa_n) > \kappa_m$, because then the direct limit would be ill-founded.  Thus there is some $n^*$ such that for all $n \geq n^*$, there is $m>n$ such that $j_{a_n,a_m}(\kappa_n) \leq \kappa_m$.  By our extra requirement on how high the critical points are sent, this means that for large enough $n$, $\kappa_n > \gamma$.  Therefore, for a large enough $n$, $j_{a,r}[\gamma] = j_{a_n,r}(j_{a,a_n}[\gamma]) \in M_r$.

Now suppose $r$ is irrational, $b>r$ is rational, and $\gamma<\delta$.  By the previous paragraph, there is some rational $a<r$ such that $\crit(j_{a,r}) > \gamma$.  Thus $j_{r,b}[\gamma] = j_{a,b}[\gamma] \in M_b$.  Finally, if $r<s$ are irrational, the map $j_{r,s}$ is amenable since it is the composition of two amenable maps $j_{b,s} \circ j_{r,b}$, for any rational $b \in (r,s)$.
\end{proof}

We note also that $M_r = \bigcup_{a<r} M_a$ for any $r$.  This is true for rational $r$ by the construction in the proof of Lemma \ref{anycountable}, since all limit models are direct limits.  If $r$ is irrational and $x \in M_r$, then $x \in M_b$ for all rational $b > r$.  If $x \notin \bigcup_{a<r} M_a$, then $N$ thinks there is some least ordinal $\gamma$ such that $x \in M_\gamma$, which must be a successor ordinal of $N$.  Thus $x \notin M_{i(\gamma)}$, and $i(\gamma)$ corresponds to a rational number greater than $r$.  This contradiction shows that $x \in M_a$ for some $a<r$.

Using stationary tower forcing as the basic building block, we have constructed various (well-founded) trees and linear orders as honest subcategories of $\mathcal A_\delta$.  A natural generalization of these two kinds of partial orders is the notion of a \emph{pseudotree}, which is simply a partial order which is linear below any given element.  By the well-known universality of $\mathbb Q$, all countable linear orders appear in $\mathcal A_\delta$ for appropriate $\delta$.  In fact, this generalizes to pseudotrees:

\begin{theorem}
\label{pseudo}
If $\delta$ is good, then every countable pseudotree is isomorphic to an honest subcategory of $\mathcal A_\delta$.
\end{theorem}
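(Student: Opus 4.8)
The plan is to reduce the theorem to a single task — realizing the \emph{universal countable pseudotree} $\mathbb U$ as an honest subcategory — and then to build that copy inside an ill-founded model, extracting it by the Cantor-normal-form decomposition developed above for $\mathbb R$. Two soft observations drive the reduction. First, a full subcategory of an honest subcategory is again honest: if $\mathcal D$ is honest in $\mathcal A_\delta$ and $\mathcal D'$ consists of some objects of $\mathcal D$ together with all $\mathcal D$-arrows between them, then any $\mathcal A_\delta$-arrow between objects of $\mathcal D'$ is already a $\mathcal D$-arrow, hence a $\mathcal D'$-arrow. Second, by the model theory of pseudotrees to be developed, there is a universal countable pseudotree $\mathbb U$, axiomatized by a first-order theory and determined up to isomorphism among countable pseudotrees, into which every countable pseudotree embeds as an induced substructure. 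Combining these, it suffices to produce one honest subcategory isomorphic to $\mathbb U$: given any countable pseudotree $P$, an induced embedding $P \hookrightarrow \mathbb U$ selects a full subcategory of that copy, which is then honest and isomorphic to $P$.

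To realize $\mathbb U$, I would imitate the $\mathbb R$ construction. Fix a good model of height $\delta$ coded by a real $r_M$, and take a countable model $N$ of a sufficient fragment of \ZFC with standard $\omega$ but ill-founded $\omega_1$, with $r_M \in N$, with $N$ well-founded past $\delta$, and with $N$ satisfying the hypotheses needed to run the well-founded constructions of this section. Working inside $N$, I would build by transfinite recursion of length some $N$-countable but externally ill-founded ordinal $\zeta$ a commuting system of amenable embeddings indexed by a complete $\omega$-branching tree of height $\zeta$: at successor levels each node splits $\omega$-fold using the block-of-J\'onsson-cardinals mechanism of Theorem \ref{aronszajn}, so that distinct children receive conflicting collapsing fingerprints and hence are non-amalgamable; at limit levels one takes direct limits as in Lemma \ref{omegalimit}, the critical points pushed up via Lemma \ref{controltower} so that earlier fingerprints persist. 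Because $N$ is well-founded past $\delta$ and every $M_p$ has height $\delta$, each $M_p$ is a genuine transitive model of \ZFC and each $j_{p,q}$ a genuine amenable embedding, so the whole system lives in $V$ as a real subfamily of $\mathcal A_\delta$ indexed by the externally ill-founded tree $T$.

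The final step is extraction. Applying the $i(\cdot)/w(\cdot)$ decomposition to the $N$-ordinal lengths of nodes — retaining only the levels coming from the ill-founded parts $i(\cdot)$ and discarding the purely well-founded successor portions along each branch — yields an induced suborder $Q \subseteq T$ whose chains are downward dense and whose branching is dense; here I would check that $Q$ verifies the first-order axioms characterizing $\mathbb U$, so that $Q \cong \mathbb U$. Honesty of $\{ M_x : x \in Q \}$ then splits into three cases. For $x <_Q y$ the relevant composite and direct-limit map is a forward amenable arrow. For $x,y$ incomparable in $Q$ they are tree-incomparable, hence non-amalgamable, so an amenable arrow either way — which by amenability would exhibit one model as a height-$\delta$ model of \ZFC containing the other — is impossible. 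For $y <_Q x$, a backward amenable $M_x \to M_y$ together with the forward $M_y \to M_x$ would force $M_x = M_y$; but the forward arrow collapses a cardinal, so $M_x$ and $M_y$ disagree about cardinals and in particular are distinct.

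The hardest part will be the coordination demanded by the extraction: arranging the $\omega$-fold splittings densely enough along the ill-founded tree that, after the decomposition, every pair of $Q$-incomparable nodes still carries genuinely conflicting collapsing fingerprints while every $Q$-chain remains amalgamable, and then verifying that the resulting downward-dense, branch-dense order actually satisfies the axioms of $\mathbb U$ rather than merely resembling it. It is precisely the downward density — impossible to manufacture by any well-founded recursion, since one cannot freely adjoin predecessors to an already-built model — that forces the detour through the ill-founded $N$ and makes the verification of the universal axioms the crux of the argument.
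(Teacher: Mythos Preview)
Your proposal is correct and follows essentially the same route as the paper: reduce to realizing the universal countable DPM pseudotree, pass to a countable ill-founded $N$ containing a code for a good model, run a tree-of-models construction inside $N$, and then extract the DPM structure by restricting to nodes at levels of the form $i(\alpha)$. The only cosmetic difference is that the paper invokes Theorem~\ref{aronszajn} as a black box inside $N$ (so the ambient tree is what $N$ believes is Aronszajn) rather than rebuilding a complete $\omega$-branching tree of some ill-founded height~$\zeta$; either choice yields enough splitting and enough levels for the DPM verification, and your explicit three-case honesty check (including the backward-arrow case via $M_y \subsetneq M_x$) is if anything more careful than the paper's terse appeal to absoluteness of non-amalgamability.
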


To prove this, we first construct a certain countable pseudotree which contains every other countable pseudotree as a substructure.  Let $T_{\mathbb Q}$ be the collection of all partial functions $f : \mathbb Q \to \omega$ such that:
\begin{enumerate}
\item $\dom f$ is a proper initial segment of $\mathbb Q$.
\item If $f \not= \emptyset$, then there is a finite sequence $-\infty = q_0 < q_1 < \dots < q_n = \max(\dom f)$ such that for $i <n$, $f \restriction (q_i,q_{i+1}]$ is constant.
\end{enumerate}
We put $f \leq g$ when $f \subseteq g$.  Notice that $T_{\mathbb Q}$ satisfies the following axiom set, which we call DPM for ``Dense Pseudotrees with Meets'':
\begin{enumerate}
\item It's a pseudotree with a least element 0.
\item Every two elements $f,g$ have an infimum $f \wedge g$.
\item Infinite Splitting: For all $g,f_0,\dots,f_{n-1}$ such that $f_i \wedge f_j = g$ for $i<j < n$, there is $f_n > g$ such that $f_i \wedge f_n = g$ for $i< n$.
\item Density: If $g<f$, there is $h$ such that $g<h<f$.
\end{enumerate}
To be more precise, we take the language to be $\{ 0, \wedge, \leq \}$, where 0 is a constant, $\wedge$ is a binary function, and $a<b$ means $a \leq b$ and $a \not= b$.  The third axiom is actually an infinite scheme.  Let PM be the system consisting of only the first two axioms.  Note that in the theory PM, $\leq$ is definable in terms of $\wedge$: $p \leq q$ iff $p \wedge q = p$.  Thus to verify that a map from one model of PM to another is an embedding, it suffices to show that it is an injection that is a homomorphism in the restricted language $\{0,\wedge\}$.

\begin{lemma}
\label{universal}
Suppose $T$ satisfies PM, $Q$ satisfies DPM, $a \in T$, $S$ is a finite substructure of $T$, and $\pi : S \to Q$ is an embedding.  Then there is a finite substructure $S'$ of $T$ containing $S \cup \{ a \}$ and an embedding $\pi' : S' \to Q$ extending $\pi$.
\end{lemma}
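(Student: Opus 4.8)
The plan is to first pin down the structure of the substructure $S'$ generated by $S \cup \{a\}$, and then to place the (at most two) new points of $S'$ into $Q$ using the Density and Infinite Splitting axioms of DPM. Since a finitely generated $\wedge$-semilattice is finite, $S'$ is automatically finite, so the real work is to understand its shape and to extend $\pi$.

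Because $T$ and $Q$ are pseudotrees, the set $\{a \wedge s : s \in S\}$ is a chain below $a$ (all of its members are $\le a$, and elements below a common element are comparable). Let $b$ be its largest member, say $b = a \wedge s^\ast$. I would first prove the structural claim that every meet $a\wedge s$ with $a \wedge s < b$ already lies in $S$. Writing $w = s \wedge s^\ast \in S$, one checks using $a\wedge s \le b \le s^\ast$ that $a \wedge s = a \wedge w = b \wedge w$; since $b$ and $w$ are both $\le s^\ast$ they are comparable, and the alternative $b \le w$ would give $a\wedge s = b$, so in the remaining case $a \wedge s = w \in S$. Consequently $S' = S \cup \{a, b\}$, where $b$ is either already an element of $S$ or is a single genuinely new ``branch point'' sitting just above $b_S := \max\{s \in S : s \le a\}$.

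With the shape of $S'$ in hand, I would extend $\pi$ via two basic moves. The \emph{density move} inserts a new point strictly below an existing element, realized by the Density axiom applied to an image-free open interval of the appropriate chain; the \emph{splitting move} creates a fresh branch immediately above a given point, realized by Infinite Splitting. There are three cases. If $a \le s^\ast$ (so $b = a$), a single density move places $\pi'(a)$ strictly between $\pi(b_S)$ and $\pi(e)$, where $e$ is the meet of the finitely many $S$-elements lying strictly above $a$ (note $e > a > b_S$, so $e \notin S$ is ruled out). If $b \in S$ and $b < a$, a single splitting move places $\pi'(a)$ above $\pi(b)$. If $b \notin S$, I would perform both moves in turn: a density move to place $\pi'(b)$ on the chain between $\pi(b_S)$ and $\pi(\bigwedge E)$, where $E = \{s \in S : a \wedge s = b\}$, which reduces the placement of $a$ to the previous case relative to $S \cup \{b\}$.

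The crux is the splitting move and the verification that it preserves all meets simultaneously. Given $b$ in the current structure and $a > b$ with $a \wedge s = b$ for all $s \in E$, I would group $E$ by the equivalence $s \sim s'$ iff $s \wedge s' > b$, take the class-meets $m_i = \bigwedge C_i$ (each $> b$, with $\pi(m_i) \wedge \pi(m_j) = \pi(b)$ across distinct classes), and apply Infinite Splitting to the finite family $\{\pi(m_i)\}$ over $g = \pi(b)$ to obtain $y > \pi(b)$ with $y \wedge \pi(m_i) = \pi(b)$ for all $i$. A short argument using linearity below each $\pi(s)$ then upgrades this to $y \wedge \pi(s) = \pi(b)$ for \emph{every} $s \in E$, and gives $y \wedge \pi(s') = \pi(a \wedge s')$ for the remaining $s'$ (whose branch points already lie at or below $\pi(b_S)$). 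I expect the main obstacle to be exactly this bookkeeping: confirming the two-new-point shape of $S'$, and checking that splitting away from the representative meets $\bigwedge C_i$, rather than from all of $E$, is both legitimate (Infinite Splitting demands a family that is pairwise-$b$) and sufficient (it forces the correct meet with each individual element of $E$).
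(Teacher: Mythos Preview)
Your approach is correct and essentially the same as the paper's: both identify the branch point $b = \max\{a \wedge s : s \in S\}$, show that the generated substructure is $S' = S \cup \{a,b\}$, place $b$ via Density, and then place $a$ via Infinite Splitting applied over $\pi'(b)$. Your class-meets $m_i$ coincide exactly with the paper's minimal elements $c_i$ of $S$ above $b$, and your explicit three-case split (in particular separating out $a = b$) is just a reorganization of the paper's more uniform treatment.
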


\begin{proof}
Let us assume $a \notin S$.  Let $b = \max \{ a \wedge s : s \in S \}$.  Note that $a \wedge s = b \wedge s$ for all $s \in S$ by the maximality of $b$.  First we claim that $S \cup \{ b \}$ and $S \cup \{ a,b \}$ are both closed under the operation $\wedge$.  Let $s_0 = \inf \{ s \in S : s \geq b \}$.  Since $T$ is linearly ordered below $s_0$, for all $s \in S$,
$$b \wedge s =
\begin{cases} 
b 			&\text{ if } b \leq s_0 \wedge s \\
s_0 \wedge s 	&\text{ if } s_0 \wedge s \leq b. \\
\end{cases}$$
Thus for all $s \in S$, $b \wedge s \in S \cup \{ b \}$.
Further, if $b \leq s \wedge s_0$, then $b \leq a \wedge s$.
 Thus for all $s \in S$, either $a \wedge s = b$, or $b > a \wedge s$ and so $b > s \wedge s_0$, which implies $a \wedge s = b \wedge s = s \wedge s_0 \in S$.
Therefore, $S \cup \{ a,b \}$ is also closed under $\wedge$.

Let us first extend $\pi$ to an embedding $\pi' : S \cup \{b \} \to Q$.  If $b \notin S$, let $s_1 \in S$ be the maximum element below $b$.  Use Density to pick some $b^* \in Q$ such that $\pi(s_1)<b^*<\pi(s_0)$, and let $\pi'(b) = b^*$.  Suppose $s \in S$. If $s_0 \leq s$, then $\pi'(b \wedge s) = \pi'(b) = b^* = b^* \wedge \pi(s)$, since $\pi(s) \geq \pi(s_0) > b^*$.  If $s_0 \nleq s$, then $s_0 \wedge s \leq s_1$ by the minimality of $s_0$, and $\pi'(b \wedge s) = \pi(s_0 \wedge s) = \pi(s_0) \wedge \pi(s) = b^* \wedge \pi(s_0) \wedge \pi(s) =  b^* \wedge \pi(s)$, since $\pi(s) \wedge \pi(s_0) \leq \pi(s_1) < b^*$.

Now if $b$ is not maximal in $S \cup \{b \}$, let $\{ c_0,\dots,c_{n-1} \}$ be the set of minimal elements above $b$ in $S$, which implies that $c_i \wedge c_j = b$ for $i < j < n$.  Use Infinite Splitting to find $a^* \in Q$ such that $a^* > b^*$ and, if the $c_i$ are defined, $a^* \wedge \pi(c_i) = b^*$ for $i< n$.  We claim that for all $s \in S$, $a^* \wedge \pi(s) = b^* \wedge \pi(s)$.  For if $s = b$,  $a^* \wedge \pi(b) = a^* \wedge b^* = b^* = b^* \wedge b^*$.  If $s > b$, then $s \geq c_i$ for some $i < n$, so $a^* \wedge \pi(s) = b^*$.  If $s \ngeq b$, then $\pi(s) \wedge b^* < b^* < a^*$, so also $\pi(s) \wedge a^* < b^*$, and thus $a^* \wedge \pi(s) = b^* \wedge \pi(s)$.  Thus for all $s \in S$, $\pi'(a \wedge s) = \pi'(b \wedge s) = b^* \wedge \pi(s) = a^* \wedge \pi(s)$.  Therefore, we may define the desired extension $\pi''$ of $\pi'$ by putting $\pi''(a) = a^*$.
\end{proof}

\begin{corollary}
Any countable model of PM can be embedded into any model of DPM, and any two countable models of DPM are isomorphic.  
\end{corollary}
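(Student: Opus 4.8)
The plan is to derive both assertions from the one-step extension furnished by Lemma~\ref{universal} via the standard Cantor back-and-forth method. Throughout I recall that in the theory PM the relation $\leq$ is definable from $\wedge$, so a map is an embedding as soon as it is an injective homomorphism in the language $\{0,\wedge\}$; in particular the image of any embedding is automatically closed under $\wedge$ and contains $0$, hence is a substructure.

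For the first assertion, let $T$ be a countable model of PM and $Q$ an arbitrary model of DPM. Enumerate $T = \{ a_n : n < \omega \}$. I would build an increasing chain of finite substructures $S_0 \subseteq S_1 \subseteq \cdots$ of $T$ together with embeddings $\pi_n : S_n \to Q$ such that $\pi_{n+1}$ extends $\pi_n$ and $a_n \in S_{n+1}$. Start with $S_0 = \{ 0 \}$ and $\pi_0(0) = 0$, which is an embedding since $0$ is least and $0 \wedge 0 = 0$ in both structures. Given $\pi_n : S_n \to Q$, apply Lemma~\ref{universal} with the element $a_n$ to obtain a finite substructure $S_{n+1} \supseteq S_n \cup \{ a_n \}$ and an embedding $\pi_{n+1} : S_{n+1} \to Q$ extending $\pi_n$. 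Then $\pi = \bigcup_n \pi_n$ is defined on all of $T$, and it is an embedding: any finite set of elements of $T$ lies in some $S_n$, where $\pi$ agrees with the embedding $\pi_n$, so $\pi$ is an injective $\wedge$-homomorphism. Note this direction uses nothing about the size of $Q$.

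For the second assertion, let $Q_0, Q_1$ be countable models of DPM, hence both models of PM. Enumerate $Q_0 = \{ a_n : n < \omega \}$ and $Q_1 = \{ b_n : n < \omega \}$. I would build an increasing chain of finite partial isomorphisms, namely embeddings $\pi_n : S_n \to Q_1$ with $S_n$ a finite substructure of $Q_0$, by alternating the roles of the two models. Begin with $\pi_0(0) = 0$. At an even stage $n = 2k$, apply Lemma~\ref{universal} with $T = Q_0$, $Q = Q_1$, and the element $a_k$ to guarantee $a_k \in \dom(\pi_{n+1})$. At an odd stage $n = 2k+1$, pass to the inverse map $\pi_n^{-1} : \pi_n[S_n] \to Q_0$, which is again an embedding between finite substructures since $\pi_n[S_n]$ is closed under $\wedge$, apply the lemma to it with the element $b_k$, and invert the extension to guarantee $b_k \in \ran(\pi_{n+1})$. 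The union $\pi = \bigcup_n \pi_n$ is then a bijection between $Q_0$ and $Q_1$ that is an injective $\wedge$-homomorphism in both directions, hence an isomorphism.

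There is no genuine obstacle once Lemma~\ref{universal} is in hand; both claims reduce to bookkeeping for the back-and-forth. The only points deserving a moment's care are that the inverse of a finite partial embedding is again a finite partial embedding between substructures (used at the ``back'' steps), which holds because the image of a $\wedge$-homomorphism is closed under $\wedge$, and that $\leq$ need never be tracked separately because it is definable from $\wedge$ under PM.
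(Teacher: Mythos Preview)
Your proof is correct and follows essentially the same approach as the paper: iterate Lemma~\ref{universal} to build an increasing chain of finite partial embeddings for the first claim, and run the standard back-and-forth for the second. The only cosmetic difference is that the paper performs the forth and back steps together at each stage (extend $\pi_n^{-1}$ to hit $t_n$, then extend the inverse of that to hit $s_n$), whereas you alternate on even and odd stages; both are routine variants of the same argument.
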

\begin{proof}
Suppose $S \models$ PM, $T \models$ DPM, and $S$ is countable.  Let $\la s_i : i < \omega \ra$ enumerate $S$. Using Lemma \ref{universal}, we can build increasing sequences of finite substructures $S_n \subseteq S$ and embeddings $\pi_n : S_n \to T$ for $n<\omega$, such that $S_n \supseteq \{ s_i : i < n \}$.  Then $\pi = \bigcup_{n<\omega} \pi_n$ is an embedding of $S$ into $T$.  If we additionally assume that $T = \{ t_i : i < \omega \}$ and $S \models$ DPM, then given $\pi_n : S_n \to T$, we can find a finite substructure $T_n \supseteq \ran \pi_n \cup \{ t_n \}$ and an embedding $\sigma_n : T_n \to S$ extending $\pi_n^{-1}$.  We then take $S_{n+1} \supseteq \ran \sigma_n \cup \{ s_n \}$ and $\pi_{n+1}$ extending $\sigma_n^{-1}$.   Then $\pi = \bigcup_{n<\omega} \pi_n$ is an isomorphism.
\end{proof}

\begin{lemma}
Every pseudotree is a substructure of a model of PM.
\end{lemma}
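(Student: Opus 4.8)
The plan is to realize $P$ as an induced subposet of its \emph{pseudotree of downward-closed chains}. Given a pseudotree $(P,\leq)$, I would let $\mathcal{C}$ be the collection of all subsets $C \subseteq P$ that are simultaneously chains (linearly ordered by $\leq$) and downward closed (if $x \in C$ and $y \leq x$, then $y \in C$), ordered by inclusion. The least element $0$ will be the empty set, the meet operation will be intersection, and the embedding $e : P \to \mathcal{C}$ will be $e(p) = \{x \in P : x \leq p\}$; since $P$ is linear below $p$, this downset is indeed a downward-closed chain. Because a pseudotree is a structure in the language $\{\leq\}$, showing that $P$ is a substructure of a model of PM means exhibiting it as an induced subposet of the $\{\leq\}$-reduct of such a model, which is exactly what $e$ will accomplish.

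First I would check that $\mathcal{C}$ is a model of PM. The empty set is a downward-closed chain contained in every member of $\mathcal{C}$, so it serves as $0$. For the pseudotree axiom, fix $C \in \mathcal{C}$; any $D \in \mathcal{C}$ with $D \subseteq C$ is a downward closed subset of $P$ contained in the chain $C$, hence an initial segment of $C$, and initial segments of a chain are linearly ordered by inclusion, so $\mathcal{C}$ is linear below $C$. For meets, note that $C \cap D$ is again downward closed and, being a subset of the chain $C$, is again a chain; thus $C \cap D \in \mathcal{C}$, and it is plainly the largest member of $\mathcal{C}$ contained in both, i.e.\ the infimum $C \wedge D$.

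Next I would verify that $e$ is an embedding of $\{\leq\}$-structures that in fact respects existing meets. Since $p \in e(p)$, one has $e(p) \subseteq e(q)$ iff $p \leq q$, so $e$ is injective and both preserves and reflects $\leq$. Moreover the common lower bounds of $p$ and $q$ form exactly $e(p) \cap e(q)$, so whenever $p \wedge q$ exists in $P$ it is the maximum of this set and $e(p \wedge q) = e(p) \cap e(q) = e(p) \wedge e(q)$. Hence $e[P]$ is an induced subposet of the PM-model $\mathcal{C}$, establishing the lemma.

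I do not expect a serious obstacle; the only delicate point is the pseudotree verification, where one must use both defining properties of the members of $\mathcal{C}$ — being a chain makes comparabilities among lower elements automatic, while being downward closed forces such lower elements to be genuine initial segments of $C$. For the application to countable pseudotrees in Theorem~\ref{pseudo}, I would finally remark that when $P$ is countable the substructure of $\mathcal{C}$ generated by $e[P] \cup \{0\}$ under $\wedge$ is countable, its members being $0$ together with the finite intersections $\bigcap_{p \in F} e(p)$ for finite $F \subseteq P$; being closed under $\wedge$ and containing $0$, this substructure is itself a countable model of PM containing a copy of $P$.
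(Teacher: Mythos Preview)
Your proof is correct and follows essentially the same approach as the paper: embed $P$ via the principal-downset map $p \mapsto \{x : x \leq p\}$ into a family of downward-closed chains of $P$ ordered by inclusion, with intersection serving as the meet. The only cosmetic difference is that the paper takes the minimal such family (the $S_t$'s together with their pairwise intersections and $\emptyset$) and must then verify closure under finite intersection, whereas you take the full collection of downward-closed chains, for which closure under intersection is immediate; your final remark on the generated substructure recovers exactly the paper's $T^*$.
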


\begin{proof}
Let $T$ be a pseudotree.  $T$ is isomorphic to the collection of its subsets of the form $S_t = \{ x \in T : x \leq t \}$, ordered by inclusion.  Extend this to a collection $T^*$ by adding $S_{t_0} \cap S_{t_1}$ for every two $t_0,t_1\in T$, and $\emptyset$ if $T$ does not already have a minimal element.  We claim that $T^*$ is closed under finite intersection.  Let $t_0,t_1,t_2,t_3 \in T$.
The sets $S_{t_0} \cap S_{t_i}$ for $i < 4$ are initial segments of the linearly ordered set $S_{t_0}$, and so are $\subseteq$-comparable.  If $j$ is such that $S_{t_0} \cap S_{t_j}$ is smallest, then $S_{t_0} \cap S_{t_j} \subseteq S_{t_i}$ for all $i<4$.
To check that $T^*$ is a pseudotree, suppose $S_0,S_1,S_2 \in T^*$ and $S_0,S_1 \subseteq S_2$.  Again, $S_0$ and $S_1$ are initial segments of a linear order, so one is contained in the other.
\end{proof}

\begin{corollary}
Every countable pseudotree can be embedded into $T_{\mathbb Q}$.
\end{corollary}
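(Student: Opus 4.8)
The plan is to chain together the three results that immediately precede the statement. First I would record that $T_{\mathbb Q}$ is itself a model of DPM; this was checked when $T_{\mathbb Q}$ was defined. Now let $P$ be a countable pseudotree. By the lemma that every pseudotree is a substructure of a model of PM, I realize $P$ as a sub-partial-order of the structure $T^* = \{ S_t : t \in P \} \cup \{ S_{t_0} \cap S_{t_1} : t_0,t_1 \in P \}$ (together with $\emptyset$ if $P$ has no least element), which that lemma shows to be a model of PM. The essential bookkeeping observation is that $T^*$ is countable, since $P$ has only countably many pairs and hence only countably many meets are adjoined to the countable family $\{ S_t \}$.

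Next I would apply the corollary that every countable model of PM embeds into every model of DPM to obtain an embedding $\sigma : T^* \to T_{\mathbb Q}$ in the language $\{ 0, \wedge, \leq \}$, and compose it with the order-embedding $P \hookrightarrow T^*$. The only point requiring care is that the composite is a genuine embedding of pseudotrees, i.e.\ that it preserves and reflects $\leq$. The first map is an isomorphism of $P$ onto $\{ S_t \}$ ordered by inclusion, hence an order-embedding; it is irrelevant that $P$ need not be closed under the meets computed in $T^*$, since we only ask for an order-embedding of the pseudotree. The map $\sigma$ preserves $\wedge$, and because $p \leq q \Leftrightarrow p \wedge q = p$ holds in every model of PM, it preserves and reflects $\leq$ as well. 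Thus the composite is the desired pseudotree embedding of $P$ into $T_{\mathbb Q}$.

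There is no hard estimate here; the main thing to keep straight is the shift of language between the two steps. The pseudotree $P$ is treated purely as a partial order, whereas the intermediate structures live in the full meet-semilattice language, and one must confirm at the seam that an order-embedding followed by a meet-preserving (hence order-preserving and order-reflecting) map still yields an order-embedding overall. Once this is observed the conclusion is immediate.
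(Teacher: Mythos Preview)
Your proposal is correct and matches the paper's intended argument: the corollary is stated without proof precisely because it follows by chaining the preceding lemma (embedding into a model of PM), the corollary on countable models of PM embedding into models of DPM, and the observation that $T_{\mathbb Q}\models$ DPM. Your explicit check that the intermediate structure $T^*$ remains countable, and your care about the language seam between order-embeddings and $\wedge$-preserving maps, fill in exactly the details the paper leaves implicit.
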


Now we argue that if $\delta$ is a good ordinal, there is a countable honest subcategory of $\mathcal A_\delta$ that is a partial order satisfying DPM.  Suppose $M$ is a good model of height $\delta$ and $r_M$ is a real coding $M$.  Let $N$ be a countable model of $\ZFC -$ Powerset with standard $\omega$ and ill-founded $\omega_1$, such that $r_M \in N$ and $N$ satisfies the conclusion of Theorem \ref{aronszajn}.  In the real universe, the object $T$ that $N$ thinks is an Aronszajn tree isomorphic to an honest subcategory of $\mathcal A_\delta^N$ is actually a countable pseudotree isomorphic to an honest subcategory of $\mathcal A_\delta$, since $\delta$ is in the well-founded part of $N$, and because the non-amalgamability of models corresponding to incompatible nodes is witnessed in an absolute way.  We will find a substructure of $T$ that satisfies DPM.

As before, each $N$-ordinal $\alpha$ has a decomposition into ill-founded and well-founded parts, $\alpha = i(\alpha)+w(\alpha)$, and the collection $\{ i(\alpha) : \alpha < \omega_1^N \}$ is a dense linear order with a left endpoint.
Let $S \subseteq T$ be the collection of nodes at levels $i(\alpha)$ for $\alpha < \omega_1^N$.  Let us verify each axiom of DPM.  We have already argued that it's a pseudotree, and we keep the root node.  For infima, if $s,t \in S$ are incomparable, then $N$ thinks that there is a least ordinal $\alpha$ such that $s(\alpha) \not= t(\alpha)$.  Then $s \restriction i(\alpha) = t \restriction i(\alpha)$ is the infimum of $s$ and $t$ in $S$, since $i(\alpha)<i(\beta)$ implies $\alpha<\beta$.  To verify Density, just use the fact that the collection of levels of $S$ is a dense linear order.
To verify Infinite Splitting, let $g,f_0,\dots,f_{n-1}$ be as in the hypothesis, so that $g = f_k \wedge_S f_m$ for $k<m<n$.  Let $\gamma = \dom g$.  We may select any $f_n\in S$ such that $f_n \restriction \gamma = g$ and $f_n(\gamma) \not= f_m(\gamma)$ for all $m<n$.  This completes the proof of Theorem \ref{pseudo}.

\begin{remark}
As pointed out by the referee, Theorem \ref{pseudo} has the following generalization.  If we construct the tree of models and embeddings in the proof of Theorem \ref{aronszajn} with the additional requirement that the critical points are always sent sufficiently high, as in the proof of Theorem \ref{R}, then we can take a Dedekind completion of our universal countable pseudotree and obtain an isomorphic honest subcategory of $\mathcal A_\delta$.  Such a pseudotree will be universal for the class of pseudotrees possessing a countable dense subset.
\end{remark}

\begin{theorem}
If there is a set of measurable cardinals of ordertype $\omega+1$, then there is a countable ordinal $\delta$ such that $\mathcal A_\delta$ contains an honest subcategory isomorphic to the reverse-ordered complete binary tree of height $\omega$.
\end{theorem}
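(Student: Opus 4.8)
The plan is to treat the reverse-ordered complete binary tree of height $\omega$ as the sequential tree $2^{<\omega}$ ordered by $p \le q \iff q \subseteq p$, whose root $\emptyset$ is the greatest element and whose branches are infinite descending chains, and to realize it by the ``collapsing inward'' method of Theorem~\ref{wftrees}. The one feature that prevents a direct appeal to that theorem is that $2^{<\omega}$ is \emph{not} well-founded as a sequential tree, so there is no rank on which to induct; its levels are, however, indexed by $\omega$, and this is the induction I would use instead. No direct limits are needed, since $2^{<\omega}$ has no nodes at level $\omega$, so the infinite descending chains are simply chains of embeddings with no limit object attached.

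First I would pass to a countable transitive model. From a set of measurables of ordertype $\omega+1$ in $V$, a L\"owenheim--Skolem argument followed by transitive collapse produces a countable transitive $W \models \ZFC$ containing measurables $\la \kappa_n : n \le \omega \ra$ of ordertype $\omega+1$. Put $\delta = \kappa_\omega$; since $\kappa_\omega$ is measurable it is completely J\'onsson and inaccessible, so $(V_\delta)^W$ is a model of $\ZFC$, and $\la \kappa_n : n < \omega \ra$ witnesses that $\delta$ has infinitely many measurables below it. This is exactly the setting of Theorem~\ref{wftrees}, and $\delta$ is countable as required. Fix an injection $f : \omega \times 2 \to \omega$ (say $f(i,j)=2i+j$), whose role is to assign to each pair consisting of a coordinate $i$ and a branch direction $j$ a distinct measurable $\kappa_{f(i,j)}$ below $\delta$.

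Next I would build, by recursion on the level $n<\omega$, a family $\la \mathfrak A_p : p \in 2^{<\omega}\ra$ of elementary substructures of $(H_\theta^W,\in,\lhd)$, with $\theta>2^\delta$ and $\lhd$ a well-order so that there are definable Skolem functions, decreasing along branches, with $\mathfrak A_\emptyset$ of size $\delta$ keeping every measurable full. Given $\mathfrak A_p$ at level $n$, I would use the stationarity fact recorded just before this subsection --- that for an increasing sequence of measurables and a prescribed nondecreasing sequence of target sizes, the set of submodels realizing those sizes is stationary --- to choose, inside $\mathfrak A_p$, two substructures $\mathfrak A_{p^\frown 0}, \mathfrak A_{p^\frown 1} \subseteq \mathfrak A_p$ such that $\mathfrak A_{p^\frown j}$ has $|\mathfrak A_{p^\frown j}\cap \kappa_{f(n,j)}| < \kappa_{f(n,j)}$, while keeping $|\mathfrak A_{p^\frown j}\cap\kappa_m| = \kappa_m$ for every other measurable $m$, together with $|\mathfrak A_{p^\frown j}\cap\delta| = \delta$ and $\delta \in \mathfrak A_{p^\frown j}$. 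Letting $M_p = (V_\delta)^{H_p}$ where $H_p$ is the transitive collapse of $\mathfrak A_p$, an easy induction using the indiscernibility analysis at the start of Section~\ref{fixedpts} gives that $\kappa^{M_p}_{f(i,p(i))} < \kappa^W_{f(i,p(i))}$ for $i<\len(p)$ while $\kappa^{M_p}_m = \kappa^W_m$ for $m \notin \{f(i,p(i)) : i < \len(p)\}$. Each inclusion $\mathfrak A_q \subseteq \mathfrak A_p$ for $p \subseteq q$ descends through the collapses to an elementary embedding $j_{q,p} : M_q \to M_p$, and these commute because the inclusions do; as in Theorem~\ref{wftrees} they are amenable, since each $M_p$ is a transitive set of height $\delta$. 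Thus $\{M_p\}$ together with the $j_{q,p}$ is a subcategory of $\mathcal A_\delta$ in which there is an arrow $M_q \to M_p$ exactly when $p \subseteq q$, i.e. a copy of the reverse-ordered complete binary tree of height $\omega$.

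Finally, honesty is where the injection $f$ does its work, and this is the step I expect to need the most care, although the idea is just the monotonicity argument of Theorem~\ref{wftrees}. If $p,q$ are incomparable, let $i$ be least with $p(i)\neq q(i)$ and set $n = f(i,q(i))$; then $M_p$ keeps $\kappa_n$ full while $M_q$ collapses it, so any elementary $j : M_p \to M_q$ would give $j(\kappa^{M_p}_n) = \kappa^{M_q}_n < \kappa^W_n = \kappa^{M_p}_n$, contradicting that embeddings of transitive models satisfy $j(\alpha)\ge\alpha$; the symmetric choice $n = f(i,p(i))$ rules out an arrow the other way. Hence no $M_p, M_q$ with $p \perp q$ are connected by any arrow of $\mathcal E_\delta$, so the subcategory is honest. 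The main obstacle throughout is the simultaneous bookkeeping of intersection sizes along the entire decreasing tree of submodels --- keeping the ``used'' measurable shrunk and all others full at every node, uniformly across all $2^n$ nodes of each level --- and it is precisely the need to devote a fresh measurable to each of the $\omega$ coordinates that forces the full $\omega$ measurables below $\delta$, and hence the ordertype-$(\omega+1)$ hypothesis.
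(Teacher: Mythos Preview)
Your approach has a genuine gap in the amenability argument. You write that the embeddings $j_{q,p}:M_q\to M_p$ are amenable ``as in Theorem~\ref{wftrees}\ldots since each $M_p$ is a transitive set of height $\delta$,'' but that is not how amenability is obtained there. In the proof of Theorem~\ref{wftrees}, at each stage one works inside an ambient model $M$ (or $M_n$) satisfying ``$V_{\delta+\alpha+1}$ exists''; the target of the embedding is the full $V_\delta^{M}$, and the embedding together with its source are \emph{elements of} $M$. Thus $j[x]\in M$ is automatic, and since $\rank(j[x])<\delta$ one gets $j[x]\in V_\delta^{M}$. The passage to the next level replaces $M$ by the collapse $M_n$ of a hull of $H_\theta^{M}$ with $\theta=|V_{\delta+\alpha}|^+$, and $M_n$ only satisfies ``$V_{\delta+\alpha_n+1}$ exists'' for the strictly smaller rank $\alpha_n$. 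This is precisely why the induction is on the rank of the tree and why $2^{<\omega}$, having no rank, cannot be handled directly.

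In your construction the targets $M_p$ for $|p|\ge 1$ are proper transitive subsets of $V_\delta^W$, not the full $V_\delta$ of any model that contains $\frak A_{p^\smallfrown j}$ as an element. Concretely, for $x\in M_{p^\smallfrown j}$ one computes $j_{p^\smallfrown j,p}[x]=\pi_p\bigl(\pi_{p^\smallfrown j}^{-1}(x)\cap \frak A_{p^\smallfrown j}\bigr)$, and for this to lie in $M_p=(V_\delta)^{H_p}$ one needs $\frak A_{p^\smallfrown j}\cap V_\alpha^W\in \frak A_p$ for each $\alpha<\delta$. Nothing in a bare choice of $\frak A_{p^\smallfrown j}\prec \frak A_p$ by stationarity guarantees this. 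The natural fix---choose $\frak A_{p^\smallfrown j}$ by working in the collapse $H_p$, so the hull is an element there---forces $H_p$ to satisfy ``$V_{\delta+1}$ exists,'' and then $H_{p^\smallfrown j}$ loses one level of $V$ above $\delta$; after finitely many steps you run out, exactly the obstruction the rank $\alpha$ records.

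The paper's proof overcomes this by passing to a countable ill-founded $N$ and \emph{iterating the top measure} on $\kappa_\omega$ for $\omega_1^N$ steps, producing a transitive (in $N$'s sense) model $M^*$ that contains an $N$-ordinal $\zeta$ which is really ill-founded. One then applies Theorem~\ref{wftrees} \emph{inside $N$} to a sequential tree $N$ believes has rank $\zeta$; the honesty is absolute, and a short argument shows such a tree contains a copy of $2^{<\omega}$. So the $(\omega+1)$-st measurable is not used merely to obtain a completely J\'onsson $\delta$ with $\omega$ measurables below---that would already follow from much less---but to supply an iterable measure whose iteration of ill-founded length manufactures the ill-founded rank needed to run the well-founded-tree machinery on an ill-founded tree.
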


\begin{proof}
Let $\kappa$ be the $\omega+1^{st}$ measurable cardinal, with normal measure $\mathcal U$.  Let $M$ be a countable transitive set such that there is an elementary embedding $\sigma : M \to (H_\theta,\in,\mathcal U)$, for $\theta>2^\kappa$ regular.  Let $\kappa^*,\mathcal U^* \in M$ be such that $\sigma(\kappa^*) = \kappa$ and $\sigma(\mathcal U^*)=\mathcal U$.  Then $(M,\mathcal U^*)$ is iterable.  (See, for example, Lemma 19.11 of \cite{MR1994835}.)

Let $r_M$ be a real coding $(M,\mathcal U^*)$, and let $N$ be a countable model of $\ZFC - $ Powerset with standard $\omega$ and ill-founded $\omega_1$, such that $r_M \in N$, and satisfying that $(M,\mathcal U^*)$ is iterable.  Let $\delta < \kappa^*$ be completely J\'onsson in $M$ and above the first $\omega$ measurables of $M$.  Working in $N$, iterate $(M,\mathcal U^*)$ $\omega_1^N$-many times, and let $j : M \to M_{\omega_1^N}$ be the iterated ultrapower map.  Then $j$ is the identity below $\kappa^*$, and $j(\kappa^*) = \omega_1^N$.  Then take in $N$ an elementary substructure $M^*$ of $(V_{j(\kappa^*)})^{M_{\omega_1^N}}$ that $N$ thinks is countable and transitive, containing some $N$-ordinal $\zeta$ which is really ill-founded.

Now we apply Theorem \ref{wftrees} in $N$.  Let $T \in N$ be a sequential tree such that $N$ thinks $T$ is well-founded and of rank $\zeta$.  In $N$, there is an honest subcategory of $\mathcal A_\delta$ isomorphic to $T$.  The honesty is absolute to our universe, since the incomparability is just witnessed by inequalities among the ordertypes of the first $\omega$ measurable cardinals of the various models corresponding to the nodes of $T$.

To complete the argument, we show that in our universe, there is a subtree of $T$ isomorphic to the complete binary tree of height $\omega$.  It suffices to show that if $S \in N$ is a sequential tree such that $N \models \rho_S(\emptyset) = \zeta$, where $\zeta$ is really ill-founded, then there are incomparable nodes $p,q \in S$ such that $\rho_S(p)$ and $\rho_S(q)$ are both ill-founded ordinals, for then the conclusion follows by a simple induction.  Now since $N \models \rho_S(t) = \sup_{s < t}(\rho_S(s) + 1)$ for all $t \in S$, there is $p \in S$ of rank $i(\zeta)$.  Since $i(\zeta)$ is a limit ordinal, $p$ must have infinitely many nodes immediately below it, call them $\{ q_i : i <\omega \}$, and $N \models \rho_S(p) = \sup_i(\rho_S(q_i) + 1)$.  Thus there are $n<m$ such that $\rho_S(q_n)$ and $\rho_S(q_m)$ are both ill-founded, as there is no least ill-founded ordinal.
\end{proof}

\section{Questions}

In an earlier version of this manuscript, we asked whether there can be two models of \ZFC with the same ordinals such that each is elementarily embeddable into the other.  This was answered in \cite{2021arXiv210812355E}.

If $M$ is a self-embeddable inner model, then in light of Theorems \ref{kunengen} and \ref{agreement}, it is interesting to ask how close $M$ can be to $V$.  The second author \cite{capture} showed under some mild large cardinal assumptions that $V$ can be a class-generic forcing extension of such an $M$.  Some related questions include:
\begin{question}
Suppose $M$ is a transitive proper class definable from parameters.
\begin{enumerate}
\item Can $M$ be self-embeddable and correct about cardinals?
\item Can $M$ be self-embeddable and correct about cofinalities?
\item Can $M$ be embeddable into $V$ and be correct about either cardinals or cofinality $\omega$?
\end{enumerate}
\end{question}

Our direct-limit constructions in Section \ref{amcat} were particular to countable models.  Thus it is natural to ask:
\begin{question}
Is it consistent that there is an ordinal $\delta$ of uncountable cofinality such that $\mathcal A_\delta$ contains a copy of the linear order $\omega+1$?
\end{question}

Although we ruled out the possibility of a Suslin tree being isomorphic to a subcategory of $\mathcal A_\delta$ for a countable $\delta$, we don't know much else about the combinatorial properties of the Aronszajn tree constructed in Theorem \ref{aronszajn}. 
\begin{question}
Characterize the class of Aronszajn trees that can be isomorphic to an honest subcategory of $\mathcal A_\delta$ for a countable $\delta$.  Can it contain non-special trees?
\end{question}

Our methods for building upward-growing and downward-growing trees as honest subcategories of $\mathcal A_\delta$ are quite different, and we don't know if they can be combined in any interesting way.  We would like to know if there is any structural asymmetry in this category, or rather:

\begin{question}Can an $\mathcal A_\delta$ contain an honest subcategory isomorphic to a ``diamond'', i.e. the standard partial order on the four-element boolean algebra?
\end{question}

\begin{question}
If $\mathbb P$ is a partial order isomorphic to an honest subcategory of $\mathcal A_\delta$, does this hold for the reverse of $\mathbb P$?
\end{question}

\bibliographystyle{amsplain.bst}
\bibliography{MRbib.bib}

\end{document}